\documentclass[10pt, a4paper]{article}
\usepackage[lmargin=1in,rmargin=1in,bottom=1in,top=1in]{geometry}
\usepackage[english]{babel}
\usepackage[T1]{fontenc}
\usepackage[utf8]{inputenc}
\usepackage[square,numbers]{natbib}
\bibliographystyle{abbrvnat}
\pagestyle{plain}
\usepackage{doi}
\usepackage{preamble}

\providecommand{\keywords}[1]
{
  \small	
  \textbf{{Key words: }} #1
}
\providecommand{\msccodes}[1]
{
  \small	
  \textbf{{MSC codes: }} #1
}

\title{A theoretical study on the effect of mass lumping on the discrete frequencies in immersogeometric analysis}
\author{%
  Ivan Bioli\textsuperscript{a,b,c,1,}\thanks{Corresponding author. \textit{Email addresses}: \href{mailto:ivan.bioli@unipv.it}{ivan.bioli@unipv.it} (Ivan Bioli), \href{mailto:yannis.voet@epfl.ch}{yannis.voet@epfl.ch} (Yannis Voet).} \textsuperscript{,}\thanks{Authors in alphabetical order.}%
  \and Yannis Voet\textsuperscript{a,}\footnotemark[2]%
  }
\date{
    \footnotesize
    \textit{
    \textsuperscript{a} MNS, Institute of Mathematics, École polytechnique fédérale de Lausanne, Station 8, CH-1015 Lausanne, Switzerland\\
    \textsuperscript{b} Dipartimento di Matematica, Università degli Studi di Pavia, Via A. Ferrata 5, 27100 Pavia, Italy\\
    \textsuperscript{c} Dipartimento di Ingegneria Civile e Architettura, Università degli Studi di Pavia, Via A. Ferrata 3, 27100 Pavia, Italy\\
    }
}

\begin{document}
\maketitle
\begin{abstract}
  In structural dynamics, mass lumping techniques are commonly employed for improving the efficiency of  explicit time integration schemes and increasing their critical time step constrained by the largest discrete frequency of the system. For immersogeometric methods, Leidinger \cite{leidinger2020explicit} first showed in 2020 that for sufficiently smooth spline discretizations, the largest frequency was not affected by small trimmed elements if the mass matrix was lumped, a finding later supported by several independent numerical studies. This article provides a rigorous theoretical analysis aimed at unraveling this property. By combining linear algebra with functional analysis, we derive sharp analytical estimates capturing the behavior of the largest discrete frequency for lumped mass approximations and various trimming configurations. Additionally, we also provide estimates for the smallest discrete frequency, which has lately drawn closer scrutiny. Our estimates are then confirmed numerically for 1D and 2D problems.
\end{abstract}

\keywords{Trimming, Isogeometric analysis, Mass lumping, Spectral analysis}

\vspace{2ex}
\msccodes{65M60, 65M15, 65F15}


\sloppy
\section{Introduction}
Isogeometric analysis is a spline-based discretization technique for solving partial differential equations (PDEs) \cite{hughes2005isogeometric,cottrell2009isogeometric}. Conceived as a generalization of classical finite element methods (FEM), it relies on spline functions from Computer-Aided-Design (CAD), such as B-splines and Non-Uniform Rational B-splines (NURBS) for both representing the geometry and approximating the solution. Isogeometric analysis offers many distinctive advantages, including exact representation of common geometries, smooth solutions and better approximation properties \cite{bazilevs2006isogeometric,bressan2019approximation,sande2020explicit}. It has proven itself in many applications, including fluid-structure interaction \cite{hsu2015dynamic,nitti2020immersed}, biomedical applications \cite{morganti2015patient,lorenzo2019computer} and fracture mechanics \cite{borden2014higher} to name just a few. In particular, isogeometric analysis rapidly stood out in spectral analysis, as it became known for removing the so-called ``optical branches'' from the discrete spectrum characterizing classical finite element discretizations \cite{cottrell2006isogeometric,cottrell2007studies,hughes2014finite}. Although isogeometric analysis allows approximating a large fraction of the eigenvalues, a few highly inaccurate discrete eigenvalues persist and were called \emph{outlier} eigenvalues for this reason \cite{cottrell2006isogeometric}. Outlier eigenvalues are especially a hurdle for explicit time integration schemes, whose critical time step is typically inversely proportional to the largest discrete frequency. For instance, the critical time step of the undamped central difference method is given by
\begin{equation}
    \label{eq:CFL_central_difference}
    \Delta t_c= \frac{2}{\sqrt{\lambda_n}}
\end{equation}
where $\sqrt{\lambda_n}$ is the largest frequency of the discrete system \cite{hughes2012finite,bathe2006finite}. Since the early days of isogeometric analysis, removing the outliers from the discrete spectrum has been of great interest. Some of the outlier removal techniques proposed in the literature include nonlinear parametrizations \cite{cottrell2006isogeometric,chan2018multi}, strong \cite{hiemstra2021removal,manni2022application,manni2023outlier} or weak \cite{deng2021boundary} imposition of additional boundary conditions (also related to $n$-width optimal spline spaces) and eigenvalue deflation techniques \cite{tkachuk2014local,gonzalez2020large,voet2025mass}.

In explicit dynamics, outliers are not the only concern. Due to the inherent cost of solving linear systems with the mass matrix ``exactly'' \cite{collier2012cost,collier2013cost}, mass lumping techniques are often employed. Mass lumping consist in directly substituting the consistent mass matrix with an ad hoc approximation, generally diagonal. Mass lumping is strongly rooted in the engineering community and accurate techniques have been known ever since the 1970s for classical FEM \cite{fried1975finite,cohen1994higher}. Unfortunately, they explicitly rely on the interpolatory nature of the basis functions, a property that is generally lost in isogeometric analysis. In this context, standard mass lumping techniques such as the row-sum only deliver second order convergent eigenvalues \cite{cottrell2006isogeometric}, independently of the spline order. Although a general proof is still lacking, this property has been systematically observed, also for generalizations of the row-sum technique \cite{voet2023mathematical,voet2025mass}. Instead, high order mass lumping techniques either rely on (approximate) dual bases \cite{anitescu2019isogeometric,nguyen2023towards,hiemstra2025higher} or interpolatory spline bases \cite{li2022significance,li2024interpolatory}. However, none of these approaches are perfect: while the former is being held back by several technical difficulties, primarily related to the imposition of boundary conditions, the latter leads to globally supported basis functions and dense stiffness matrices. Mass lumping techniques not only alleviate the burden of solving linear systems but are also often praised for increasing the critical time step, a property formally proved for the row-sum technique and nonnegative matrices \cite{voet2023mathematical}.

For immersogeometric analysis, this effect is sometimes extreme. Immersed methods consist in embedding a complex domain into a much simpler fictitious domain that is easily meshed. In isogeometric analysis, trimming curves (or surfaces) cut across the fictitious domain and delimit the physical domain. While trimming drastically simplifies mesh generation, it also introduces new difficulties, including integration on trimmed elements and boundaries, imposition of essential boundary conditions and severe stability and conditioning issues resulting from small trimmed elements \cite{de2017condition,de2019preconditioning,de2023stability}. Unless some form of stabilization is employed, the stable step size of explicit time integration methods with the consistent mass may become arbitrarily small \cite{de2023stability}. However, Leidinger \cite{leidinger2020explicit} first showed that for sufficiently smooth spline discretizations, mass lumping removed the dependency of the critical time step on the size of the trimmed element. This intriguing finding was later confirmed by several independent numerical studies in \cite{messmer2021isogeometric,messmer2022efficient,coradello2021accurate,stoter2023critical,radtke2024analysis}. Yet, only few authors have explored this phenomenon from a theoretical perspective.
Leidinger himself first studied the differences between $C^0$ and maximally smooth discretizations of degree $1$, $2$ and $3$ by explicitly computing some of the entries of the system matrices for a simple 1D problem. He concluded that maximally smooth discretizations of degree $p \geq 2$ guaranteed bounded eigenvalues.

More recently, Stoter et al. \cite{stoter2023critical} thoroughly analyzed the behavior of the largest eigenvalue by tracking down the dependency on the degree, dimension, boundary conditions and trimming configuration but mostly focused on the maximally smooth case. Their analysis consists in evaluating the Rayleigh quotient for clever guesses of the largest eigenfunction, thereby producing estimates of the largest eigenvalue that are later verified numerically. However, these estimates are only lower bounds. Neither can they guarantee that the largest eigenvalue remains bounded nor necessarily predict how fast it blows up. This article extends the previous study by providing an upper bound. Using sharp eigenvalue bounds, we derive theoretical estimates that capture the behavior of the largest discrete eigenvalue and support previous numerical findings. As we will show, this eigenvalue may exhibit distinctive behaviors that critically depend on the smoothness and trimming configuration (in higher dimensions). In addition to natural generalizations of 1D trimming configurations to single directions (such as ``corner cuts'' and ``sliver cuts'' in \cite{stoter2023critical}), our analysis has uncovered a new trimming configuration specific to 2D geometries and yielding novel insights on the behavior of the largest eigenvalue. We provide a separate study for this case, thereby complementing existing literature. Moreover, contrary to most existing work, our analysis seamlessly covers for all intermediate continuities ranging from $C^0$ to maximal smoothness.

Furthermore, the effect of mass lumping on the largest discrete eigenvalue might only be the tip of the iceberg. In the meantime, some of the more subtle intertwining of mass lumping and trimming has emerged and it appears that instead the smallest eigenvalue may converge to zero, thereby introducing spurious eigenvalues and modes in the low-frequency spectrum \cite{coradello2021accurate,radtke2024analysis}. Contrary to common belief, small spurious eigenvalues might have deleterious consequences for transient problems \cite{voet2025mass}.
Thus, we complement our findings with theoretical estimates for the smallest discrete eigenvalue. Our estimates confirm that, if the degree is large enough, this eigenvalue converges to zero.

The outline for the rest of the article is as follows: after recalling the context and standard notations in \Cref{se:model_problem}, we first derive sharp algebraic eigenvalue bounds in \Cref{se:eig_bounds}. These bounds later serve to deduce analytical estimates in \Cref{se:1D_trimming} capturing the eigenvalues' asymptotic behavior for the 1D Laplace problem. Our estimates are later extended to 2D in \Cref{se:2D_trimming} by considering various trimming configurations. Clues for further extending them to the bi-Laplace problem are also provided. Our estimates are confirmed by a series of numerical experiments for 1D and 2D problems. Finally, \Cref{se:conclusion} summarizes our findings and draws conclusions.

\section{Model problem and discretization}
\label{se:model_problem}
We consider as model problem the classical wave equation. Let $\Omega \subset \mathbb{R}^d$ be an open, connected physical domain with Lipschitz boundary $\partial \Omega = \overline{\partial \Omega_D \cup \partial \Omega_N}$ where $\partial \Omega_D \cap \partial \Omega_N = \emptyset$ and $\partial \Omega_D \neq \emptyset$. The physical domain $\Omega \subset \Omega_0$ is embedded in a fictitious domain $\Omega_0$ (also sometimes called extended domain or ambient domain). Let $[0,T]$ be the time domain with $T>0$ denoting the final time. We look for $u \colon \Omega \times [0,T] \to \mathbb{R}$ such that
\begin{align}
    \partial_{tt} u(\bm{x},t)-\Delta u(\bm{x},t) & =f(\bm{x},t) &  & \text{ in } \Omega \times (0,T], \label{eq:wave_equation}           \\
    u(\bm{x},t)                                  & =0           &  & \text{ on } \partial \Omega_D \times (0,T], \label{eq:dirichlet_bc} \\
    \partial_n u(\bm{x},t)                       & =h(\bm{x},t) &  & \text{ on } \partial \Omega_N \times (0,T], \label{eq:neumann_bc}   \\
    u(\bm{x},0)                                  & =u_0(\bm{x}) &  & \text{ in } \Omega,  \label{eq:initial_disp}                        \\
    \partial_t u(\bm{x},0)                       & =v_0(\bm{x}) &  & \text{ in } \Omega. \label{eq:initial_vel}
\end{align}
Equation \eqref{eq:wave_equation} describes the wave propagation in a medium and is complemented with Dirichlet \eqref{eq:dirichlet_bc} and Neumann \eqref{eq:neumann_bc} boundary conditions. Equations \eqref{eq:initial_disp} and \eqref{eq:initial_vel} impose the initial conditions. For the sake of the presentation, we prescribe homogeneous Dirichlet boundary conditions and assume that $\partial \Omega_D \subset \partial \Omega_0$. Our results, however, also apply to more general cases where such boundary conditions are imposed weakly on the trimmed boundary. Denoting
\begin{equation*}
    V = \{v \in \HoneO \colon v|_{\partial \Omega_D} = 0\}
\end{equation*}
and defining the Bochner space
\begin{equation*}
    V_T = \{v \in \Ltwo((0,T); V) \colon \ \partial_t v \in \Ltwo((0,T); \LtwoO), \ \partial_{tt} v \in \Ltwo((0,T); V^*) \},
\end{equation*}
the weak form of the problem reads: find $u \in V_T$ such that for almost every time $t \in (0,T)$
\begin{align*}
    \langle\partial_{tt} u(t), v\rangle + (\nabla u(t), \nabla v)_{\LtwoO^d} & = (f(t),v)_{\LtwoO} + (h(t),v)_{\Ltwo(\partial \Omega_N)} & \forall v \in V, \label{eq:continuous_weak_form} \\
    (u(0),v)_{\LtwoO}                                                        & = (u_0,v)_{\LtwoO}                                        & \forall v \in V, \nonumber                       \\
    \langle \partial_t u(0),v \rangle                                        & = \langle v_0,v \rangle                                   & \forall v \in V, \nonumber
\end{align*}
where we assume that $f \in \Ltwo((0,T); \LtwoO)$, $h \in \Ltwo((0,T); \Ltwo(\partial \Omega_N))$, $u_0 \in \LtwoO$, $v_0 \in V^*$ and $\langle .,.\rangle$ denotes the duality pairing between $V^*$ and $V$ (see e.g. \cite{evans2022partial}). The Galerkin method then seeks an approximate solution $u_h(.,t)$ in a finite dimensional subspace $V_h \subset V$. In isogeometric analysis, $V_h$ is chosen to be a spline space; i.e. a space of piecewise polynomials. For trimmed geometries, a basis for such a space is obtained by first constructing a tensor product B-spline basis $\widehat{\Phi}=\{\varphi_i\}_{i=1}^{\hat{n}}$ over a background mesh $\widehat{\mathcal{T}}_h$ and then retaining all basis functions whose support intersects the physical domain $\Omega$. In 1D, the B-spline basis functions are constructed recursively from a sequence of non-decreasing real numbers $\Xi:=(\xi_1,\dots,\xi_{\hat{n}+p+1})$, called a knot vector \cite{hughes2005isogeometric}. The integers $p$ and $\hat{n}$ denote the spline degree and spline space dimension, respectively. In practice, so-called open knot vectors are commonly used, where the first and last knot is repeated $p+1$ times. Varying the multiplicity $1 \leq m \leq p$ of internal knots allows constructing $C^k$ continuous spline spaces, denoted $\mathcal{S}^{k}_{p,\Xi}$, where $k=p-m$. The $i$th B-spline only depends on the knots $\xi_i, \xi_{i+1}, \dots, \xi_{i+p+1}$ and its regularity is recalled in the next theorem.

\begin{theorem}[{\cite[Theorem~3.19]{lyche2018spline}}]
    \label{thm:spline_continuity}
    Suppose that the knot $\xi$ occurs $m$ times among the knots $\xi_i, \xi_{i+1}, \dots, \xi_{i+p+1}$, defining the B-spline $\varphi_i$. Then, if $1 \leq m \leq p$,
    \begin{equation*}
        \varphi_i \in C^{p-m}(\xi) \setminus C^{p+1-m}(\xi).
    \end{equation*}
\end{theorem}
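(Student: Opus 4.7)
The plan is to proceed by induction on the polynomial degree $p$, using the Cox--de Boor recursion
\[
\varphi_i^p(x) = \frac{x-\xi_i}{\xi_{i+p}-\xi_i}\,\varphi_i^{p-1}(x) + \frac{\xi_{i+p+1}-x}{\xi_{i+p+1}-\xi_{i+1}}\,\varphi_{i+1}^{p-1}(x),
\]
with the usual convention that any term with vanishing denominator is discarded. For the base case $p=1$ the only admissible multiplicity is $m=1$, and the claim $\varphi_i^1\in C^0(\xi)\setminus C^1(\xi)$ follows immediately from the explicit piecewise linear (hat-function) form of $\varphi_i^1$, which is continuous but has mismatched one-sided slopes at every knot in its support.

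For the inductive step, I would let $m_1$ and $m_2$ denote the multiplicity of $\xi$ in the two shorter subsequences $\{\xi_i,\dots,\xi_{i+p}\}$ and $\{\xi_{i+1},\dots,\xi_{i+p+1}\}$ that define the two child B-splines. Since these subsequences overlap in $p$ entries, exactly one of three cases occurs: (a) $\xi\notin\{\xi_i,\xi_{i+p+1}\}$ with $m_1=m_2=m$; (b) $\xi=\xi_i$ with $m_1=m$, $m_2=m-1$; or (c) $\xi=\xi_{i+p+1}$ with $m_1=m-1$, $m_2=m$. In cases (b) and (c) the linear prefactor multiplying the less regular child vanishes at $\xi$, which lifts the regularity of that product by exactly one order and, combined with the inductive bound on the other term, yields precisely $C^{p-m}(\xi)\setminus C^{p+1-m}(\xi)$.

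The main obstacle is case (a): both children lie in $C^{p-1-m}(\xi)\setminus C^{p-m}(\xi)$ by induction, so the extra order of smoothness of their sum must come from a genuine cancellation of the one-sided $(p-m)$-th derivative jumps at $\xi$. I would establish this by differentiating the Cox--de Boor identity $p-m$ times, applying the classical derivative formula for B-splines to rewrite the jumps in terms of lower-degree B-splines, and invoking the inductive hypothesis to check that the two jumps are equal in magnitude and opposite in sign. A symmetric computation at order $p-m+1$ then shows the corresponding jump is nonzero, giving the sharp membership $\varphi_i^p\in C^{p-m}(\xi)\setminus C^{p+1-m}(\xi)$.

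A more uniform route that would bypass the case analysis is the Curry--Schoenberg representation $\varphi_i^p(x)=(\xi_{i+p+1}-\xi_i)\,[\xi_i,\dots,\xi_{i+p+1}]\,(\cdot-x)^p_+$. The truncated power $(t-x)^p_+$ is $C^{p-1}$ in $t$ with a single jump in its $p$-th derivative at $t=x$; writing the divided difference at a knot $\xi$ of multiplicity $m$ as a linear combination of derivatives up to order $m-1$ at $\xi$ plus ordinary divided differences at the remaining distinct knots, one can read off the smoothness class $C^{p-m}(\xi)\setminus C^{p+1-m}(\xi)$ directly, without having to track any cancellation by hand.
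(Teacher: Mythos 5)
This theorem is imported verbatim from the cited reference (Lyche--M{\o}rken, Theorem~3.19); the paper gives no proof of it, so there is no in-paper argument to compare against. Your sketch follows the classical textbook route, and both of your proposed strategies (induction via the Cox--de Boor recursion, or the Curry--Schoenberg divided-difference representation of $\varphi_i^p$ in terms of truncated powers) can be made to work; the second is the cleanest way to get the sharpness statement $\varphi_i \notin C^{p+1-m}(\xi)$.

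Two concrete points in the inductive version need attention. First, the inductive hypothesis as you state it ($1 \leq m \leq \text{degree}$) does not cover the children when $m = p$: in case (a) both children of degree $p-1$ then see $\xi$ with multiplicity $p = (p-1)+1$, and in case (b) the first child does. A B-spline whose degree-plus-one knots all coincide at $\xi$ is \emph{discontinuous} there, so you must strengthen the induction to include full multiplicity with the convention $C^{-1}(\xi) = $ ``bounded with a jump''; otherwise the step fails exactly in the $C^0$ case that matters most for this paper. (A quadratic on knots $\{0,1,1,2\}$ illustrates this: both linear children jump at $1$, the jumps cancel, and the sum is $C^0$ but not $C^1$.) Second, the case-(a) cancellation of the $(p-m)$-th derivative jumps is the crux and is only asserted; rather than differentiating the product recursion $p-m$ times, the standard and much lighter route for the positive half is to induct directly on the derivative formula $D\varphi_i^p = p\bigl(\varphi_i^{p-1}/(\xi_{i+p}-\xi_i) - \varphi_{i+1}^{p-1}/(\xi_{i+p+1}-\xi_{i+1})\bigr)$: each child is $C^{p-1-m}(\xi)$ by induction, hence $D\varphi_i^p \in C^{p-1-m}(\xi)$ and $\varphi_i^p \in C^{p-m}(\xi)$, with no cancellation to track. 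With those repairs your plan is correct.
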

The basis construction is extended to dimension $d \geq 2$ by a tensor product argument and leads to a tensor product spline space $\mathcal{S}^{\bm{k}}_{\bm{p},\bm{\Xi}}$ where the dependency on the knot vectors $\Xi_1,\dots,\Xi_d$ is specified by $\bm{\Xi}$ and the degrees, space dimensions and continuities along each direction are collected in the vectors $\bm{p}=(p_1,p_2,\dots,p_d)$, $\hat{\bm{n}}=(\hat{n}_1,\hat{n}_2,\dots,\hat{n}_d)$ and $\bm{k}=(k_1,k_2,\dots,k_d)$, respectively. Tensor product basis functions are commonly labeled with multi-indices $\bm{i}=(i_1,i_2,\dots,i_d)$ and we allow a slight abuse of notation by identifying them with ``linear'' indices in the global numbering:
\begin{equation*}
    \varphi_i=\varphi_{\bm{i}}=\varphi_{1, i_1}\varphi_{2, i_2}\dots \varphi_{d, i_d},
\end{equation*}
where $\varphi_{l,j}$ denotes the $j$th function in the $l$th direction and $1 \leq i \leq \hat{n}:=\prod_{l=1}^d \hat{n}_l$ is a global index, which only depends on $\bm{i}$ and $\hat{\bm{n}}$. The so-called ``active mesh'' and ``active basis'' over the trimmed domain are then defined as
\begin{equation*}
    \mathcal{T}_h = \{ T \in \widehat{\mathcal{T}}_h \colon T \cap \Omega \neq \emptyset \} \subseteq \widehat{\mathcal{T}}_h,
\end{equation*}
and
\begin{equation*}
    \Phi = \{\varphi_i \in \widehat{\Phi} \colon \mathrm{supp}(\varphi_i) \cap \Omega \neq \emptyset \} \subseteq \widehat{\Phi}.
\end{equation*}
Let $I$ denote its index set, $V_h = \Span(\Phi)$ the resulting space and $n = \dim (V_h)$ its dimension. To simplify the presentation, we assume the basis functions are relabeled such that $I=\{1,2,\dots,n\}$. Denoting
\begin{equation*}
    V_{h,T} = \{v \in \Ltwo((0,T); V_h) \colon \ \partial_t v \in \Ltwo((0,T); V_h), \ \partial_{tt} v \in \Ltwo((0,T); V_h) \},
\end{equation*}
the Galerkin method seeks an approximate solution $u_h \in V_{h,T}$ such that for almost every time $t \in (0,T)$
\begin{align}
    b(\partial_{tt} u_h(t), v_h) + a(u_h(t),v_h) & = F(v_h)             & \forall v_h \in V_h, \label{eq:weak_form} \\
    (u_h(0),v_h)_{\LtwoO}                        & = (u_0,v_h)_{\LtwoO} & \forall v_h \in V_h, \nonumber            \\
    (\partial_t u_h(0),v_h)_{\LtwoO}             & = (v_0,v_h)_{\LtwoO} & \forall v_h \in V_h, \nonumber
\end{align}
where $a,b \colon V_h \times V_h \to \mathbb{R}$ are symmetric bilinear forms defined as
\begin{equation}
    \label{eq:global_contribution}
    a(u_h,v_h) = (\nabla u_h, \nabla v_h)_{\LtwoO^d}, \quad \text{and} \quad b(u_h,v_h) = (u_h,v_h)_{\LtwoO}
\end{equation}
and $F \colon V_h \to \mathbb{R}$ is the linear functional
\begin{equation*}
    F(v_h) = (f(t),v_h)_{\LtwoO} + (h(t),v_h)_{\Ltwo(\partial \Omega_N)}.
\end{equation*}
In this simplified presentation, the bilinear forms $a$ and $b$ induce the $\Hone$ seminorm and $\Ltwo$ norm as
\begin{equation}
    \label{eq:norms}
    \abs{u_h}_{\HoneO}^2 = a(u_h,u_h) \quad \text{and} \quad \norm{u_h}_{\LtwoO}^2 = b(u_h,u_h)
\end{equation}
but in practice, both might account for additional penalization terms \cite{stoter2023critical}. After expanding the approximate solution in the B-spline basis $\Phi$ and testing against every basis function in $\Phi$, equation \eqref{eq:weak_form} leads to a semi-discrete formulation, which is a system of ordinary differential equations \cite{hughes2012finite,quarteroni2009numerical}
\begin{align}
    \label{eq:semi_discrete_pb}
    \begin{split}
        M\ddot{\bm{u}}(t) + K\bm{u}(t) & = \bm{f}(t) \qquad \text{for } t \in [0,T], \\
        \bm{u}(0)                      & = \bm{u}_0,                                 \\
        \dot{\bm{u}}(0)                & = \bm{v}_0.
    \end{split}
\end{align}
where $\bm{u}(t)$ is the coefficient vector of $u_h(\bm{x},t)$ in the basis $\Phi$,
\begin{equation}
    \label{eq:stiffness_mass_matrix}
    K = (a(\varphi_i,\varphi_j))_{i,j=1}^n \quad \text{and} \quad M = (b(\varphi_i,\varphi_j))_{i,j=1}^n
\end{equation}
are the stiffness and mass matrices (i.e. the Gram matrices of $a(u_h,v_h)$ and $b(u_h,v_h)$ in the basis $\Phi$) and
\begin{equation*}
    \bm{f}(t) = (F(\varphi_i))_{i=1}^n.
\end{equation*}
From equations \eqref{eq:norms} and \eqref{eq:stiffness_mass_matrix}, it follows that the stiffness and mass matrices are symmetric and $M$ is always positive definite while $K$ is only positive definite if $\partial \Omega_D \neq \emptyset$ such that the $\Hone$ seminorm actually becomes a norm. Moreover, owing to the nonnegativity of the B-spline basis, $M$ is additionally nonnegative.

There exist scores of methods for integrating equation \eqref{eq:semi_discrete_pb} in time; see e.g. \cite{hughes2012finite,bathe2006finite}. In structural dynamics, explicit time integration schemes are by far the most popular for various reasons. Firstly, the step size is primarily constrained by the physics of the phenomenon that is simulated. Fast dynamic processes such as blast and impacts require exceedingly small step sizes. This restriction prevents using larger step sizes within unconditionally stable implicit methods, although stability alone would allow it. This holds especially true for nonlinear problems, where nonlinear system solvers might even fail to converge. Secondly, explicit methods offer the possibility of lumping the mass matrix, thereby saving up on both storage and floating point operations, which drastically enhances performance. As a matter of fact, commercial codes are often completely matrix-free as the stiffness matrix is never explicitly assembled. Despite significant advances in mass lumping techniques \cite{anitescu2019isogeometric,nguyen2023towards,voet2025mass}, only very few are applicable to immersed methods and even if they are, the lumped mass matrix may not be completely diagonal. Thus, in many instances the row-sum technique remains the method of choice, especially in isogeometric analysis where positive definite lumped mass matrices are guaranteed owing to the nonnegativity of the B-splines. The row-sum lumped mass matrix is defined algebraically as
\begin{equation} \label{eq:lumped_mass}
    \LM = \diag(d_1,\dots,d_n),
\end{equation}
where $d_i = \sum_{j=1}^n M_{ij}$. Finite element procedures usually ignore the contribution of Dirichlet boundary conditions when initially assembling all system matrices. We assume mass lumping also takes place at this stage and leverages the partition-of-unity property of the basis. Hence,
\begin{equation*}
    d_i = \sum_{j=1}^n M_{ij} = \sum_{j=1}^n b(\varphi_i, \varphi_j) = b(\varphi_i, 1) = \int_\Omega \varphi_i = \norm{\varphi_i}_{\LoneO},
\end{equation*}
where the last equality follows from the nonnegativity of the B-splines. Practically speaking, substituting $M$ with $\hat{M}$ consists in replacing the bilinear form $b(u_h,v_h)$ in \eqref{eq:weak_form} by the discrete one $\hat{b} \colon V_h \times V_h \to \mathbb{R}$ defined as
\begin{equation*}
    \hat{b}(u_h,v_h) = \bm{u}^\top \hat{M} \bm{v},
\end{equation*}
where $\bm{u}$ and $\bm{v}$ are the coefficient vectors of $u_h$ and $v_h$, respectively, in the B-spline basis $\Phi$. Hence, the lumped mass explicitly depends on the B-spline basis, which undoubtedly bears consequences for the analysis.

In the engineering literature, the mass matrix is always lumped and the notation adopted therein rarely distinguishes it from the consistent mass. However, the distinction is crucial within this article. When explicit methods are applied to equation \eqref{eq:semi_discrete_pb}, their critical time step is inversely proportional to $\sqrt{\lambda_n(K,M)}$, where $\lambda_n(K,M)$ denotes the largest generalized eigenvalue of $(K,M)$; see e.g. equation \eqref{eq:CFL_central_difference} for the central difference method in the undamped case. Substituting the consistent mass $M$ with the lumped mass $\LM$ alters the critical time step of the method and since $\lambda_n(K,\LM) \leq \lambda_n(K,M)$ \cite[Corollary 3.10]{voet2023mathematical}, the row-sum technique increases the critical time step. For trimmed geometries, this effect is significant: whereas $\lambda_n(K,M)$ becomes unbounded for arbitrarily small trimmed elements \cite{de2023stability}, several numerical studies have indicated that $\lambda_n(K,\LM)$ does not depend on the size of the trimmed element for sufficiently smooth isogeometric discretizations \cite{messmer2021isogeometric,messmer2022efficient,coradello2021accurate,stoter2023critical,radtke2024analysis}. However, these results have overshadowed another much more subtle effect: while the largest eigenvalue $\lambda_n(K,\LM)$ may appear unaffected, the smallest one $\lambda_1(K,\LM)$ typically tends to zero as the trimmed elements get smaller. This creates severe inaccuracies in the low-frequency spectrum and their consequences on the dynamics have been thoroughly investigated in separate work \cite{voet2025stabilization}. This article will exclusively focus on better understanding the behavior of the extreme eigenvalues of $(K,\LM)$, regardless of their impact on the quality of the solution.

\section{Eigenvalue bounds}
\label{se:eig_bounds}
In this section, we focus on deriving general bounds on the smallest and largest eigenvalues of $(K,\LM)$ and later specialize our results to the Laplace operator. Firstly, from the min-max characterization of eigenvalues \cite[Theorem 4.2.6]{horn2012matrix}, a trivial upper bound for the smallest eigenvalue is given by:
\begin{equation}
    \label{eq:upperbound_lambda1}
    \lambda_1(K, \LM) = \min_{\bm{x}\in\R^n} \frac{\bm{x}^\top K \bm{x}}{\bm{x}^\top \LM \bm{x}} \leq \min_{i} \frac{\bm{e}_i^\top K \bm{e}_i}{\bm{e}_i^\top \LM \bm{e}_i} = \min_{i} \frac{K_{ii}}{\LM_{ii}},
\end{equation}
where $\bm{e}_i$ is the $i$th vector of the canonical basis of $\mathbb{R}^n$. Similarly, a lower bound for the largest eigenvalue is:
\begin{equation}
    \label{eq:lowerbound_lambdan}
    \lambda_n(K, \LM) = \max_{\bm{x}\in\R^n} \frac{\bm{x}^\top K \bm{x}}{\bm{x}^\top \LM \bm{x}} \geq \max_{i} \frac{\bm{e}_i^\top K \bm{e}_i}{\bm{e}_i^\top \LM \bm{e}_i} = \max_{i} \frac{K_{ii}}{\LM_{ii}}.
\end{equation}
Moreover, since $\LM$ is diagonal, we derive the following upper bound:
\begin{equation}
    \label{eq:upperbound_lambdan_Bdiag_infnorm}
    \lambda_n(K, \LM) = \lambda_n(\LM^{-\frac{1}{2}} K \LM^{-\frac{1}{2}}) \leq \norm{\LM^{-\frac{1}{2}} K \LM^{-\frac{1}{2}}}_{\infty} = \max_{i} \sum_{j=1}^n \frac{\abs{K_{ij}}}{\sqrt{\LM_{ii}\LM_{jj}}}.
\end{equation}
Experimentally, equations \eqref{eq:lowerbound_lambdan} and \eqref{eq:upperbound_lambdan_Bdiag_infnorm} provided exceedingly sharp and computationally cheap bounds on $\lambda_n(K, \LM)$. The upper bound \eqref{eq:upperbound_lambdan_Bdiag_infnorm} is particularly useful given that it produces a sharp and conservative estimate for the stable step size. It appears that symmetrically scaling $K$ on both sides by $\LM^{-\frac{1}{2}}$ is critical for sharpness. Indeed, as noted in \cite{cocchetti2013selective}, only scaling on one side by $\LM^{-1}$ tends to produce a loose overestimate, which in our context did not even capture the behavior of $\lambda_n(K, \LM)$ in all cases.

Although computationally appealing, these bounds are not convenient for deriving theoretical estimates. The next lemma provides a working basis.

\begin{lemma}
    \label{lem:bounds_lambdan_lumped}
    The largest eigenvalue $\lambda_n(K, \LM)$ satisfies
    \begin{equation}
        \label{eq:bounds_lambdan_lumped}
        \max_{i} \frac{K_{ii}}{\LM_{ii}} \leq \lambda_n(K, \LM) \leq c(\bm{p}, d) \max_{i} \frac{K_{ii}}{\LM_{ii}}.
    \end{equation}
    where $c(\bm{p}, d) \leq \prod_{k=1}^d (2p_k+1)$ only depends on the dimension and spline degree.
\end{lemma}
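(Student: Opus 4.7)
The lower bound is immediate: it is exactly inequality \eqref{eq:lowerbound_lambdan} recorded a few lines above the statement, obtained by testing the Rayleigh quotient against canonical basis vectors. So the only real work is the upper bound, for which the plan is to start from inequality \eqref{eq:upperbound_lambdan_Bdiag_infnorm} and to majorize the resulting infinity norm entry by entry.

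The first key ingredient is a Cauchy--Schwarz estimate on the off-diagonal entries of $K$. Since $a(\cdot,\cdot)$ is a symmetric positive semidefinite bilinear form (it induces the $H^1$ seminorm), we have
\begin{equation*}
   |K_{ij}| = |a(\varphi_i,\varphi_j)| \le \sqrt{a(\varphi_i,\varphi_i)\,a(\varphi_j,\varphi_j)} = \sqrt{K_{ii} K_{jj}}.
\end{equation*}
Dividing by $\sqrt{\LM_{ii}\LM_{jj}}$ and using the elementary bound $\sqrt{(K_{ii}/\LM_{ii})(K_{jj}/\LM_{jj})} \le \max_k K_{kk}/\LM_{kk}$ yields, for each pair $(i,j)$,
\begin{equation*}
   \frac{|K_{ij}|}{\sqrt{\LM_{ii}\LM_{jj}}} \le \max_k \frac{K_{kk}}{\LM_{kk}}.
\end{equation*}

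The second key ingredient is a sparsity count. Because $K_{ij}=0$ whenever the supports of $\varphi_i$ and $\varphi_j$ are disjoint, and because of the tensor product structure of the B-spline basis, at most $2p_k+1$ basis functions in direction $k$ have support intersecting that of a fixed $\varphi_{i_k}$. Consequently each row of $K$ contains at most $\prod_{k=1}^d(2p_k+1)$ nonzero entries. Combining this with the pointwise bound above and plugging into \eqref{eq:upperbound_lambdan_Bdiag_infnorm} gives
\begin{equation*}
   \lambda_n(K,\LM) \le \max_i \sum_{j=1}^n \frac{|K_{ij}|}{\sqrt{\LM_{ii}\LM_{jj}}} \le \prod_{k=1}^d(2p_k+1)\,\max_i \frac{K_{ii}}{\LM_{ii}},
\end{equation*}
which is the desired upper bound with $c(\bm{p},d) = \prod_{k=1}^d(2p_k+1)$.

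There is no real obstacle in this argument; it is essentially two lines of matrix analysis plus the standard bandwidth count for tensor product B-splines. The only thing worth pointing out is that the trimming does not degrade the sparsity pattern (trimming only \emph{removes} basis functions from the active basis $\Phi$, so the bandwidth bound inherited from the full tensor product space still applies), and that the Cauchy--Schwarz step only needs positive semidefiniteness of $a(\cdot,\cdot)$, hence it works regardless of whether Dirichlet conditions render $K$ positive definite. If one wants to refine the constant $c(\bm{p},d)$, one could instead count, for each fixed $\varphi_i$, exactly how many basis functions have overlapping support modulo symmetry, but the bound $\prod_k(2p_k+1)$ is perfectly sufficient for the asymptotic estimates used later in the paper.
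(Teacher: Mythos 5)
Your proposal is correct and follows essentially the same route as the paper: both start from the scaled infinity-norm bound \eqref{eq:upperbound_lambdan_Bdiag_infnorm}, use $|K_{ij}|\le\sqrt{K_{ii}K_{jj}}$ (your Cauchy--Schwarz on $a(\cdot,\cdot)$ is equivalent to the paper's appeal to positive semidefiniteness of $K$), and conclude with the bandwidth count $c(\bm{p},d)\le\prod_{k=1}^d(2p_k+1)$ for the number of nonzeros per row. The only cosmetic difference is that you bound each summand individually before counting the nonzero terms, whereas the paper first pulls out the count and then bounds the resulting maximum; the content is identical.
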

\begin{proof}
    The result only requires reworking the upper bound \eqref{eq:upperbound_lambdan_Bdiag_infnorm}. Since the stiffness matrix $K$ is positive semidefinite, $|K_{ij}| \leq \sqrt{K_{ii}K_{jj}}$ (see e.g. \cite[Problem 7.1.P1]{horn2012matrix}) and consequently
    \begin{equation}
        \label{eq:upperbound_lambdan_lumped}
        \begin{split}
            \lambda_n(K, \LM) & \leq \max_{i} \sum_{j=1}^n \frac{\abs{K_{ij}}}{\sqrt{\LM_{ii}\LM_{jj}}} \leq c(\bm{p}, d) \max_{i,j} \frac{\abs{K_{ij}}}{\sqrt{\LM_{ii}\LM_{jj}}} \\ &\leq c(\bm{p}, d) \max_{i,j} \frac{\sqrt{K_{ii}K_{jj}}}{\sqrt{\LM_{ii}\LM_{jj}}} = c(\bm{p}, d) \max_{i} \frac{K_{ii}}{\LM_{ii}}
        \end{split}
    \end{equation}
    where $c(\bm{p}, d)$ denotes the maximum number of non-zero entries per row in $K$ and is at most $\prod_{k=1}^d (2p_k+1)$ for maximal regularity splines.
\end{proof}

\begin{rmk}
    Although we have the upper bound \eqref{eq:upperbound_lambda1}, it is generally not possible to derive a lower bound on $\lambda_1(K, \LM)$ of the form $\tilde{c}(\bm{p}, d) \, \min_{i} K_{ii} /\LM_{ii}$. Indeed, the matrix $K$ can be singular even if $K_{ii}\neq 0$ for all $i=1,\dots, n$. Thus, while the largest eigenvalue always behaves as the maximum ratio, the smallest eigenvalue may not behave as the minimum ratio. This key difference will later transpire in the experiments.
\end{rmk}

The bounds of \Cref{lem:bounds_lambdan_lumped} also hold in more general situations. As a matter of fact, the only requirements on $\LM$ and $K$ are that:
\begin{enumerate}[noitemsep]
    \item $\LM$ is diagonal and positive definite,
    \item $K$ is symmetric positive semidefinite and its maximum number of non-zero entries per row is bounded independently of the mesh size $h$.
\end{enumerate}

In practice, the bilinear form $a(u_h,v_h)$ and therefore the stiffness matrix $K$ often account for additional penalization terms originating from the weak imposition of Dirichlet boundary conditions \cite{stoter2023critical}. Fortunately, if the symmetric Nitsche method is chosen together with a suitable penalization parameter, the above requirements are met and \Cref{lem:bounds_lambdan_lumped} stills holds. However, adding a ghost penalization term to $\LM$, as originally envisioned in \cite{stoter2023critical}, ruins the diagonal structure of the lumped mass and violates the first requirement. Thus, our bounds are not applicable in this case. Formulating our bounds in terms of Rayleigh quotients is more convenient for deriving analytical estimates. The Rayleigh quotient $\hat{R} \colon V_h \setminus \{0\} \to \mathbb{R}^+$ is defined as
\begin{equation}
    \label{eq:rayleigh_quotient}
    \hat{R}(u_h) = \frac{a(u_h,u_h)}{\hat{b}(u_h,u_h)}.
\end{equation}

\begin{corollary}
    \label{corollary:eigen_laplace}
    Let $K$ and $\LM$ be the stiffness matrix and lumped mass matrix, respectively. The following bounds on the eigenvalues hold:
    \begin{align}
        \lambda_1(K, \LM)                                  & \leq \min_{i} \hat{R}(\varphi_i) \label{eq:bound_1_laplace},             \\
        \max_{i} \hat{R}(\varphi_i) \leq \lambda_n(K, \LM) & \leq c(\bm{p}, d) \max_{i}\hat{R}(\varphi_i) \label{eq:bound_n_laplace}.
    \end{align}
\end{corollary}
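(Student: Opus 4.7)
The plan is to observe that the bounds of \Cref{corollary:eigen_laplace} are simply a translation of the algebraic bounds \eqref{eq:upperbound_lambda1} and \eqref{eq:bounds_lambdan_lumped} into the language of the Rayleigh quotient \eqref{eq:rayleigh_quotient}, using the fact that evaluating $\hat{R}$ at a single basis function recovers the diagonal ratio $K_{ii}/\LM_{ii}$.

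More precisely, since $\bm{e}_i$ is the coefficient vector of $\varphi_i$ in the basis $\Phi$, the definition of the stiffness matrix in \eqref{eq:stiffness_mass_matrix} yields $a(\varphi_i,\varphi_i) = \bm{e}_i^\top K \bm{e}_i = K_{ii}$, and the definition of $\hat{b}$ in terms of the lumped mass gives $\hat{b}(\varphi_i,\varphi_i) = \bm{e}_i^\top \LM \bm{e}_i = \LM_{ii}$. Hence
\begin{equation*}
    \hat{R}(\varphi_i) = \frac{a(\varphi_i,\varphi_i)}{\hat{b}(\varphi_i,\varphi_i)} = \frac{K_{ii}}{\LM_{ii}}, \qquad i=1,\dots,n.
\end{equation*}
With this identity in hand, the upper bound \eqref{eq:bound_1_laplace} follows directly from \eqref{eq:upperbound_lambda1} by taking the minimum over $i$, and the two-sided estimate \eqref{eq:bound_n_laplace} follows from the bounds of \Cref{lem:bounds_lambdan_lumped} by taking the maximum over $i$ on both sides. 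Nothing else is required: the assumptions of \Cref{lem:bounds_lambdan_lumped} are met because $\LM$ is diagonal and positive definite (by nonnegativity of the B-splines and partition-of-unity), and $K$ is symmetric positive semidefinite with a uniformly bounded number of nonzero entries per row, inherited from the local support of the B-spline basis.

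There is no real obstacle here; the corollary is a restatement of the earlier algebraic results in variational form, which will be the convenient form for the asymptotic estimates carried out in the forthcoming sections. The only care needed is to make the identification between the canonical basis vector $\bm{e}_i$ and the basis function $\varphi_i$ explicit, so that the reader sees that no additional approximation or bound is hidden in the passage from matrices to Rayleigh quotients.
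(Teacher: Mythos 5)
Your proposal is correct and follows exactly the paper's own argument: the paper likewise combines \eqref{eq:upperbound_lambda1} and \eqref{eq:bounds_lambdan_lumped} with the identification $\hat{R}(\varphi_i) = K_{ii}/\LM_{ii}$ coming from the definition of the Rayleigh quotient. Your explicit verification of that identification via the canonical basis vectors is a harmless elaboration of what the paper leaves implicit.
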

\begin{proof}
    The proof immediately follows from combining equations \eqref{eq:upperbound_lambda1} and \eqref{eq:bounds_lambdan_lumped} with the definition of the Rayleigh quotient \eqref{eq:rayleigh_quotient}.
\end{proof}

\begin{rmk}
    From the construction of the lumped mass matrix, we had already anticipated a relation between spectral and basis properties and indeed, \Cref{corollary:eigen_laplace} shows that the maximum eigenvalue behaves as the maximum of all evaluations of the Rayleigh quotient \eqref{eq:rayleigh_quotient} at the basis functions. In other words, we have replaced the difficult maximization problem
    \begin{equation*}
        \max_{\substack{v_h \in V_h \\ v_h \neq 0}} \hat{R}(v_h) \quad \text{by} \quad \max_{i} \hat{R}(\varphi_i),
    \end{equation*}
    which is much simpler. For the smallest eigenvalue, however, the minimum of all evaluations only provides an upper bound, which may not be sharp. In contrast, \cite{stoter2023critical} evaluates the Rayleigh quotient \eqref{eq:rayleigh_quotient} for clever guesses of the eigenfunctions, yielding lower bounds on the largest eigenvalue, which was the only quantity of interest therein. These estimates are insufficient for guaranteeing that the largest eigenvalue remains bounded and are later complemented with numerical experiments. \Cref{corollary:eigen_laplace} is particularly advantageous, as it allows us to exclusively focus on individual basis functions. Thus, analytical estimates for the evaluation of the Rayleigh quotient at the basis functions will yield valuable insights on the behavior of the largest eigenvalue. This was already conjectured in \cite{stoter2023critical} for the so-called first and second ``corner-cut'' functions. Therefore, some of the estimates derived therein (that also account for weakly imposed Dirichlet boundary conditions) remain applicable in this work. Nevertheless, we must stress that \Cref{corollary:eigen_laplace} critically relies on the assumption that $\hat{M}$ is diagonal, and neither holds for the consistent mass $M$ nor penalized versions of the lumped mass as proposed in \cite{stoter2023critical}.
\end{rmk}

Although \Cref{corollary:eigen_laplace} allows restricting our attention to individual basis functions, evaluating the minimum and maximum still requires identifying a subset of the basis functions that could cause trouble in the event of small trimmed elements. For this purpose, we first introduce a partition of the mesh into large (or ``good'') and small (or ``bad'') elements as in \cite{buffa2020minimal,burman2023extension}. Given a parameter $\gamma \in [0,1]$, the set of large elements $\mathcal{T}^L_h$ is defined as
\begin{equation*}
    \mathcal{T}^L_h = \{T \in \mathcal{T}_h \colon |T \cap \Omega | \geq \gamma |T| \}.
\end{equation*}
Its complement $\mathcal{T}^S_h = \mathcal{T}_h \setminus \mathcal{T}^L_h$ is the set of small elements:
\begin{definition}
    \label{def:bad_element}
    An element $T \in \mathcal{T}_h$ is called large (or good) if $T \in \mathcal{T}^L_h$ and is called small (or bad) otherwise.
\end{definition}

The actual value of $\gamma$ partitioning the mesh in good and bad elements does not matter much in the context of this work. An element $T$ such that $|T \cap \Omega | \to 0$ will eventually be labeled as ``bad'', regardless of how small $\gamma$ is. This partition allows defining the set of ``large'' basis functions $\Phi^L$ as
\begin{equation*}
    \Phi^L = \bigcup_{T \in \mathcal{T}_h^L} \Phi_{T}.
\end{equation*}
where
\begin{equation*}
    \Phi_{T} = \{\varphi_i \in \Phi \colon T \subseteq \supp(\varphi_i) \}.
\end{equation*}
Its complement $\Phi^S = \Phi \setminus \Phi^L$ defines the set of ``small'' basis functions:
\begin{definition}
    \label{def:bad_basis_function}
    A basis function $\varphi \in \Phi$ is called large (or good) if $\varphi \in \Phi^L$ and is called small (or bad) otherwise.
\end{definition}
Let $V_h^L=\Span(\Phi^L)$ and $V_h^S=\Span(\Phi^S)$ denote the subspaces spanned by the large and small basis functions, respectively, such that
\begin{equation*}
    V_h = V_h^L \oplus V_h^S.
\end{equation*}
The support of large basis functions contains at least one large element. Conversely, the support of small basis functions only contains small elements. The active support of a basis function $\varphi_i \in \Phi$ is defined as
\begin{equation*}
    \supp_\Omega(\varphi_i) = \Omega \cap \supp(\varphi_i)
\end{equation*}
and measures how ``small'' this function really is. For a bad basis function $\varphi_i \in \Phi^S$, $s_i = |\supp_\Omega(\varphi_i)|$ may become arbitrarily small and since
\begin{equation*}
    \lim_{s_i \to 0} a(\varphi_i,\varphi_i) = \lim_{s_i \to 0} \hat{b}(\varphi_i,\varphi_i) = 0,
\end{equation*}
the Rayleigh quotient in equation \eqref{eq:rayleigh_quotient} may either blow up, converge to zero or take a finite positive value. The behavior of $a(\varphi_i,\varphi_i)$ and $\hat{b}(\varphi_i,\varphi_i)$ depends not only on $s_i$, but also on the spline degree, continuity, boundary conditions and the trimming configuration (in higher dimensions).

Tracking down all these dependencies is a daunting task and the most complete analysis we are aware of is reported in \cite{stoter2023critical}. The results therein account for the spline degree, dimension and boundary conditions for various trimming configurations and tackle both second and fourth order differential operators. However, the computations (whose details were omitted) merely consist in evaluating the Rayleigh quotient for specific choices of functions. Thus, the estimates are only lower bounds on the largest eigenvalue and do not necessarily capture its behavior. Additionally, critical trimming configurations might have been forgotten and the continuity is only addressed as a side note. Yet, the latter is central for unraveling the behavior of the largest eigenvalue. Thus, the next section proposes a more rigorous analysis, although in a simplified setting by only prescribing Neumann boundary conditions on the trimmed boundary such that
\begin{equation*}
    a(\varphi_i,\varphi_i)=\abs{\varphi_i}_{\HoneO}^2 \quad \text{and} \quad \hat{b}(\varphi_i,\varphi_i)=\norm{\varphi_i}_{\LoneO}.
\end{equation*}
This simplification reduces the number of terms and therefore the length of the computations. Nevertheless, our bounds are sometimes applicable to more general cases considered in \cite{stoter2023critical} and nicely complement their findings. We will now focus on studying the ratio
\begin{equation}
    \label{eq:ratio}
    \hat{R}(\varphi_i) = \frac{\abs{\varphi_i}_{\HoneO}^2}{\norm{\varphi_i}_{\LoneO}}
\end{equation}
for basis functions $\varphi_i \in \Phi^S$. In the sequel, $C$ will denote a constant that may take different values at different locations.

\section{Trimming in 1D}
\label{se:1D_trimming}
We consider the physical domain $\Omega$ embedded within the fictitious domain $\Omega_0 = (0, 1)$ whose right end is trimmed off between knots $\xi_l$ and $\xi_{l+1}$. Let $\Omega_\ell$ denote the trimmed element intersected with the physical domain whose measure is $|\Omega_\ell|=\delta$. The trimming process is illustrated in \Cref{fig:1D_trimming}, along with the notation used in the subsequent analysis.
Our focus is on understanding the effects of trimming on $\lambda_n(K,\LM)$ and $\lambda_1(K,\LM)$ and more specifically, whether $\lambda_n(K,\LM)$ diverges and/or $\lambda_1(K,\LM)$ approaches $0$, as the trimming parameter $\delta$ goes to $0$. Based on Corollary~\ref{corollary:eigen_laplace}, we shift our attention to the behavior of the quantities $\min_i \hat{R}(\varphi_i)$ and $\max_i \hat{R}(\varphi_i)$.

\begin{figure}[h!]
    \centering
    \includegraphics[width=0.5\textwidth]{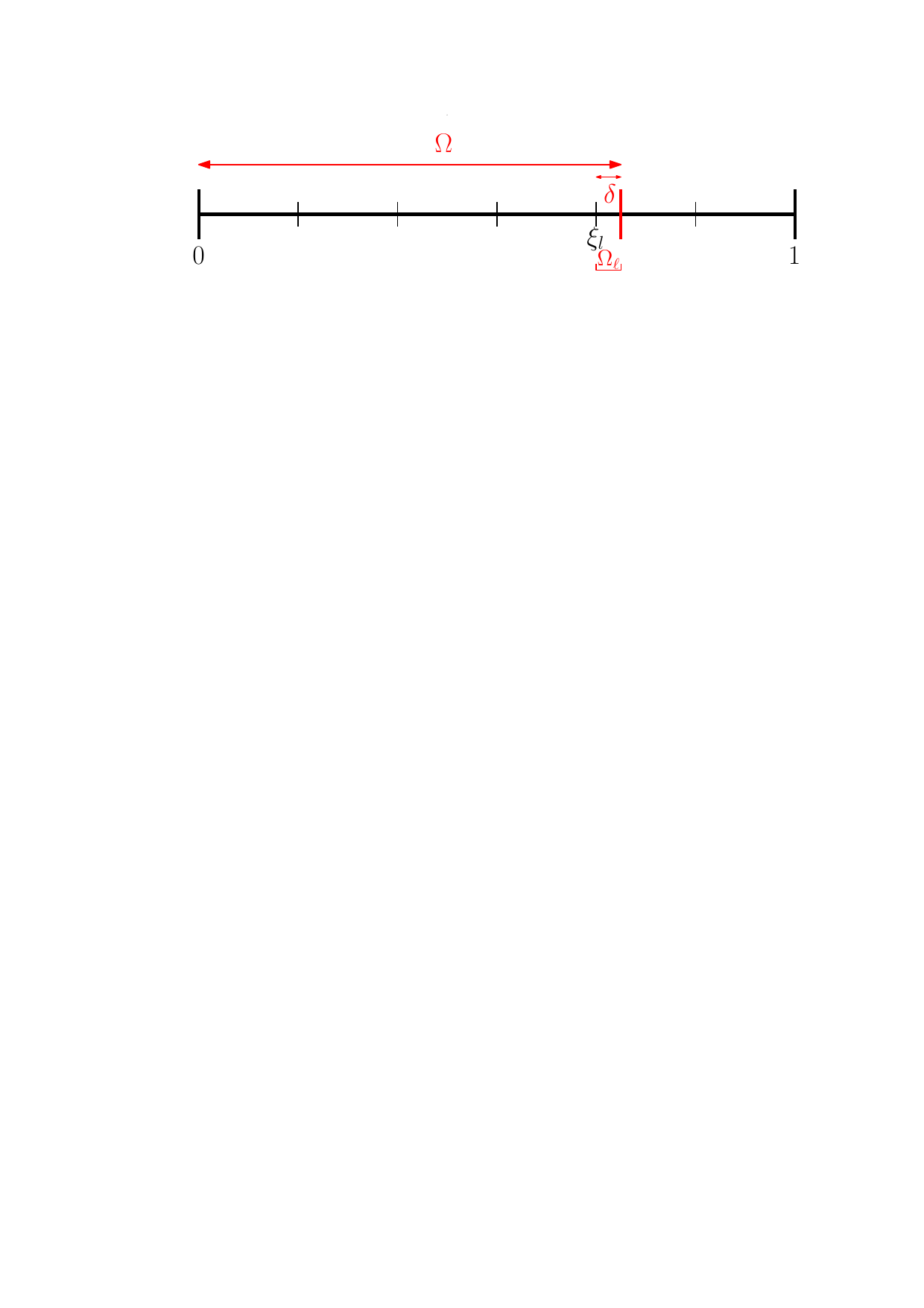}
    \caption{Visual representation of 1D trimming.}
    \label{fig:1D_trimming}
\end{figure}

As explained earlier, when $\delta\to 0$, the Rayleigh quotient \eqref{eq:ratio} can either blow up or tend to zero only if $\varphi_i \in \Phi^S$. In the 1D configuration depicted in \Cref{fig:1D_trimming}, $|\Phi^S|=m_l$, where $1 \leq m_l \leq p$ is the multiplicity of knot $\xi_l$, and $\supp_\Omega(\varphi_i)=\Omega_\ell \ \forall \varphi_i \in \Phi^S$. In the upcoming analysis, $k$ denotes the smoothness of $\varphi_i$, and might change from one function to another. Suppose $\varphi_i$ is $C^k$ across the knot $\xi_l$, but not $C^{k+1}$. Then $\restr{\varphi_i}{\Omega_\ell}$ is a polynomial of degree $p$ such that $\D^m_+\restr{\varphi_i}{\Omega_\ell}(\xi_l) = 0$ for all $m = 0, \dots, k$, and $\D^{k+1}_+\restr{\varphi_i}{\Omega{_\ell}}(\xi_l) \neq 0$. Denoting $\alpha_m = \frac{1}{m!}\D^m_+\varphi_i(\xi_l)$, we obtain
\[
    \restr{\varphi_i}{\Omega_\ell}(\xi) = \sum_{m=k+1}^{p} \alpha_m (\xi - \xi_l)^m.
\]
Consequently,
\[
    \begin{split}
        K_{ii} & = \int_\Omega \norm{\nabla \varphi_i}^2 = \int_{\Omega_\ell} \norm{\nabla \varphi_i}^2 = \int_{\xi_l}^{\xi_l+\delta} \left( \sum_{m=k+1}^{p} m\alpha_m (\xi - \xi_l)^{m-1} \right)^2 \dxi \\
               & = \int_{\xi_l}^{\xi_l+\delta} (\xi - \xi_l)^{2k}\left( \sum_{m=0}^{p-k-1} (k+1+m)\alpha_{k+1+m} (\xi - \xi_l)^{m} \right)^2 \dxi                                                          \\
               & = \int_{0}^{\delta} \xi^{2k}\left(C+ \calO(\xi)\right) \dxi \sim \delta^{2k+1} \qquad \text{as } \delta\to 0,
    \end{split}
\]

and similarly

\[
    \begin{split}
        \LM_{ii} & = \int_\Omega  \varphi_i = \int_{\Omega_\ell} \varphi_i = \int_{\xi_l}^{\xi_l+\delta}  \sum_{m=k+1}^{p} \alpha_m (\xi - \xi_l)^{m} \dxi \\
                 & = \int_{0}^{\delta} \xi^{k+1} \left(C+ \calO(\xi)\right) \dxi \sim \delta^{k+2} \qquad \text{as } \delta\to 0.
    \end{split}
\]
Therefore,
\[
    \frac{K_{ii}}{\LM_{ii}} \sim \frac{\delta^{2k+1}}{\delta^{k+2}} \sim \delta^{k-1} \qquad \text{as } \delta\to 0.
\]

By \Cref{thm:spline_continuity}, if the knot $\xi_l$ has multiplicity $m_l = p - \kappa_l$, there exists a spline $\varphi_i$ as above for each $k = \kappa_l, \dots, p-1$. The largest eigenvalue $\lambda_n(K, \LM)$ can blow up only if $k = 0$, thus
\begin{equation}
    \label{eq:lambdan_1d}
    \lambda_n(K, \LM) \sim
    \begin{cases*}
        \calO\left(\delta^{-1}\right) & \text{if  $\kappa_l = 0$} \\
        \calO\left( 1 \right)         & \text{otherwise}
    \end{cases*} \qquad \text{as } \delta\to 0.
\end{equation}
Conversely, the quickest decay to $0$ is obtained for $k = p-1$ (i.e. the spline of maximal regularity). Thus, we have the following upper bound on the smallest eigenvalue
\begin{equation}
    \label{eq:lambda1_1d}
    \lambda_1(K, \LM) \leq \min_i \frac{K_{ii}}{\LM_{ii}} =
    \begin{cases*}
        \calO\left(1\right)              & \text{if  $p \leq 2$} \\
        \calO\left(\delta^{p - 2}\right) & \text{otherwise}
    \end{cases*} \qquad \text{as } \delta\to 0.
\end{equation}
Although increasing the knot multiplicity $m_l$ decreases the global regularity of the spline space, it produces $m_l$ splines starting at $\xi_l$ of increasing regularity ranging from $p-m_l$ to $p-1$. The spline responsible for the quickest decay to $0$ is the one of maximal regularity, which always exists, independently of the global regularity of the space. \Cref{fig:1D_trimming_whichspline} confirms our findings by showing the B-splines attaining the maximum and the minimum. Remarkably, the behavior of $\lambda_n(K, \LM)$ depends solely on the local continuity of the spline basis across $\xi_l$, but not on the global continuity: it scales as $\calO\left(\delta^{-1}\right)$ for $C^0$ continuity and $\calO\left( 1 \right)$ otherwise. For the $C^0$ case, the maximum is attained for the spline of minimal regularity starting at $\xi_l$. This finding is also consistent with the explicit computations of Leidinger \cite[Section 4.3.1]{leidinger2020explicit}. Conversely, the upper bound on $\lambda_1(K, \LM)$ is attained for the spline of maximal regularity, which only depends on the degree of the splines and not on the (global) continuity.

\begin{figure}[h!]
    \centering
    \begin{subfigure}[b]{0.49\textwidth}
        \centering
        \includegraphics[width=\textwidth]{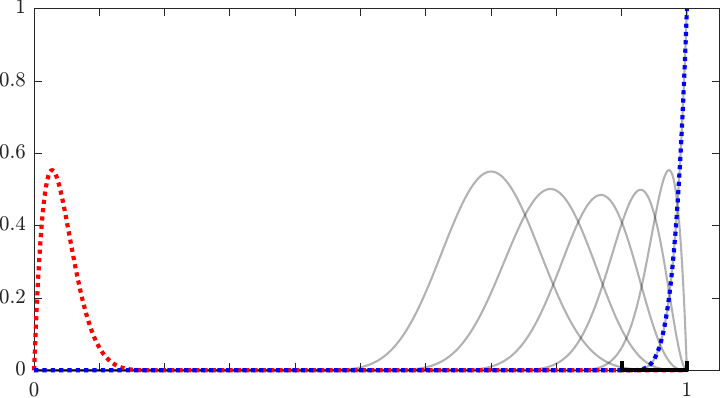}
        \caption{$C^{p-1}$ continuity.}
    \end{subfigure}
    \hfill
    \begin{subfigure}[b]{0.49\textwidth}
        \centering
        \includegraphics[width=\textwidth]{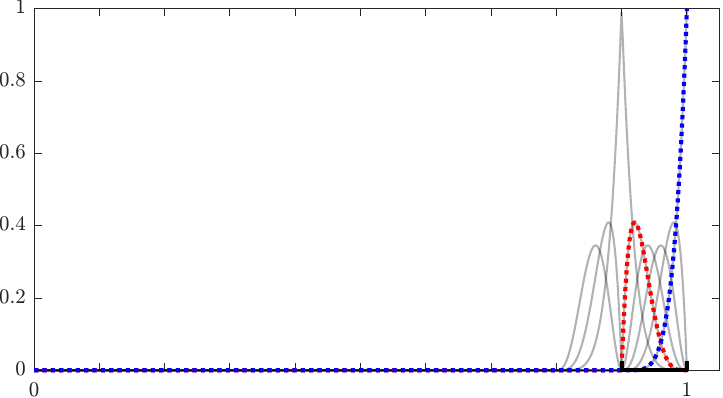}
        \caption{$C^0$ continuity.}
    \end{subfigure}
    \caption{B-splines attaining $\max_i K_{ii} / \LM_{ii}$ (red) and $\min_i K_{ii} / \LM_{ii}$ (blue), for degree $p = 5$ splines. The black segment represents the trimmed element.}
    \label{fig:1D_trimming_whichspline}
\end{figure}

\begin{rmk}
    In his Ph.D. thesis, Leidinger \cite{leidinger2020explicit} used a different argument to understand whether the largest eigenvalue blew up or remained bounded. By explicitly computing some of the entries of $\LM^{-1}K$ and from the equalities
    \begin{equation*}
        \sum_{i=1}^n \lambda_i(K,\LM) = \mathrm{trace}(\LM^{-1}K) = \sum_{i=1}^n \frac{K_{ii}}{\LM_{ii}},
    \end{equation*}
    he concluded that $\lambda_n(K,\LM)$ blew up if any of the diagonal entries of $\LM^{-1}K$ blew up. Conversely, it remained bounded if all diagonal entries of $\LM^{-1}K$ remained bounded. Our results are more precise: firstly, we provide both lower and upper bounds specifically on $\lambda_n(K,\LM)$ and secondly provide an upper bound on $\lambda_1(K,\LM)$. Additionally, both bounds accurately capture the dependency on the continuity.
\end{rmk}

\begin{rmk}
    Analogous computations for the consistent mass matrix yield
    \[
        M_{ii} = \int_\Omega \varphi_i^2 = \int_0^{\delta} \left( \xi^{k+1} \left(C + \calO(\xi)\right) \right)^2 \dxi \sim \delta^{2k+3} \qquad \text{as } \delta \to 0.
    \]
    Thus, if $\varphi_i \in \Phi^S$, we obtain the following lower bound for the largest eigenvalue when using the consistent mass matrix:
    \begin{equation} \label{eq:lambdan_consistent_1d}
        \lambda_n(K,M) \geq \frac{K_{ii}}{M_{ii}} \sim \frac{\delta^{2k+1}}{\delta^{2k+3}} \sim \delta^{-2} \qquad \text{as } \delta \to 0.
    \end{equation}
    Thus, with the consistent mass matrix, the largest eigenvalue grows at least as $\calO(\delta^{-2})$, independently of the spline degree and continuity. This result is well-known in the community and has been independently shown in \cite{stoter2023critical}. Note that the upper bound \eqref{eq:upperbound_lambdan_Bdiag_infnorm} does not hold for the consistent mass due to its non-diagonal structure. Instead, the result follows from a standard inverse inequality. The argument also holds in higher dimensions and is postponed to the next section.
\end{rmk}

\subsection{Numerical experiments}\label{sec:exp_1d}
We consider the fictitious domain $\Omega_0 = (0, 1)$, discretized using B-splines and $N = 1/h$ subdivisions, where $h$ is the mesh size. Let $p$ denote the spline degree and $k = p - m$ the continuity. We solve the Laplace eigenvalue problem on the trimmed domain $\Omega = (0, 1-h+\delta)$, where the last element on the right boundary is trimmed by $h-\delta$, similarly to \cref{fig:1D_trimming}. Here, we consider $N = 2^7$ and $\delta = 10^{-3}, 10^{-4}, \dots, 10^{-8}$. Homogeneous Dirichlet boundary conditions are strongly imposed on the left boundary, while homogeneous Neumann boundary conditions are applied on the right boundary.

As shown in \Cref{fig:1Dtrimming_lambdan}, the lower bound \eqref{eq:lowerbound_lambdan} and the upper bound \eqref{eq:upperbound_lambdan_Bdiag_infnorm} not only accurately capture the behavior of $\lambda_n(K, \LM)$ as a function of the trimming parameter $\delta$ but also numerically yield exceedingly sharp bounds. Additionally, these experimental results perfectly match \eqref{eq:lambdan_1d}. Specifically, the behavior of $\lambda_n(K, \LM)$ depends solely on the continuity of the spline basis, not its degree: it scales as $\calO\left(\delta^{-1}\right)$ for $C^0$ inter-element continuity and remains nearly constant otherwise. On the other hand, $\lambda_n(K, M)$ scales as $\calO(\delta^{-2})$ regardless of the degree or continuity of the splines, consistently with \eqref{eq:lambdan_consistent_1d}.

\begin{figure}[h!]
    \centering
    \begin{subfigure}[t]{0.49\textwidth}
        \centering
        \includegraphics[width=\textwidth]{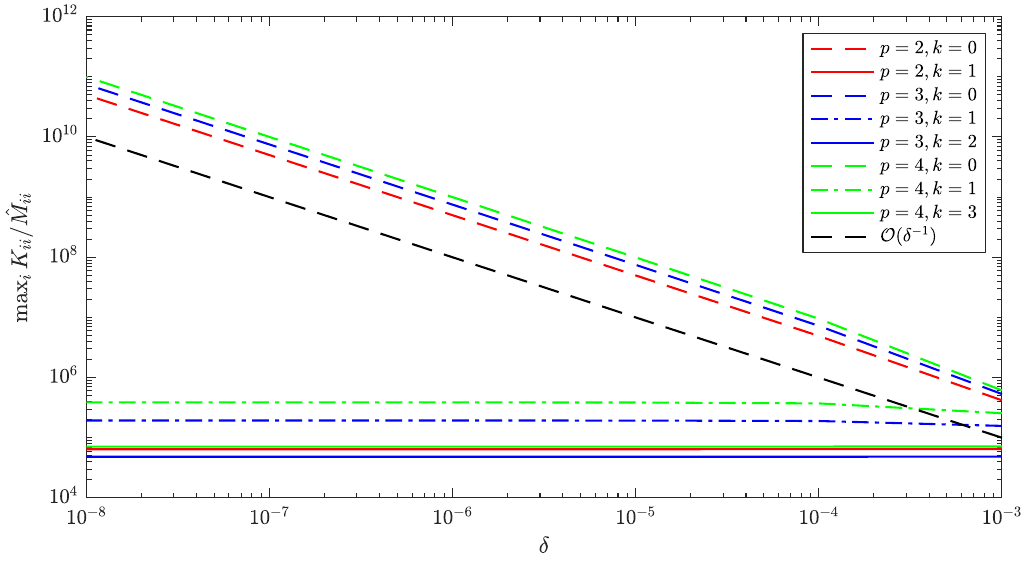}
        \caption{Lower bound $\max_i K_{ii} / \LM_{ii}$.}
    \end{subfigure}
    \hfill
    \begin{subfigure}[t]{0.49\textwidth}
        \centering
        \includegraphics[width=\textwidth]{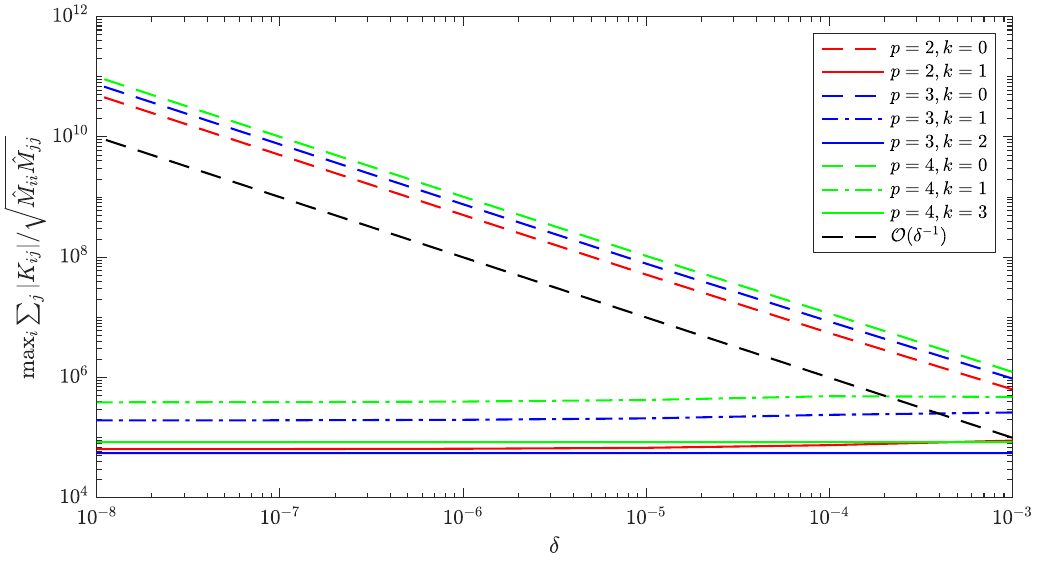}
        \caption{Upper bound $\max_{i} \sum_{j} \abs{K_{ij}} / \sqrt{\LM_{ii}\LM_{jj}}$.}
    \end{subfigure}
    \hfill
    \begin{subfigure}[b]{0.49\textwidth}
        \centering
        \includegraphics[width=\textwidth]{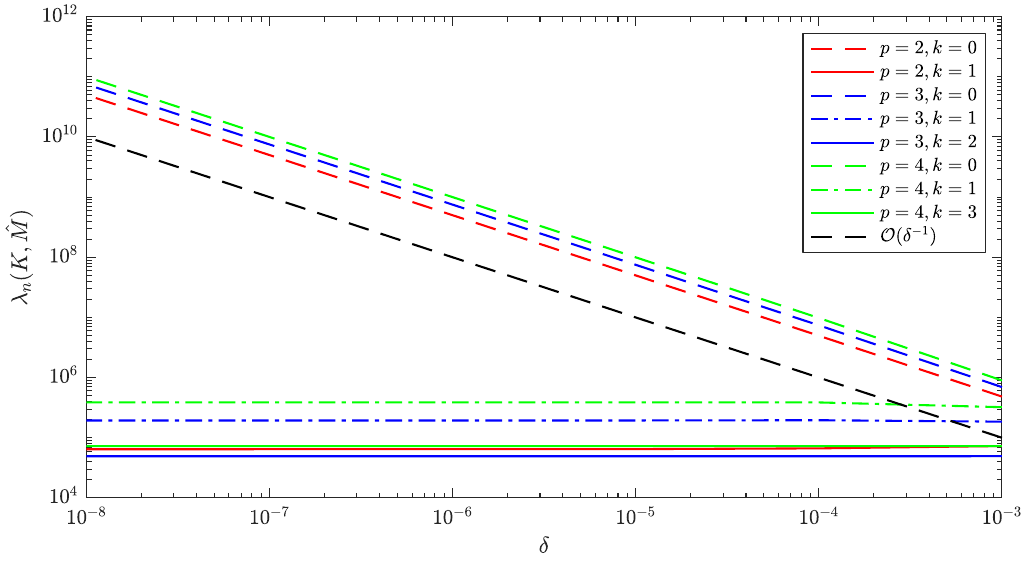}
        \caption{Largest eigenvalue with lumped mass.}
    \end{subfigure}
    \hfill
    \begin{subfigure}[b]{0.49\textwidth}
        \centering
        \includegraphics[width=\textwidth]{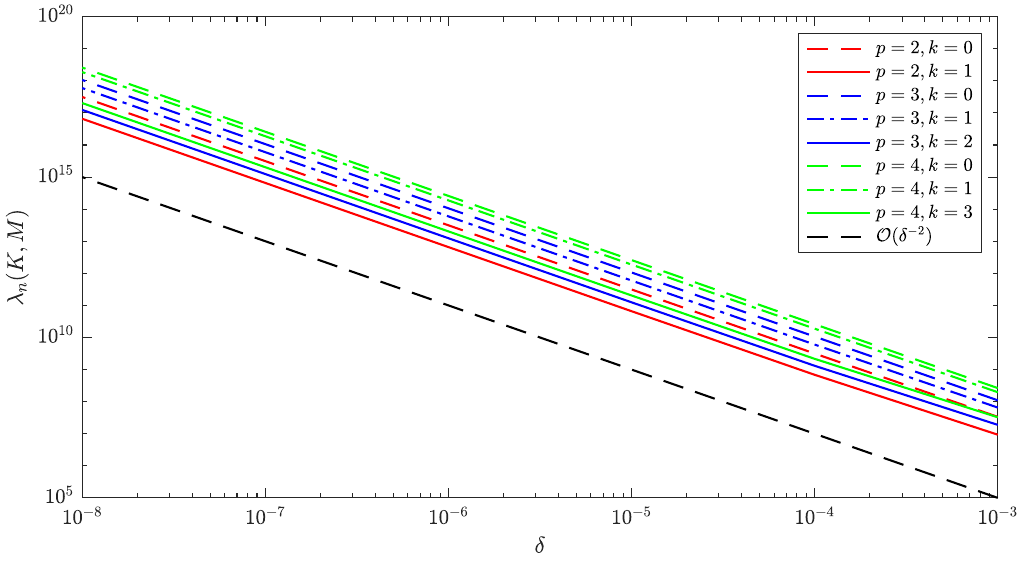}
        \caption{Largest eigenvalue with consistent mass.}
    \end{subfigure}
    \caption{Behavior of the largest eigenvalue with consistent and lumped mass matrices, the lower bound \eqref{eq:lowerbound_lambdan}, and the upper bound \eqref{eq:upperbound_lambdan_Bdiag_infnorm}, as a function of the trimming parameter $\delta$.}
    \label{fig:1Dtrimming_lambdan}
\end{figure}

\Cref{fig:1Dtrimming_lambda1} illustrates the behavior of the smallest eigenvalue of the matrix pair $(K, \LM)$ alongside the upper bound \eqref{eq:upperbound_lambda1}. The decay rate of $\min_i \hat{R}(\varphi_i)$ aligns perfectly with \eqref{eq:lambda1_1d}, and eventually, so does $\lambda_1(K, \LM)$. Notably, their asymptotic decay rate depends exclusively on the degree of the spline basis, rather than the continuity. However, contrary to the lower and upper bounds \eqref{eq:lowerbound_lambdan} and \eqref{eq:upperbound_lambdan_Bdiag_infnorm} on the largest eigenvalue, the upper bound $\min_i \hat{R}(\varphi_i)$ on the smallest eigenvalue is not tight for splines of smoothness $k \leq p-2$. Most likely, the actual eigenfunction $u_1 \in V_h$ corresponding to $\lambda_1(K, \LM)$ is a linear combination of small basis functions $\varphi_i \in \Phi^S$ and $\hat{R}(u_1)$ is significantly smaller than $\min_i \hat{R}(\varphi_i)$, although it decays at the same rate.

\begin{figure}[h!]
    \centering
    \begin{subfigure}[b]{0.49\textwidth}
        \centering
        \includegraphics[width=\textwidth]{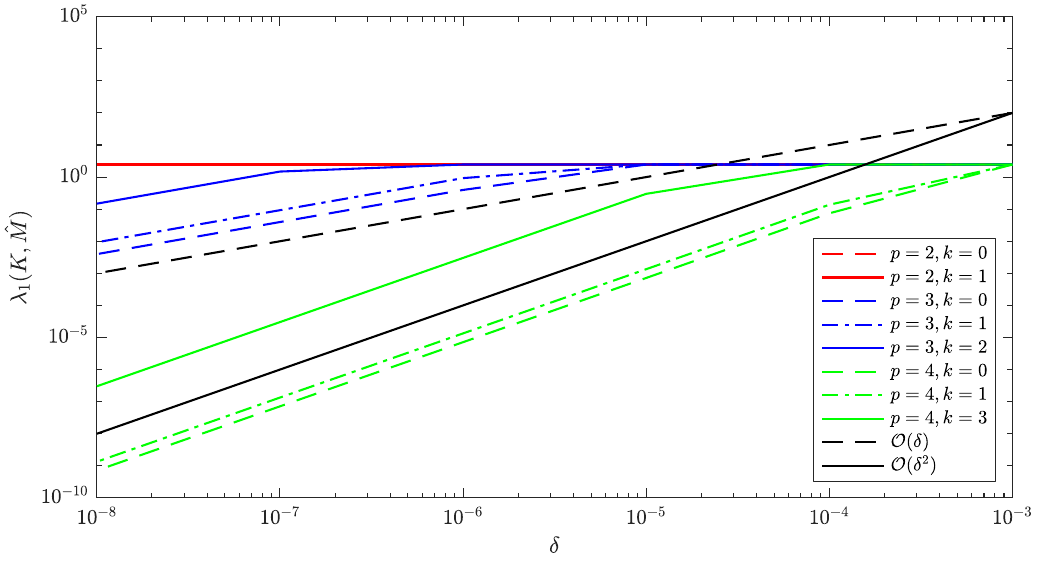}
        \caption{Smallest eigenvalue $\lambda_1(K, \LM)$.}
    \end{subfigure}
    \hfill
    \begin{subfigure}[b]{0.49\textwidth}
        \centering
        \includegraphics[width=\textwidth]{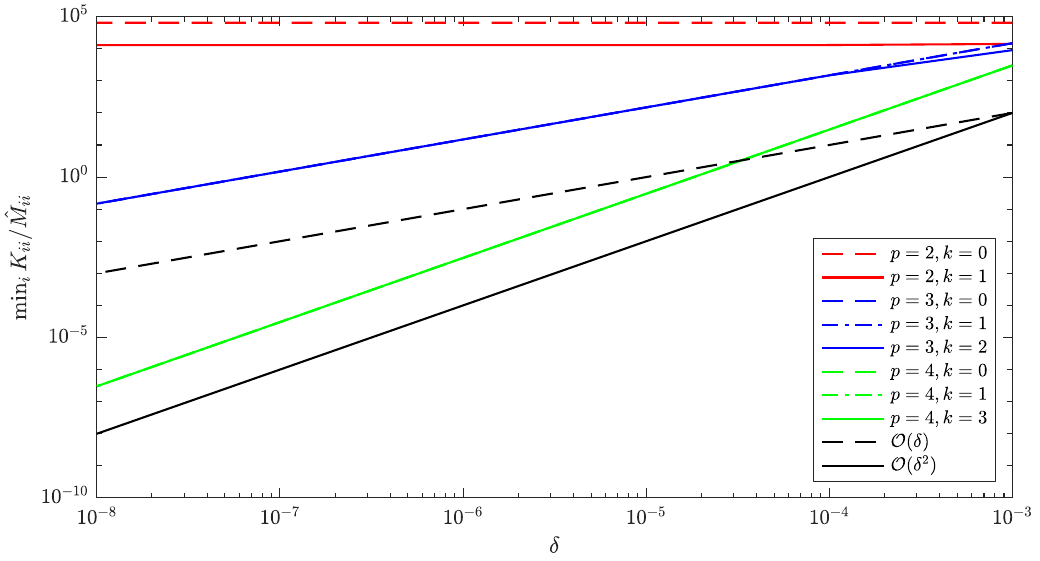}
        \caption{Upper bound $\min_i K_{ii} / \LM_{ii}$.}
    \end{subfigure}
    \caption{Behavior of the smallest eigenvalue of the matrix pair $(K, \LM)$ and the upper bound $\min_i K_{ii} / \LM_{ii}$, with respect to the trimming parameter $\delta$.}
    \label{fig:1Dtrimming_lambda1}
\end{figure}

\Cref{fig:1D_trimming_eigen} compares the B-splines that minimize (resp. maximize) $\hat{R}(\varphi_i)$ to the actual eigenfunctions minimizing (resp. maximizing) the Rayleigh quotient \eqref{eq:rayleigh_quotient}. In most cases, the eigenfunctions closely follow the basis functions maximizing or minimizing $\hat{R}(\varphi_i)$. Although the largest eigenfunction departs from the basis function maximizing $\hat{R}(\varphi_i)$ in the $C^0$ case, it is merely a linear combination of small basis functions $\varphi_i \in \Phi^S$. Having such a good agreement between eigenfunctions and basis functions is not entirely surprising since the lumped mass matrix $\hat{M}$ is basis-dependent and so are the generalized eigenvalues of $(K,\hat{M})$, contrary to those for a consistent mass.

\begin{figure}[h!]
    \centering
    \begin{subfigure}[b]{0.49\textwidth}
        \centering
        \includegraphics[width=\textwidth]{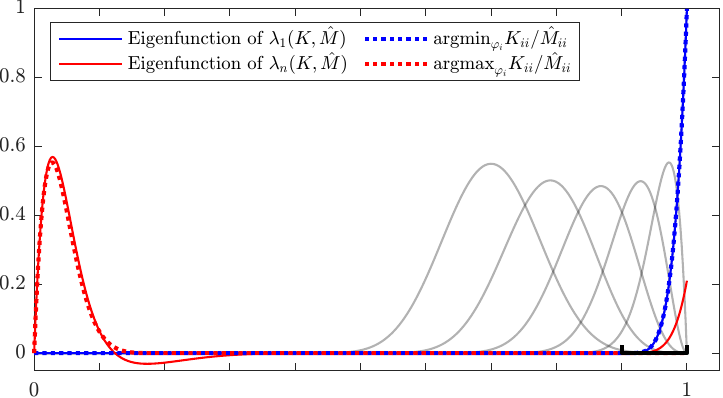}
        \caption{$C^{p-1}$ continuity.}
        \label{fig:1D_trimming_eigen_Cp-1}
    \end{subfigure}
    \hfill
    \begin{subfigure}[b]{0.49\textwidth}
        \centering
        \includegraphics[width=\textwidth]{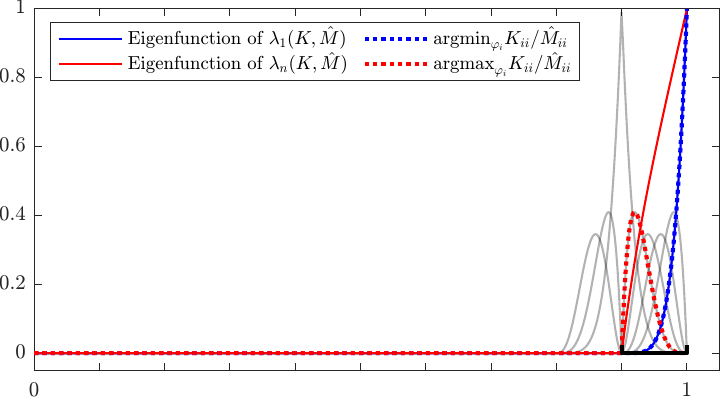}
        \caption{$C^0$ continuity.}
        \label{fig:1D_trimming_eigen_C0}
    \end{subfigure}
    \caption{Eigenfunctions corresponding to the smallest and largest eigenvalue, for degree $p = 5$ splines. The black segment represents the trimmed element.}
    \label{fig:1D_trimming_eigen}
\end{figure}

For $C^{p-1}$ continuity, the maximum eigenvalue is similar to an outlier eigenvalue for a boundary-fitted discretization. Indeed, \Cref{fig:1D_trimming_eigen} reveals that the largest eigenfunction is supported near the non-trimmed boundary and closely resembles one of the B-splines originating from the open knot vector. From this observation, Meßmer et al. \cite{messmer2021isogeometric,messmer2022efficient} suggested artificially embedding the physical domain in a larger fictitious domain in order to trim off both sides of the knot vector and remove outlier eigenvalues originating from open knot vectors.

\section{Trimming in 2D}
\label{se:2D_trimming}
Compared to the 1D case, 2D trimming configurations feature an additional layer of complexity. Indeed, unlike its one-dimensional counterpart, a curve in 2D may arbitrarily trim the support of a basis function in different ways and bad basis functions may exhibit widely different behaviors. We have identified three representative ways in which the support of a basis function is commonly trimmed. Those cases, depicted in \Cref{fig:2Dtrimming_allconfigurations} for quadratic $C^1$ splines in some idealized configurations, are not exhaustive but cover a broad range of possibilities. Essentially, a trimming curve may cut through the support of a basis function:
\begin{enumerate}[label=(\alph*), itemsep=0pt, parsep=0pt]
    \item Along one direction only (\Cref{fig:trimming2d_1direction}),
    \item In two directions but not around a corner of the support (\Cref{fig:trimming2d_angle}),
    \item In two directions and around a corner of the support (\Cref{fig:trimming2d_2directions}).
\end{enumerate}
Although a trimming curve may create completely arbitrary trimming configurations, critical cut configurations should essentially fall in either one of the configurations listed above. \Cref{fig:trimming2d_categorization}, for instance, shows a trimmed geometry where all three cases are practically encountered. In practice, those configurations often arise simultaneously. For example, the left corner of the domain in \Cref{fig:trimming2d_categorization} gives rise to both configurations \ref{fig:trimming2d_angle} and \ref{fig:trimming2d_2directions}. In such cases, we must retain the most critical configuration among all those occurring. Since all our analysis (see \Cref{corollary:eigen_laplace}) revolves around basis functions instead of single elements, all trimming configurations (including those above) are described in terms of how the trimming curve cuts through the support of basis functions. This distinction is crucial and is a sharp contrast to most existing work that focuses on individual elements (e.g. for setting up quadrature rules). Indeed, the properties of the basis, and in particular its smoothness, are just as important as the actual geometry. Thus, we must analyze configurations that combine both aspects. Configurations \ref{fig:trimming2d_1direction}-\ref{fig:trimming2d_2directions} either differ in the geometry of the trimmed element $\Omega_\ell$ or in the behavior of the basis function $\varphi(\xi_1, \xi_2) = \varphi_1(\xi_1)\varphi_2(\xi_2)$ that is supported on it. More specifically, denoting $\rho_{\min}$ and $\rho_{\max}$ the radii of the largest and smallest balls such that $\mathcal{B}_{\min} \subseteq \Omega_\ell \subseteq \mathcal{B}_{\max}$,
\begin{itemize}[noitemsep]
    \item In configuration \ref{fig:trimming2d_1direction}, $\varphi_1(\xi_1) \sim 1$ and $\varphi_2(\xi_2) \ll 1$ while $\rho_{\min} \sim \delta$ and $\rho_{\max} \sim h$,
    \item In configuration \ref{fig:trimming2d_angle}, $\varphi_1(\xi_1) \sim 1$ and $\varphi_2(\xi_2) \ll 1$ while $\rho_{\min} \sim \rho_{\max} \sim \delta$,
    \item In configuration \ref{fig:trimming2d_2directions}, $\varphi_1(\xi_1) \ll 1$ and $\varphi_2(\xi_2) \ll 1$ while $\rho_{\min} \sim \rho_{\max} \sim \delta$.
\end{itemize}

Those differences will later transpire in the analysis. The following lemma holds for any small basis function independently of the trimming configuration (\ref{fig:trimming2d_1direction}-\ref{fig:trimming2d_2directions}) and will be useful for simplifying the analysis.


\begin{figure}[h!]
    \centering
    \begin{subfigure}[t]{0.32\textwidth}
        \centering
        \includegraphics[width=\textwidth]{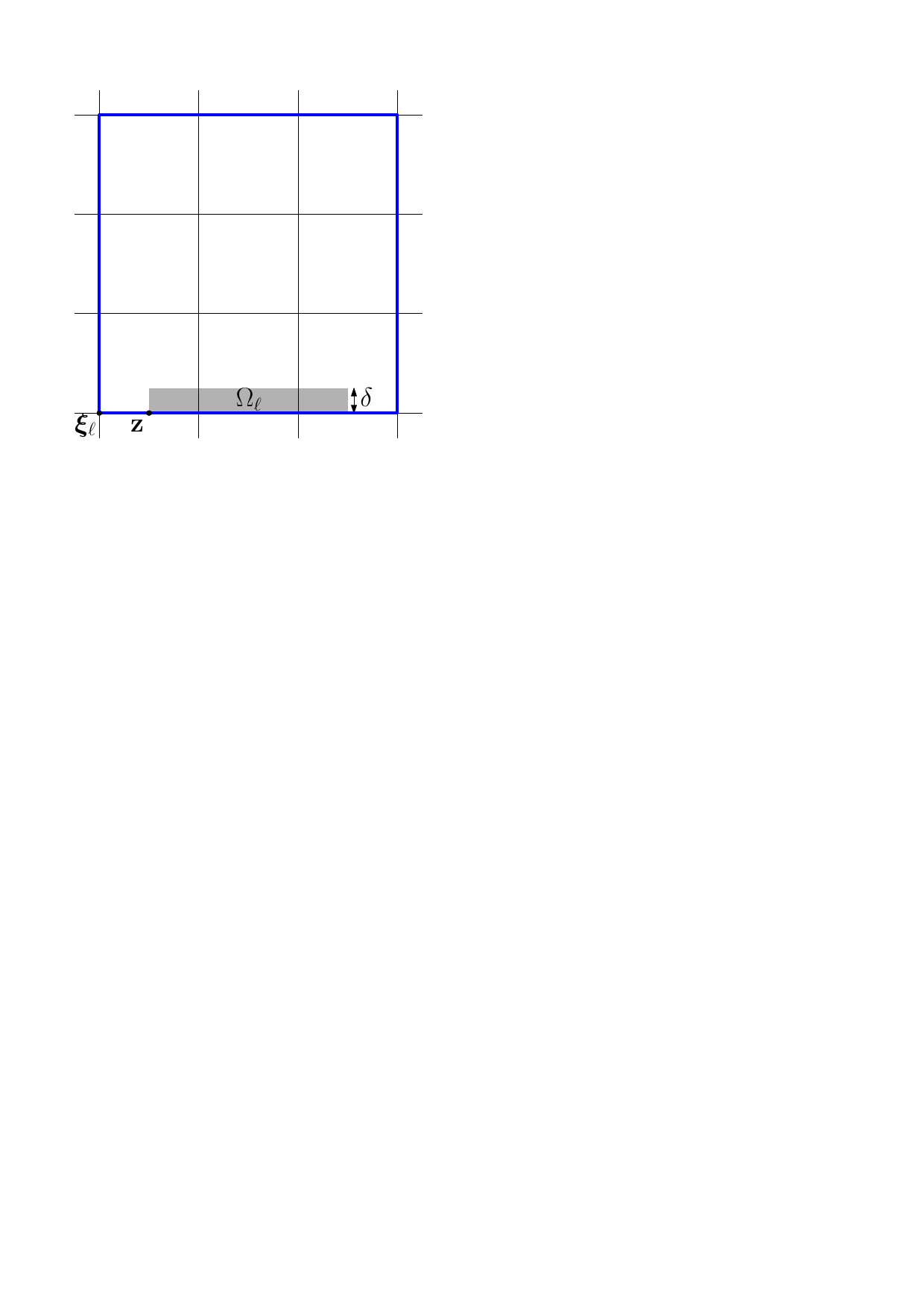}
        \caption{Trimming curve cutting along one direction only.}
        \label{fig:trimming2d_1direction}
    \end{subfigure}
    \hfill
    \begin{subfigure}[t]{0.32\textwidth}
        \centering
        \includegraphics[width=\textwidth]{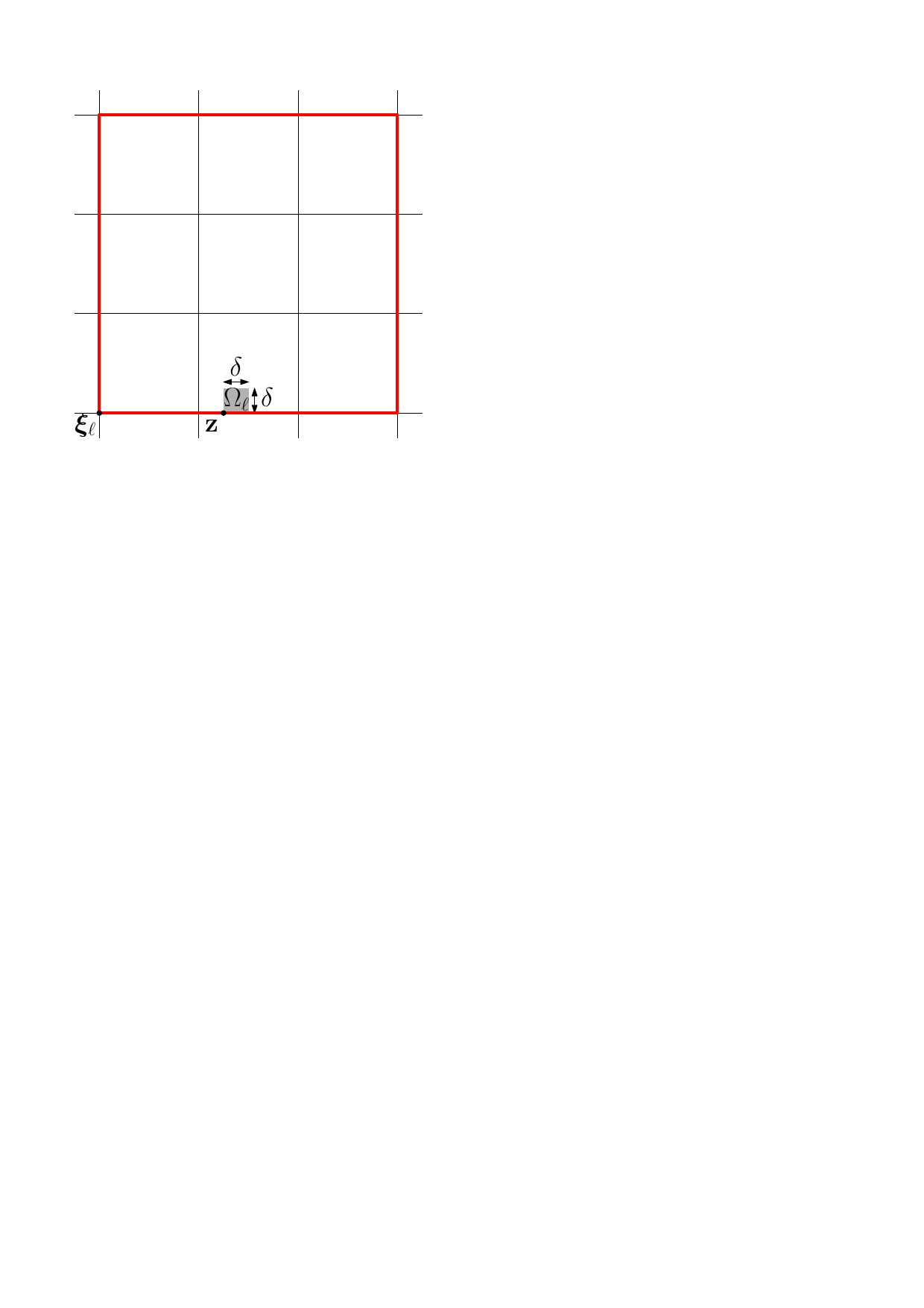}
        \caption{Trimming curve cutting in two directions but not around a corner of the support.}
        \label{fig:trimming2d_angle}
    \end{subfigure}
    \hfill
    \begin{subfigure}[t]{0.32\textwidth}
        \centering
        \includegraphics[width=\textwidth]{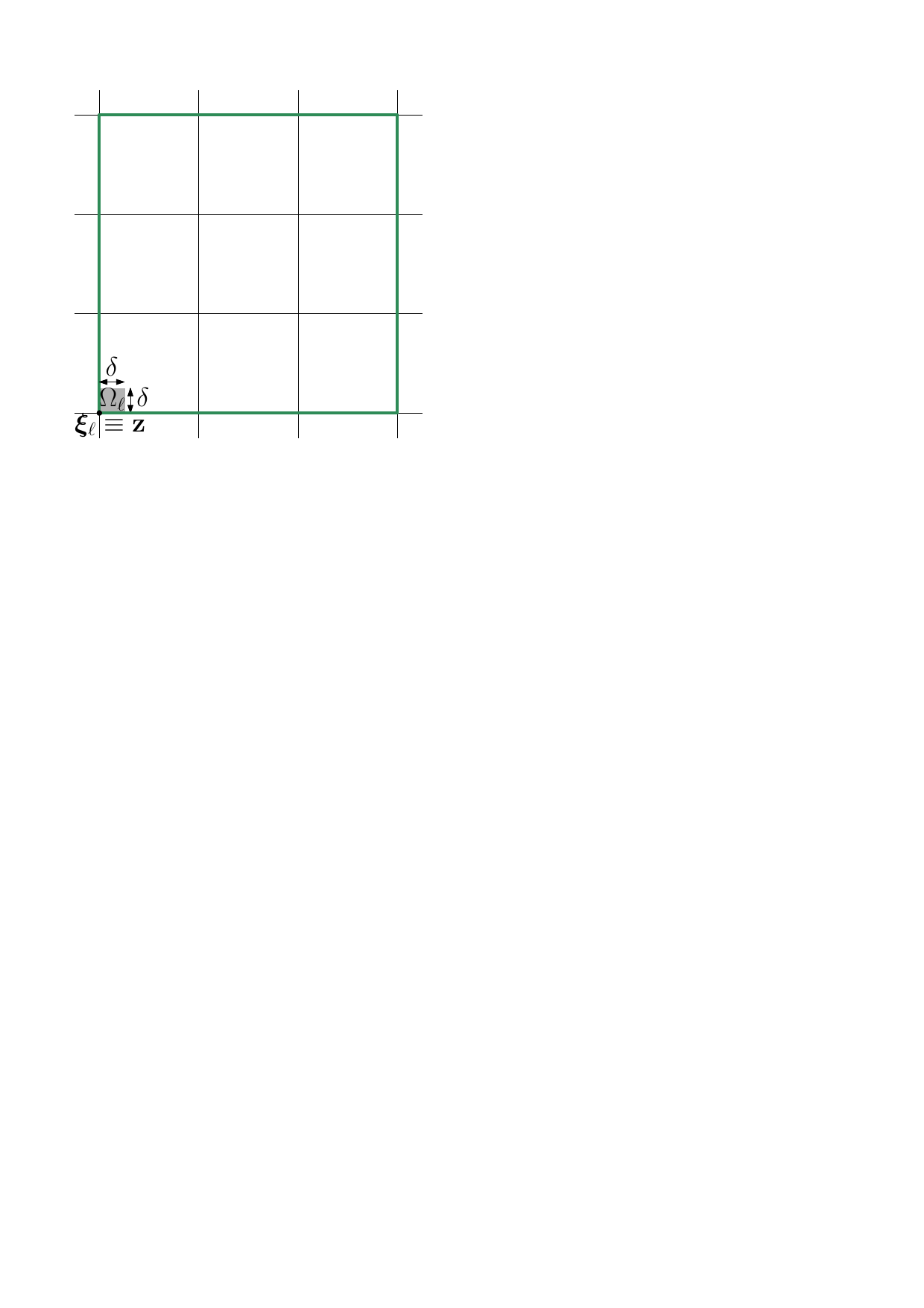}
        \caption{Trimming curve cutting in two directions and around a corner of the support.}
        \label{fig:trimming2d_2directions}
    \end{subfigure}
    \hfill
    \begin{subfigure}[b]{0.5\textwidth}
        \centering
        \includegraphics[width=\textwidth]{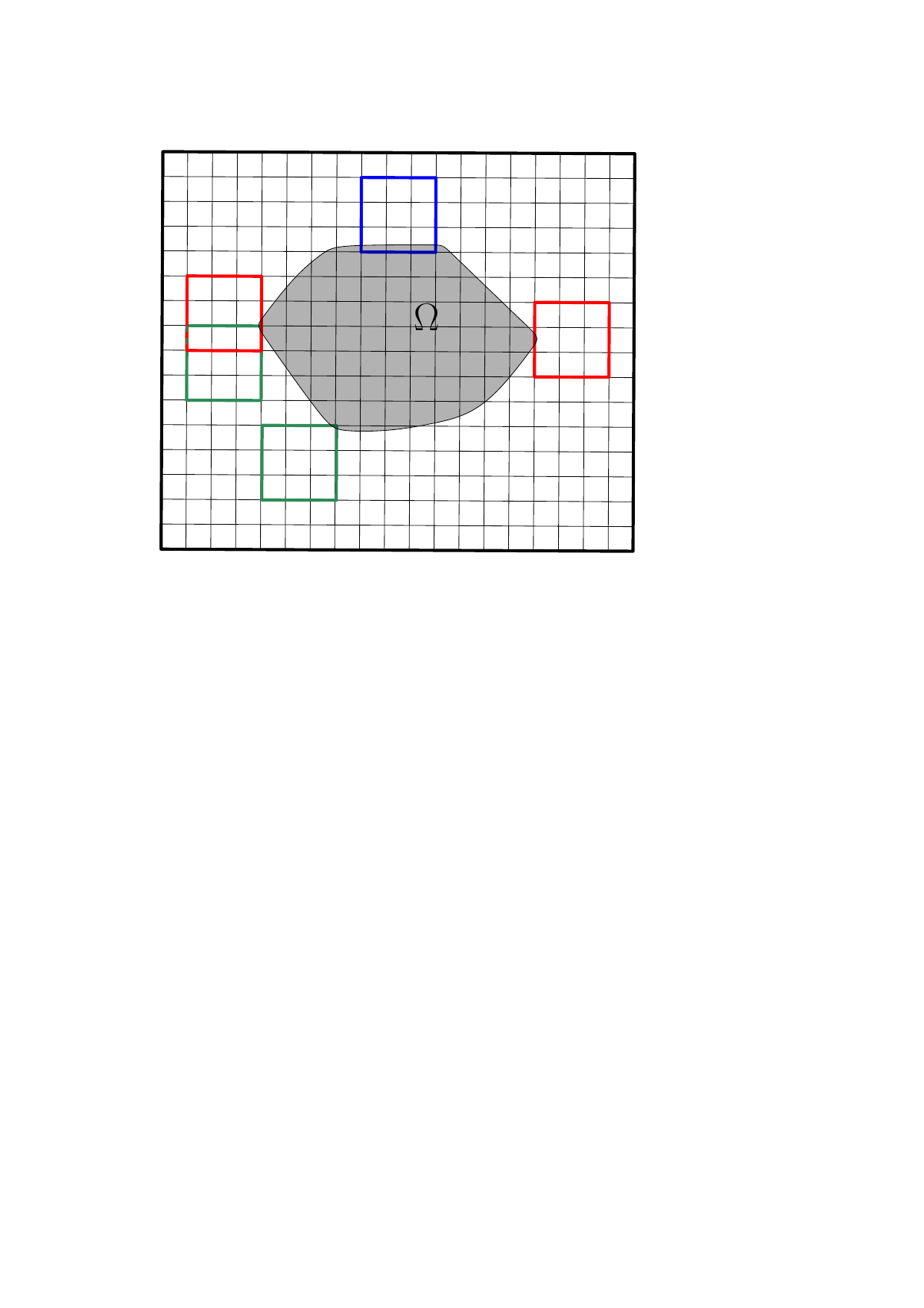}
        \caption{Support of several bad basis functions.}
        \label{fig:trimming2d_categorization}
    \end{subfigure}
    \caption{Idealized configurations for some bad basis functions in 2D.}
    \label{fig:2Dtrimming_allconfigurations}
\end{figure}

\begin{appendixlemmarep}
    \label{lem: L2_H1_eq}
    If $\varphi \in \Phi^S$, then, for any of the trimming configurations \ref{fig:trimming2d_1direction}-\ref{fig:trimming2d_2directions},
    \begin{equation}\label{eq:badlytrimmed_h1l2}
        \delta^{-2}\norm{\varphi}_{\LtwoO}^2 \lesssim \abs{\varphi}_{\HoneO}^2 \lesssim \delta^{-2}\norm{\varphi}_{\LtwoO}^2, \qquad \text{as } \delta \to 0.
    \end{equation}
\end{appendixlemmarep}
\begin{proofsketch}
    The upper bound is derived using elementwise inverse inequalities, while the lower bound is obtained by rescaling the Poincaré inequality to the unit square. A detailed proof is presented in Appendix \ref{proof:lemma_poincare}.
\end{proofsketch}
\begin{appendixproof}
    \label{proof:lemma_poincare}
    The upper bound follows from standard elementwise inverse inequalities:
    \begin{equation*}
        \abs{\varphi}_{\HoneO}^2 = \sum_{\substack{T \in \mathcal{T}_h}} |\varphi |^2_{\Hone(T \cap \Omega)} \lesssim \sum_{\substack{T \in \mathcal{T}_h}} \delta^{-2} \|\varphi \|^2_{\Ltwo(T \cap \Omega)} = \delta^{-2}\norm{\varphi}_{\LtwoO}^2.
    \end{equation*}
    The lower bound is obtained by rescaling the Poincaré inequality. In all trimming configurations~\ref{fig:trimming2d_1direction}-\ref{fig:trimming2d_2directions}, the basis function $\varphi$ is zero on the bottom edge of its active support $\Omega_\ell = \supp_\Omega(\varphi)$. Let $D_\delta$ be the linear transformation that rescales $\Omega_\ell$ to a rectangle with sides of length $\calO(h)$, i.e. $D_\delta = \mathrm{diag}(1, h/\delta)$ for configuration~\ref{fig:trimming2d_1direction} and $D_\delta = \mathrm{diag}(h/\delta, h/\delta)$ for configurations~\ref{fig:trimming2d_angle}, \ref{fig:trimming2d_2directions}. Then:
    \begin{equation*}
        \begin{split}
            \norm{\varphi}_{\LtwoO}^2 & = \int_{\Omega_\ell} \varphi^2(\bm{\xi})\mathrm{d}\bm{\xi} = \int_{D_\delta(\Omega_\ell-\bm{z})} \varphi^2(D_\delta^{-1}\bm{\zeta}+\bm{z})\abs{\mathrm{det}(D_\delta)}^{-1}\mathrm{d}\bm{\zeta}               \\
                                      & \lesssim \int_{D_\delta(\Omega_\ell-\bm{z})}\norm{D_\delta^{-1}\nabla\varphi(D_\delta^{-1}\bm{\zeta}+\bm{z})}^2\abs{\mathrm{det}(D_\delta)}^{-1}\mathrm{d}\bm{\zeta}                                          \\
                                      & = \int_{\Omega_\ell} \norm{D_\delta^{-1}\nabla \varphi(\bm{\xi})}^2\mathrm{d}\bm{\xi} \lesssim \delta^2 \int_{\Omega_\ell} \norm{\nabla \varphi(\bm{\xi})}^2\mathrm{d}\bm{\xi} \qquad\text{as } \delta \to 0.
        \end{split}
    \end{equation*}
    The last inequality is trivial for configurations~\ref{fig:trimming2d_angle}-\ref{fig:trimming2d_2directions}, where $D_\delta = \mathrm{diag}(h/\delta, h/\delta)$. In the trimming configuration~\ref{fig:trimming2d_1direction}, denote $\bm{\xi}_\ell = (\xi_{\ell_1}^{(1)}, \xi_{\ell_2}^{(2)})$ and $\varphi(\xi_1, \xi_2) = \varphi_{i_1}(\xi_1)\varphi_{i_2}(\xi_2)$. Its active support $\Omega_\ell$ is the rectangle $\Omega_\ell = \Omega_1 \times \Omega_2 = [c_1, c_2] \times [\xi_{\ell_2}^{(2)}, \xi_{\ell_2}^{(2)}+\delta]$, where $c_1 < c_2$ do not depend on $\delta$. Thus, if $\varphi_{i_2}$ is $C^{k_2}$ across the knot $\xi_{\ell_2}^{(2)}$ but not $C^{k_2+1}$, then, analogously to the one-dimensional case, we obtain:
    \begin{equation*}
        \begin{split}
             & \int_{\Omega_\ell} \norm{D_\delta^{-1}\nabla \varphi(\bm{\xi})}^2\mathrm{d}\bm{\xi} = \int_{\Omega_\ell}\left(\varphi_{i_1}'(\xi_1)\varphi_{i_2}(\xi_2)\right)^2 + \left(\frac{\delta}{h}\varphi_{i_1}(\xi_1)\varphi_{i_2}'(\xi_2)\right)^2\, \dxi_1\dxi_2                                                                                              \\
             & = \int_{c_1}^{c_2} (\varphi_{i_1}'(\xi_1))^2 \, \mathrm{d}\xi_1 \int_{\xi_{\ell_2}^{(2)}}^{\xi_{\ell_2}^{(2)}+\delta} (\varphi_{i_2}(\xi_2))^2 \, \mathrm{d}\xi_2 + \frac{\delta^2}{h^2}\int_{c_1}^{c_2} (\varphi_{i_1}(\xi_1))^2 \, \mathrm{d}\xi_1 \int_{\xi_{\ell_2}^{(2)}}^{\xi_{\ell_2}^{(2)}+\delta} (\varphi_{i_2}'(\xi_2))^2 \, \mathrm{d}\xi_2 \\
             & \sim \int_{\xi_{\ell_2}^{(2)}}^{\xi_{\ell_2}^{(2)}+\delta} (\varphi_{i_2}(\xi_2))^2 \, \mathrm{d}\xi_2 + \delta^2\int_{\xi_{\ell_2}^{(2)}}^{\xi_{\ell_2}^{(2)}+\delta} (\varphi_{i_2}'(\xi_2))^2 \, \mathrm{d}\xi_2                                                                                                                                     \\
             & \sim \int_0^\delta \xi_2^{2k_2+2} (C+\calO(\xi_2)) \, \mathrm{d}\xi_2 + \delta^2 \int_0^\delta \xi_2^{2k_2} (C+\calO(\xi_2)) \, \mathrm{d}\xi_2 \sim \delta^{2k_2+3}.
        \end{split}
    \end{equation*}
    and similarly
    \begin{equation*}
        \int_{\Omega_\ell} \norm{\nabla \varphi(\bm{\xi})}^2\mathrm{d}\bm{\xi} \sim \int_0^\delta \xi_2^{2k_2+2} (1+o(1)) \, \mathrm{d}\xi_2 + \int_0^\delta \xi_2^{2k_2} (1+o(1)) \, \mathrm{d}\xi_2 \sim \delta^{2k_2+1}.
    \end{equation*}
    This concludes the proof.
\end{appendixproof}
\begin{toappendix}
    Note that the lower bound does not always follow from a standard Poincaré inequality: in configuration~\ref{fig:trimming2d_1direction} it critically relies on the smoothness of the function. Thus, the validity of the lower bound must be studied on a case-by-case basis, whereas the upper bound, which only relies on a standard inverse inequality, holds more generally for functions $v_h \in V_h^S$.
\end{toappendix}

We must stress that Lemma~\ref{lem: L2_H1_eq} only holds for small basis functions $\varphi \in \Phi^S$ and configurations \ref{fig:trimming2d_1direction}-\ref{fig:trimming2d_2directions}. For such functions, the ratio $K_{ii}/M_{ii}$ behaves as $\delta^{-2}$ and consequently, for the largest eigenvalue with the consistent mass matrix,
\[
    \lambda_n(K,M) \geq \max_{i=1,\dots,n} \frac{\abs{\varphi_i}_{\HoneO}^2}{\norm{\varphi_i}_{\LtwoO}^2} \sim \delta^{-2}
\]
analogously to the 1D case \eqref{eq:lambdan_consistent_1d}. Once again, the upper bound follows from a standard inverse inequality, as in the proof of Lemma~\ref{lem: L2_H1_eq}. However, contrary to the lumped mass, the largest eigenvalue for the consistent mass does not always behave as the maximum ratio of diagonal entries $\max_i K_{ii}/M_{ii}$ in the most general (nearly non-physical) trimming configurations. Although the forthcoming discussion relies on Lemma~\ref{lem: L2_H1_eq}, it merely simplifies the analysis, since, for the lumped mass matrix,
\[
    \frac{K_{ii}}{\LM_{ii}} = \frac{\abs{\varphi_i}_{\HoneO}^2}{\norm{\varphi_i}_{\LoneO}} =\frac{\abs{\varphi_i}_{\HoneO}^2}{\norm{\varphi_i}_{\LtwoO}^2}\frac{\norm{\varphi_i}_{\LtwoO}^2}{\norm{\varphi_i}_{\LoneO}} \sim \delta^{-2} \frac{\norm{\varphi_i}_{\LtwoO}^2}{\norm{\varphi_i}_{\LoneO}},
\]
and the behavior of the extreme eigenvalues as $\delta \to 0$ can be deduced from a Taylor expansion of $\varphi_i$ in each variable and bypasses cumbersome derivative computations. However, if Lemma~\ref{lem: L2_H1_eq} does not apply, computing the ratio $K_{ii}/\LM_{ii}$ from scratch might be inevitable. In the next subsections, configurations \ref{fig:trimming2d_1direction}-\ref{fig:trimming2d_2directions} are analyzed separately.

\subsection*{Configuration~\ref{fig:trimming2d_1direction}} Denote $\bm{\xi}_\ell = (\xi_{\ell_1}^{(1)}, \xi_{\ell_2}^{(2)})$ the knot in configuration~\ref{fig:trimming2d_1direction} and let $\varphi_i(\xi_1, \xi_2) = \varphi_{i_1}(\xi_1)\varphi_{i_2}(\xi_2) \in \Phi^S$ be a bad basis function. Its active support $\Omega_\ell$ is the rectangle $\Omega_1 \times \Omega_2 = [c_1, c_2] \times [\xi_{\ell_2}^{(2)}, \xi_{\ell_2}^{(2)}+\delta]$, where $c_1 < c_2$ do not depend on $\delta$. Thus, if $\varphi_{i_2}$ is $C^{k_2}$ across the knot $\xi_{\ell_2}^{(2)}$ but not $C^{k_2+1}$, then, analogously to the one-dimensional case, we obtain
\[
    \begin{split}
        \norm{\varphi_i}_{\LoneO} & = \int_{\Omega_\ell} \varphi_i(\bm{\xi}) \, \mathrm{d}\bm{\xi} = \int_{c_1}^{c_2} \varphi_{i_1}(\xi_1) \, \mathrm{d}\xi_1 \int_{\xi_{\ell_2}^{(2)}}^{\xi_{\ell_2}^{(2)}+\delta} \varphi_{i_2}(\xi_2) \, \mathrm{d}\xi_2 \\
                                  & \sim \int_{\xi_{\ell_2}^{(2)}}^{\xi_{\ell_2}^{(2)}+\delta} \varphi_{i_2}(\xi_2) \, \mathrm{d}\xi_2 \sim \int_0^\delta \xi_2^{k_2+1} (C+\calO(\xi_2)) \, \mathrm{d}\xi_2 \sim \delta^{k_2+2}.
    \end{split}
\]
Similarly, for the $\Ltwo$ norm:
\[
    \norm{\varphi_i}_{\LtwoO}^2 \sim \int_0^\delta \xi_2^{2(k_2+1)} (1+o(1)) \, \mathrm{d}\xi_2 \sim \delta^{2k_2+3}.
\]
Therefore,
\[
    \frac{\abs{\varphi_i}_{\HoneO}^2}{\norm{\varphi_i}_{\LoneO}} \sim \delta^{-2} \frac{\norm{\varphi_i}_{\LtwoO}^2}{\norm{\varphi_i}_{\LoneO}} \sim \delta^{-2} \frac{\delta^{2k_2+3}}{\delta^{k_2+2}} = \delta^{k_2-1}.
\]
By \Cref{thm:spline_continuity}, there exists such a spline $\varphi_{i_2}$ for each $k_2 = \kappa_2, \dots, p_2-1$. Thus, the largest eigenvalue $\lambda_n(K, \LM)$ can blow up due to a trimming configuration as in~\ref{fig:trimming2d_1direction} only if $k_2 = 0$, resulting in
\begin{equation}\label{eq:case1_lambdan}
    \lambda_n(K, \LM) \sim
    \begin{cases*}
        \calO\left(\delta^{-1}\right) = \calO\left(\abs{\Omega_\ell}^{-1}\right) & \text{if  $\kappa_2 = 0$} \\
        \calO\left( 1 \right)                                                    & \text{otherwise}
    \end{cases*} \qquad \text{as } \delta \to 0.
\end{equation}
Conversely, the quickest decay to $0$ is obtained for $k_2 = p_2 - 1$, leading to
\begin{equation}\label{eq:case1_lambda1}
    \lambda_1(K, \LM) \leq
    \begin{cases*}
        \calO\left(1\right)                                                              & \text{if  $p_2 \leq 2$} \\
        \calO\left(\delta^{p_2 - 2}\right) = \calO\left(\abs{\Omega_\ell}^{p_2-2}\right) & \text{otherwise}
    \end{cases*} \qquad \text{as } \delta \to 0.
\end{equation}
This behavior perfectly mirrors the 1D case described in \Cref{se:1D_trimming}.

\subsection*{Configuration~\ref{fig:trimming2d_angle}}
For the trimming configuration~\ref{fig:trimming2d_angle}, the active support of a bad basis function $\varphi_i(\xi_1, \xi_2) = \varphi_{i_1}(\xi_1)\varphi_{i_2}(\xi_2) \in \Phi^S$ is $\Omega_\ell = [z_1,z_1+\delta] \times [\xi_{\ell_2}^{(2)}, \xi_{\ell_2}^{(2)}+\delta]$. Assuming $\varphi_{i_2}$ is $C^{k_2}$ across the knot $\xi_{\ell_2}^{(2)}$ but not $C^{k_2+1}$, then
\[
    \begin{split}
        \norm{\varphi_i}_{\LoneO} & = \int_{\Omega_\ell} \varphi_i(\bm{\xi}) \, \mathrm{d}\bm{\xi} = \int_{z_1}^{z_1+\delta} \varphi_{i_1}(\xi_1) \, \mathrm{d}\xi_1 \int_{\xi_{\ell_2}^{(2)}}^{\xi_{\ell_2}^{(2)}+\delta} \varphi_{i_2}(\xi_2) \, \mathrm{d}\xi_2 \\
                                  & \sim \delta \int_{\xi_{\ell_2}^{(2)}}^{\xi_{\ell_2}^{(2)}+\delta} \varphi_{i_2}(\xi_2) \, \mathrm{d}\xi_2 \sim \delta \int_0^\delta \xi_2^{k_2+1} (C+\calO(\xi_2)) \, \mathrm{d}\xi_2 \sim \delta^{k_2+3},
    \end{split}
\]
where the second step follows from a Taylor expansion of $\varphi_{i_1}$ around $z_1$. Analogously, for the $\Ltwo$ norm:
\[
    \norm{\varphi_i}_{\LtwoO}^2 \sim \delta \int_0^\delta \xi_2^{2(k_2+1)} (1+o(1)) \, \mathrm{d}\xi_2 \sim \delta^{2k_2+4}.
\]
Since this configuration raises a power of $\delta$ for both the numerator and denominator, its effect cancels out when computing the ratio and similarly as above, the largest eigenvalue $\lambda_n(K, \LM)$ can blow up in trimming configuration~\ref{fig:trimming2d_angle} only if $k_2 = 0$. This results in
\begin{equation}\label{eq:case2_lambdan}
    \lambda_n(K, \LM) \sim
    \begin{cases*}
        \calO\left(\delta^{-1}\right) = \calO\left(\abs{\Omega_\ell}^{-\frac{1}{2}}\right) & \text{if  $\kappa_2 = 0$} \\
        \calO\left( 1 \right)                                                              & \text{otherwise}
    \end{cases*} \qquad \text{as } \delta \to 0.
\end{equation}
Similarly, the smallest eigenvalue is bounded by
\begin{equation}\label{eq:case2_lambda1}
    \lambda_1(K, \LM) \leq
    \begin{cases*}
        \calO\left(1\right)                                                                        & \text{if  $p_2 \leq 2$} \\
        \calO\left(\delta^{p_2 - 2}\right) = \calO\left(\abs{\Omega_\ell}^{\frac{p_2-2}{2}}\right) & \text{otherwise}
    \end{cases*} \qquad \text{as } \delta \to 0.
\end{equation}

\subsection*{Configuration~\ref{fig:trimming2d_2directions}}
Finally, let us consider a bad basis function $\varphi_i(\xi_1, \xi_2) = \varphi_{i_1}(\xi_1)\varphi_{i_2}(\xi_2) \in \Phi^S$, whose support is trimmed according to configuration~\ref{fig:trimming2d_2directions}. Suppose that $\varphi_{i_j}$ is $C^{k_j}$ across the knot $\xi_{l_j}^{(j)}$ but not $C^{k_j+1}$ for $j=1,2$. Then, for the $\Lone$ norm,
\[
    \begin{split}
        \norm{\varphi_i}_{\LoneO} & = \int_{\Omega_\ell} \varphi_i(\bm{\xi}) \, \mathrm{d}\bm{\xi}
        \sim \int_{\xi_{\ell_1}^{(1)}}^{\xi_{\ell_1}^{(1)}+\delta} \varphi_{i_1}(\xi_1) \, \mathrm{d}\xi_1 \int_{\xi_{\ell_2}^{(2)}}^{\xi_{\ell_2}^{(2)}+\delta} \varphi_{i_2}(\xi_2) \, \mathrm{d}\xi_2 \\
                                  & \sim \int_0^\delta \xi_1^{k_1+1} \mathrm{d}\xi_1 \int_0^\delta \xi_2^{k_2+1} \mathrm{d}\xi_2 \sim \delta^{k_1+k_2+4}.
    \end{split}
\]
Similarly, for the $\LtwoO$ norm:
\[
    \norm{\varphi_i}_{\LtwoO}^2 \sim \int_0^\delta \xi_1^{2(k_1+1)}\, \mathrm{d}\xi_1 \int_0^\delta \xi_2^{2(k_2+1)}\, \mathrm{d}\xi_2 \sim \delta^{2k_1 + 2k_2 +6}.
\]
As a result,
\[
    \frac{\abs{\varphi_i}_{\HoneO}^2}{\norm{\varphi_i}_{\LoneO}} \sim \delta^{-2} \frac{\norm{\varphi_i}_{\LtwoO}^2}{\norm{\varphi_i}_{\LoneO}} \sim \delta^{-2} \frac{\delta^{2k_1+2k_2+6}}{\delta^{k_1+k_2+4}} = \delta^{k_1 + k_2}.
\]
Note that this type of badly trimmed basis function cannot cause the largest eigenvalue to blow up, but can cause the smallest eigenvalue to tend to zero. Thus, for configuration~\ref{fig:trimming2d_2directions}:
\begin{equation}\label{eq:case3_lambdan}
    \lambda_n(K, \LM) \sim \calO\left( 1 \right) \qquad \text{as } \delta \to 0,
\end{equation}
and
\begin{equation}\label{eq:case3_lambda1}
    \lambda_1(K, \LM) \leq
    \begin{cases*}
        \calO\left(1\right)                                                                                & \text{if  $p_1 = p_2 = 1$} \\
        \calO\left(\delta^{p_1+p_2 - 2}\right) = \calO\left(\abs{\Omega_\ell}^{\frac{p_1+p_2-2}{2}}\right) & \text{otherwise}
    \end{cases*} \qquad \text{as } \delta \to 0.
\end{equation}
Thus, the smallest eigenvalue tends to zero whenever $\max(p_1, p_2) > 1$. The results for the three trimming configurations \ref{fig:trimming2d_1direction}-\ref{fig:trimming2d_2directions} are summarized in \Cref{tab:config_eig}. In practice, these cases often appear simultaneously within a trimmed geometry and one should only retain the critical ones, as in the upcoming examples. For instance, configuration~\ref{fig:trimming2d_2directions} often arises in conjunction with configuration~\ref{fig:trimming2d_angle} and the largest eigenvalue may then blow up due to the latter.

\begin{table}[hpt]
    \centering
    \caption{Bounds on the smallest and largest eigenvalues of $(K, \LM)$ as $\delta \to 0$.}
    \label{tab:config_eig}
    \begin{tabular}{l l l}
        \toprule
        Config.                                                                                                                                                                              & $\lambda_1(K, \LM)$ & $\lambda_n(K, \LM)$ \\ \midrule
        \ref{fig:trimming2d_1direction}                                                                                                                                                      &
        $\leq
        \begin{cases*}
                \calO\left(1\right)                                                            & \text{if  $p_2 \leq 2$} \\
                \calO\left(\delta^{p_2 - 2}\right)=\calO\left(\abs{\Omega_\ell}^{p_2-2}\right) & \text{otherwise}
            \end{cases*}$                                                                          &
        $\sim
        \begin{cases*}
                \calO\left(\delta^{-1}\right)=\calO\left(\abs{\Omega_\ell}^{-1}\right) & \text{if  $\kappa_2 = 0$} \\
                \calO\left( 1 \right)                                                  & \text{otherwise}
            \end{cases*}$                                                                                                                               \\
        \ref{fig:trimming2d_angle}                                                                                                                                                           &
        $\leq
        \begin{cases*}
                \calO\left(1\right)                                                                      & \text{if  $p_2 \leq 2$} \\
                \calO\left(\delta^{p_2 - 2}\right)=\calO\left(\abs{\Omega_\ell}^{\frac{p_2-2}{2}}\right) & \text{otherwise}
            \end{cases*}$        &
        $\sim
        \begin{cases*}
                \calO\left(\delta^{-1}\right)=\calO\left(\abs{\Omega_\ell}^{-\frac{1}{2}}\right) & \text{if  $\kappa_2 = 0$} \\
                \calO\left( 1 \right)                                                            & \text{otherwise}
            \end{cases*}$                                                       \\
        \ref{fig:trimming2d_2directions}                                                                                                                                                     &
        $\leq
        \begin{cases*}
                \calO\left(1\right)                                                                              & \text{if  $p_1 = p_2 = 1$} \\
                \calO\left(\delta^{p_1+p_2 - 2}\right)=\calO\left(\abs{\Omega_\ell}^{\frac{p_1+p_2-2}{2}}\right) & \text{otherwise}
            \end{cases*}$ &
        $\sim \calO\left( 1 \right)$                                                                                                                                                                                                     \\
        \bottomrule
    \end{tabular}
\end{table}

The analysis carried out in Sections \ref{se:1D_trimming} and \ref{se:2D_trimming} indicates that at least quadratic splines are required for $\lambda_n(K, \LM)$ to become independent of $\delta$, unless a trimmed geometry exclusively features configuration~\ref{fig:trimming2d_2directions}. For the bi-Laplacian, the minimal degree increases to $4$ and more generally, we expect it matches the order of the differential operator. Generalizing Lemma~\ref{lem: L2_H1_eq} would allow straightforwardly extending our results to the bi-Laplacian and other high-order problems but we will not detail it here.

\subsection{Numerical experiments}
We fix here the notation and common setup for the numerical experiments that follow. The fictitious domain $\Omega_0 = (0, 1)^2$ is discretized in both directions using $N = 1/h$ subdivisions and degree $p$ splines with continuity $k = p-m$. Here, $N = 2^4$ is fixed, and we vary the trimming parameter $\delta$ on $7$ logarithmically spaced values between $10^{-2}$ and $10^{-5}$. The Laplace eigenvalue problem is solved on the trimmed domain $\Omega \subset \Omega_0$, which exemplarily shows the isolated or combined effect of the different trimming configurations~\ref{fig:trimming2d_1direction}-\ref{fig:trimming2d_2directions}.

For trimmed geometries, accurately computing the eigenvalues and eigenvectors is often quite challenging due to the ill-conditioning of system matrices. For a matrix pair $(A,B)$, the conditioning of $B$ heavily impacts the stability of eigensolvers. Following the remark in \cite[Example 5.8]{voet2025mass}, we have instead computed the eigenpairs of the equivalent matrix pair $(DAD,DBD)$, where $D=\diag(d_1,\dots,d_n)$ with $d_i=1/\sqrt{b_{ii}}$ for $i=1,\dots,n$ is a Jacobi preconditioner for $B$ \cite{de2017condition}. The eigenvalues of $(A,B)$ and $(DAD,DBD)$ are the same and their eigenvectors are simply related \cite{saad2011numerical}. Yet, the eigenpairs of the latter are computed more accurately since the conditioning of $DBD$ is oftentimes orders of magnitude smaller than the one of $B$ \cite{de2017condition}. We occasionally also relied on different eigensolvers for enhanced stability.

\begin{ex}[Configurations \ref{fig:trimming2d_1direction} and \ref{fig:trimming2d_2directions}]
    \label{ex:trimmed_square_2_sides}
    The trimmed geometry $\Omega = (0, 1-h+\delta)^2 \subset \Omega_0$ is an immediate extension of the 1D numerical experiment discussed in \Cref{sec:exp_1d} and generates the first and third trimming configurations previously analyzed. Dirichlet boundary conditions are strongly imposed on the left and bottom edges and homogeneous Neumann boundary conditions on the top and right edges. A visual representation is provided in \Cref{fig:2D_trimming_square_domain}.

    The results shown in \Cref{fig:2Dtrimming_square} perfectly align with equations \eqref{eq:case1_lambdan} and \eqref{eq:case3_lambdan}: as $\delta \to 0$, the largest eigenvalue for the lumped mass is tightly squeezed between the upper and lower bounds, scaling as $\calO(\delta^{-1})$ for $C^0$ inter-element continuity, while remaining nearly constant otherwise. Moreover, the decay rate of the smallest eigenvalue $\lambda_1(K, \hat{M})$ depends only on the degree $p$ and is asymptotically aligned with its upper bound $\min_i K_{ii}/\LM_{ii}$, which scales as $\calO(\delta^{2p-2})$, as predicted by equation \eqref{eq:case3_lambda1}. Additionally, the largest eigenvalue with a consistent mass matrix scales as $\calO(\delta^{-2})$, independently of the basis degree and continuity.

    Based on the analysis carried out in \Cref{se:2D_trimming}, we anticipate the following results: For $C^{p-1}$ continuity, configuration~\ref{fig:trimming2d_2directions} is responsible for the minimum eigenvalue while the maximum is independent of $\delta$. For $C^0$ continuity, configuration~\ref{fig:trimming2d_2directions} is still responsible for the minimum eigenvalue while the maximum eigenvalue originates from configuration~\ref{fig:trimming2d_1direction}. Figures \ref{fig:2Dtrimming_square_eigenk2} and \ref{fig:2Dtrimming_square_eigenk0} confirm our speculations by comparing the B-splines achieving the minimum and maximum ratio $K_{ii}/\LM_{ii}$ with the actual eigenfunctions corresponding to the extreme eigenvalues, for both $C^{p-1}$ and $C^0$ splines. Notably, the basis function attaining the minimum ratio closely resembles the eigenfunction corresponding to the spurious minimum eigenvalue $\lambda_1(K, \LM)$, and both appear unaffected by the continuity. In contrast, the spline attaining the maximum ratio $K_{ii}/\LM_{ii}$ has untrimmed support for $C^{p-1}$ continuity, whereas for $C^0$ continuity, its support is trimmed in one direction, as in configuration~\ref{fig:trimming2d_1direction}. The same behavior is observed for the eigenfunction corresponding to $\lambda_n(K, \LM)$, although expressed as a linear combination of several bad basis functions from configuration~\ref{fig:trimming2d_1direction}.

    \begin{ex}[Configurations \ref{fig:trimming2d_angle} and \ref{fig:trimming2d_2directions}]
        \label{ex:house_c2_c3}
        As physical domain we consider the house-like structure depicted in \Cref{fig:2D_trimming_house_c2_c3} whose ridge cuts through a horizontal grid line forming two symmetric triangles and roughly reproducing configurations~\ref{fig:trimming2d_angle} and \ref{fig:trimming2d_2directions}. Dirichlet boundary conditions are strongly imposed on the bottom edge and Neumann boundary conditions on the rest of the boundary. On the one hand, configuration \ref{fig:trimming2d_2directions} is critical for the minimum ratio and produces an $\calO(\delta^{2p-2})$ dependency on the trimming parameter. On the other hand, configuration \ref{fig:trimming2d_angle} is critical for the largest ratio, which blows up if $k_2=0$. Those results are confirmed in \Cref{fig:2Dtrimming_ex4_geo2}. For $C^{p-1}$ splines, \Cref{fig:2Dtrimming_ex4_eigenk2} indicates that the eigenfunction corresponding the smallest eigenvalue is essentially a linear combination of the two bad basis functions from configuration \ref{fig:trimming2d_2directions}, while the one corresponding the largest eigenvalue is supported near the non-trimmed boundary. This last observation is reminiscent of the 1D case in \Cref{fig:1D_trimming_eigen_Cp-1}. Our discussion for the minimum ratio also holds for the $C^0$ case in \Cref{fig:2Dtrimming_ex4_eigenk0}. Moreover, the maximum ratio is indeed attained for a basis function from configuration \ref{fig:trimming2d_angle} while the actual eigenfunction is again a linear combination of basis functions from the same configuration.

        \begin{figure}[p]
            \centering
            \includegraphics[width=\textwidth]{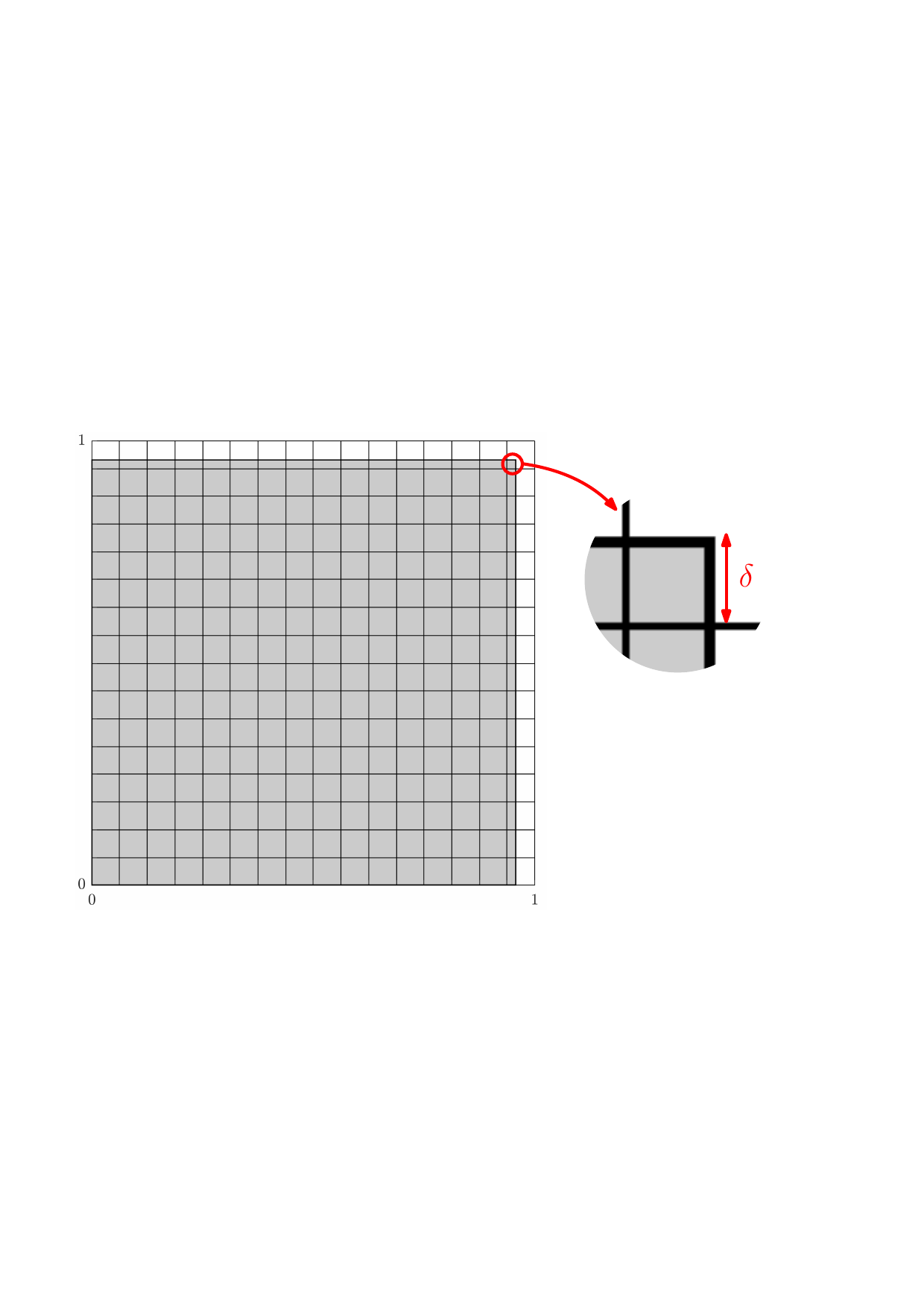}
            \caption{Fictitious (white) and trimmed (grey) domain for Example~\ref{ex:trimmed_square_2_sides}.}
            \label{fig:2D_trimming_square_domain}
        \end{figure}

        \begin{figure}[p]
            \centering
            \includegraphics[width=\textwidth]{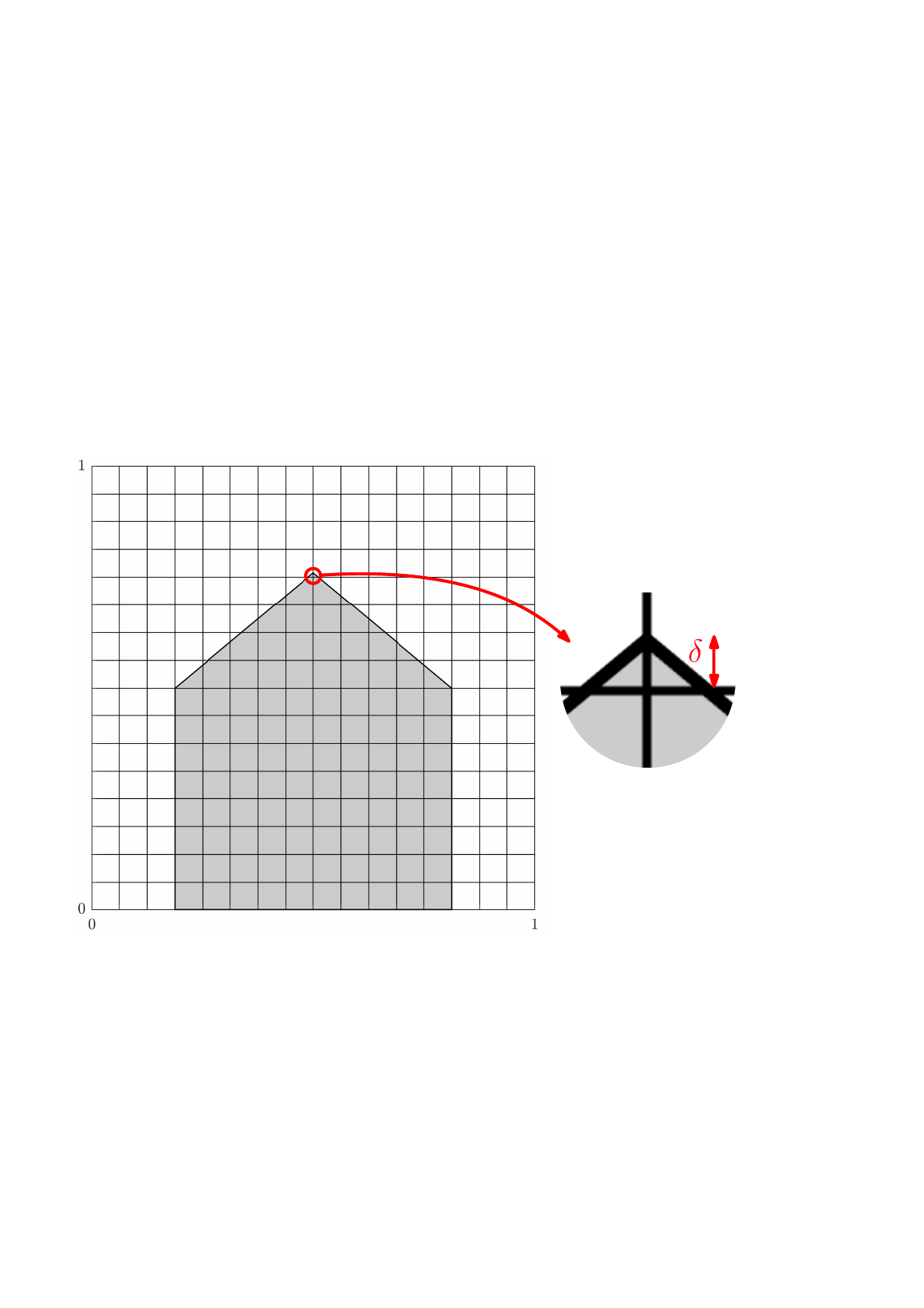}
            \caption{Fictitious (white) and trimmed (grey) domain for Example~\ref{ex:house_c2_c3}.}
            \label{fig:2D_trimming_house_c2_c3}
        \end{figure}

        \begin{figure}[p]
            \centering
            \begin{subfigure}[t]{0.49\textwidth}
                \centering
                \includegraphics[width=\textwidth]{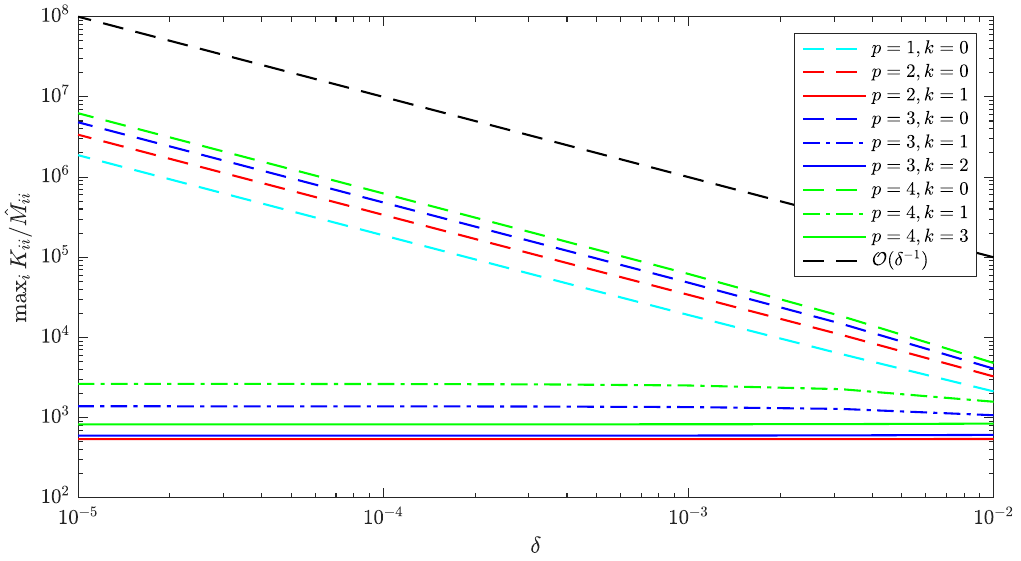}
                \caption{$\max_i K_{ii} / \LM_{ii}$.}
            \end{subfigure}
            \hfill
            \begin{subfigure}[t]{0.49\textwidth}
                \centering
                \includegraphics[width=\textwidth]{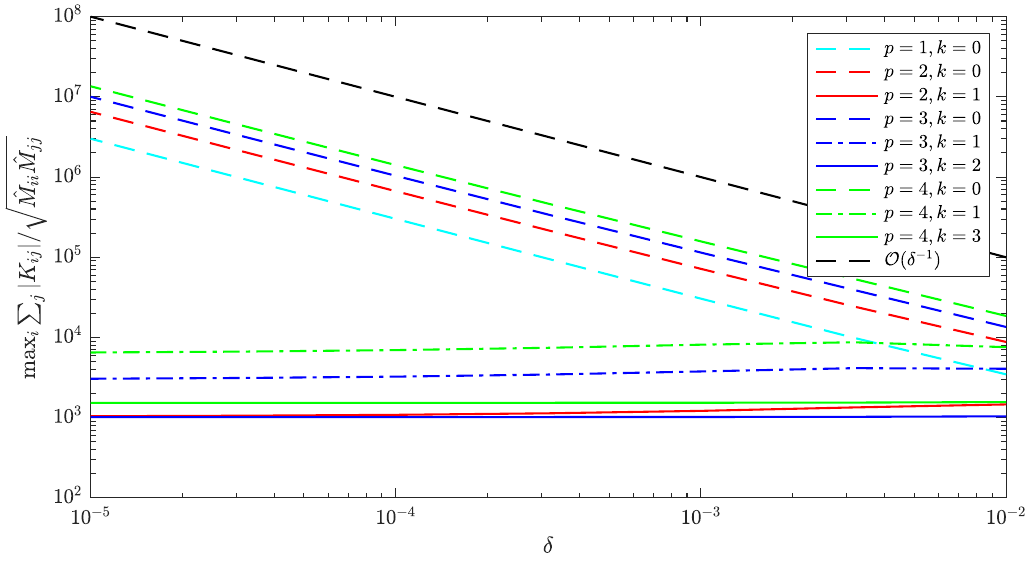}
                \caption{$\max_{i} \sum_{j} \abs{K_{ij}} / \sqrt{\LM_{ii}\LM_{jj}}$.}
            \end{subfigure}
            \hfill
            \begin{subfigure}[t]{0.49\textwidth}
                \centering
                \includegraphics[width=\textwidth]{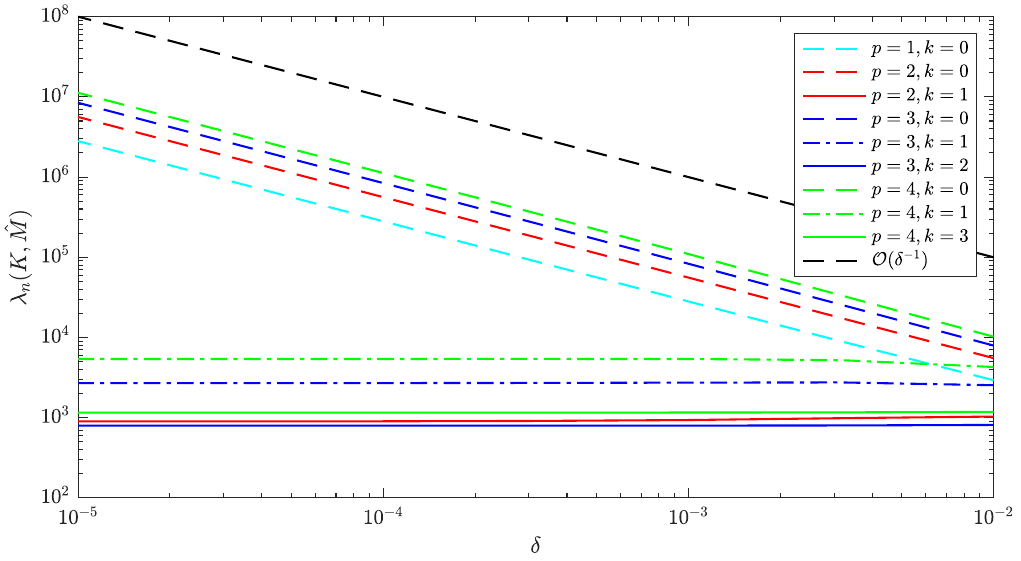}
                \caption{Largest eigenvalue with lumped mass.}
            \end{subfigure}
            \hfill
            \begin{subfigure}[t]{0.49\textwidth}
                \centering
                \includegraphics[width=\textwidth]{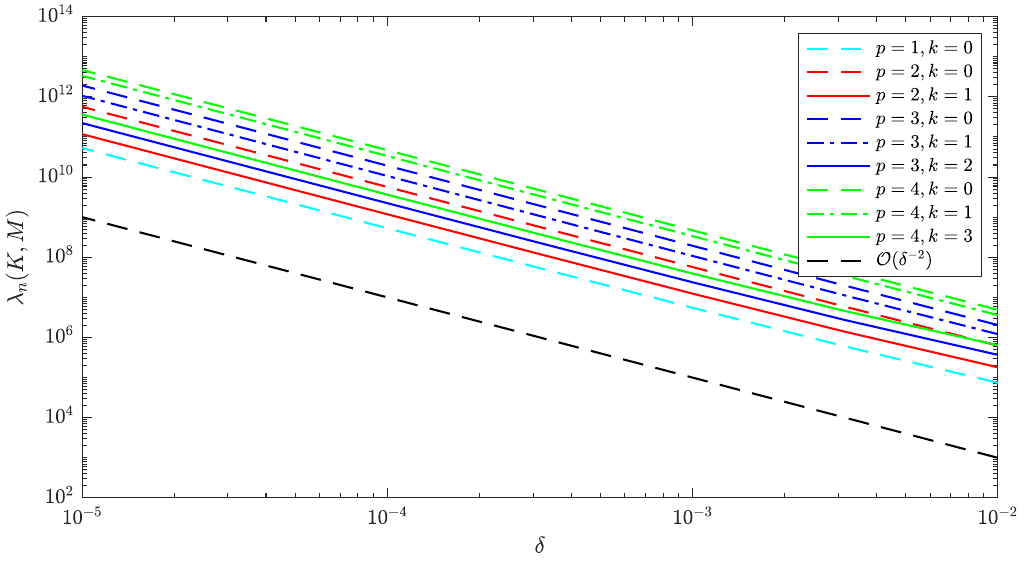}
                \caption{Largest eigenvalue with consistent mass.}
            \end{subfigure}
            \hfill
            \begin{subfigure}[t]{0.49\textwidth}
                \centering
                \includegraphics[width=\textwidth]{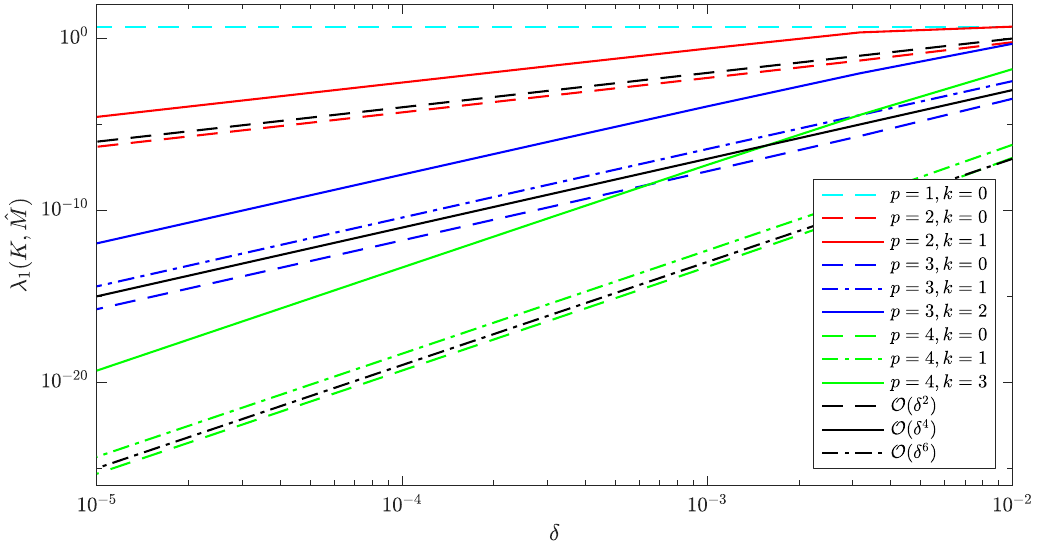}
                \caption{Smallest eigenvalue with lumped mass.}
            \end{subfigure}
            \hfill
            \begin{subfigure}[t]{0.49\textwidth}
                \centering
                \includegraphics[width=\textwidth]{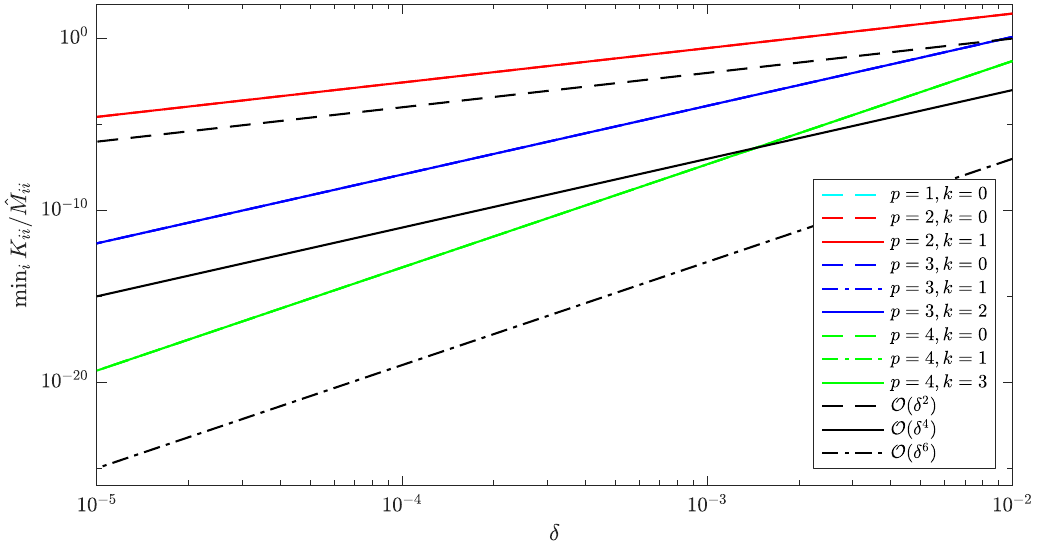}
                \caption{$\min_i K_{ii} / \LM_{ii}$.}
            \end{subfigure}
            \begin{subfigure}[t]{0.49\textwidth}
                \centering
                \includegraphics[width=0.55\textwidth]{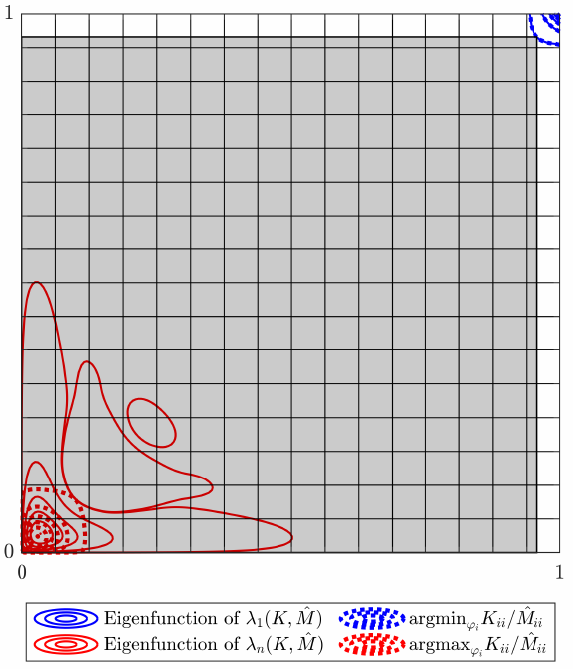}
                \caption{Degree $p = 3$ splines and $C^{p-1}$ continuity:  B-splines attaining the extreme values of $K_{ii} / \LM_{ii}$, and eigenfunctions corresponding to extreme eigenvalues, with trimming parameter $\delta=10^{-4}$.}
                \label{fig:2Dtrimming_square_eigenk2}
            \end{subfigure}
            \hfill
            \begin{subfigure}[t]{0.49\textwidth}
                \centering
                \includegraphics[width=0.55\textwidth]{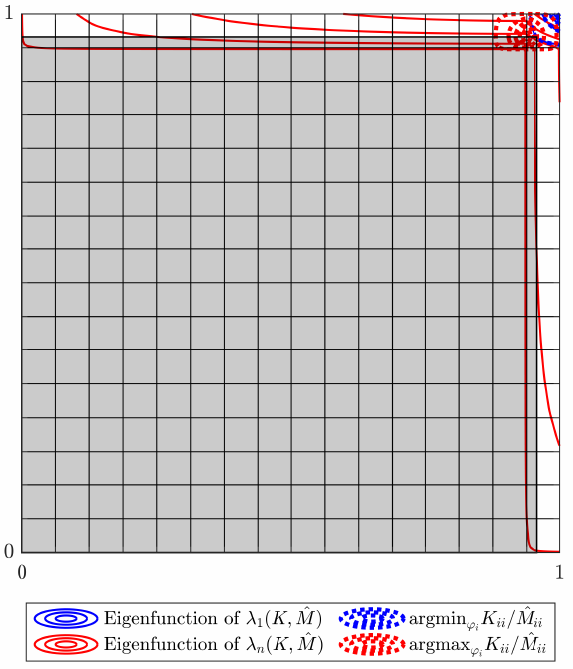}
                \caption{Degree $p = 3$ splines and $C^{0}$ continuity:  B-splines attaining the extreme values of $K_{ii} / \LM_{ii}$, and eigenfunctions corresponding to extreme eigenvalues, with trimming parameter $\delta=10^{-4}$.}
                \label{fig:2Dtrimming_square_eigenk0}
            \end{subfigure}
            \caption{Example~\ref{ex:trimmed_square_2_sides} of 2D trimming.}
            \label{fig:2Dtrimming_square}
        \end{figure}
    \end{ex}

    \begin{figure}[p]
        \centering
        \begin{subfigure}[t]{0.49\textwidth}
            \centering
            \includegraphics[width=\textwidth]{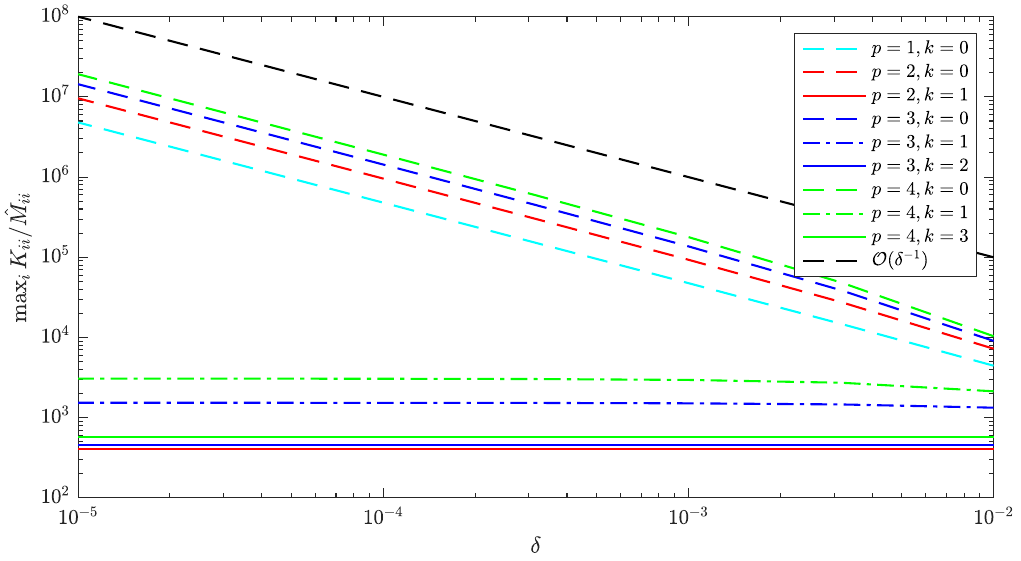}
            \caption{$\max_i K_{ii} / \LM_{ii}$.}
        \end{subfigure}
        \hfill
        \begin{subfigure}[t]{0.49\textwidth}
            \centering
            \includegraphics[width=\textwidth]{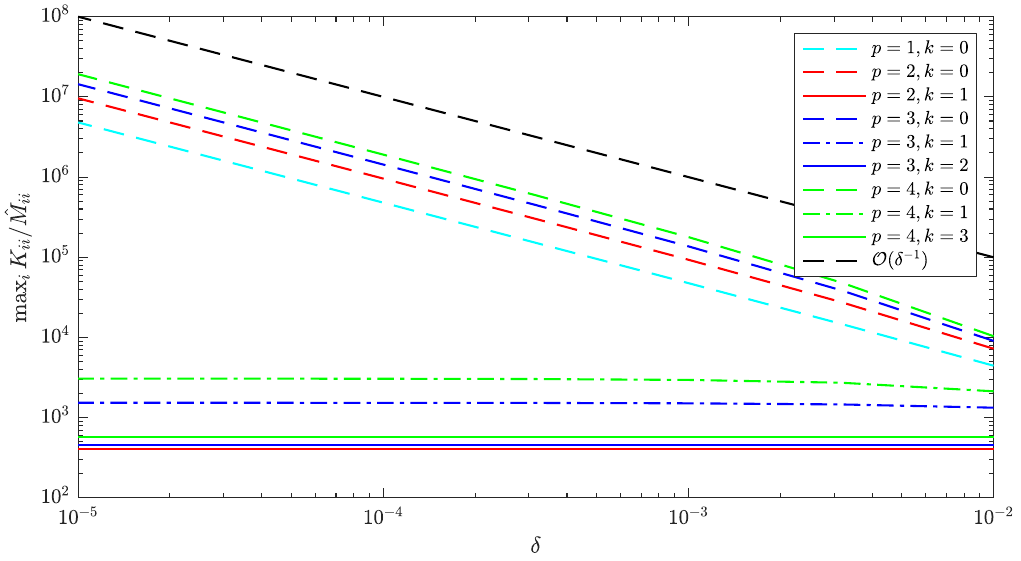}
            \caption{$\max_{i} \sum_{j} \abs{K_{ij}} / \sqrt{\LM_{ii}\LM_{jj}}$.}
        \end{subfigure}
        \hfill
        \begin{subfigure}[t]{0.49\textwidth}
            \centering
            \includegraphics[width=\textwidth]{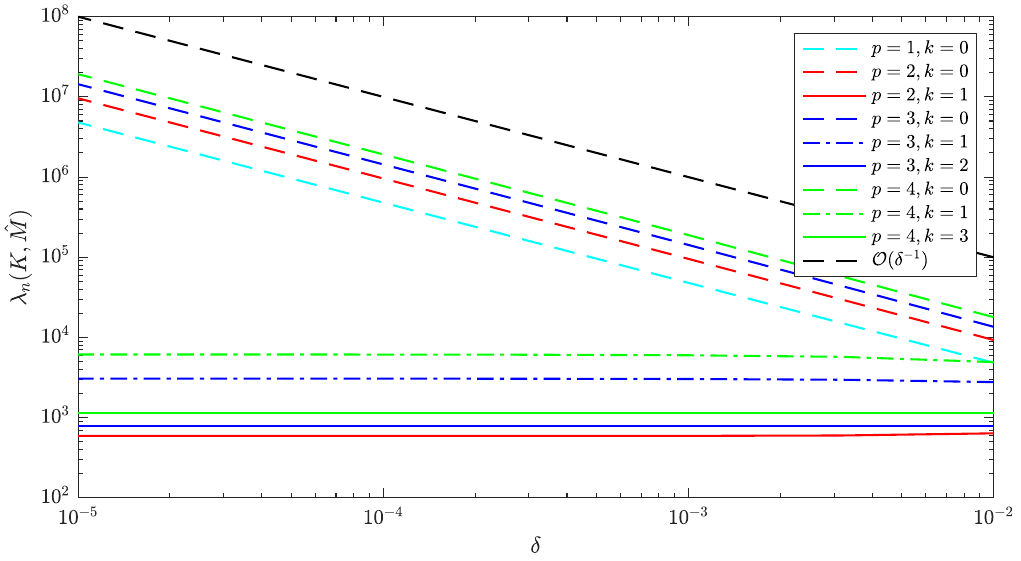}
            \caption{Largest eigenvalue with lumped mass.}
        \end{subfigure}
        \hfill
        \begin{subfigure}[t]{0.49\textwidth}
            \centering
            \includegraphics[width=\textwidth]{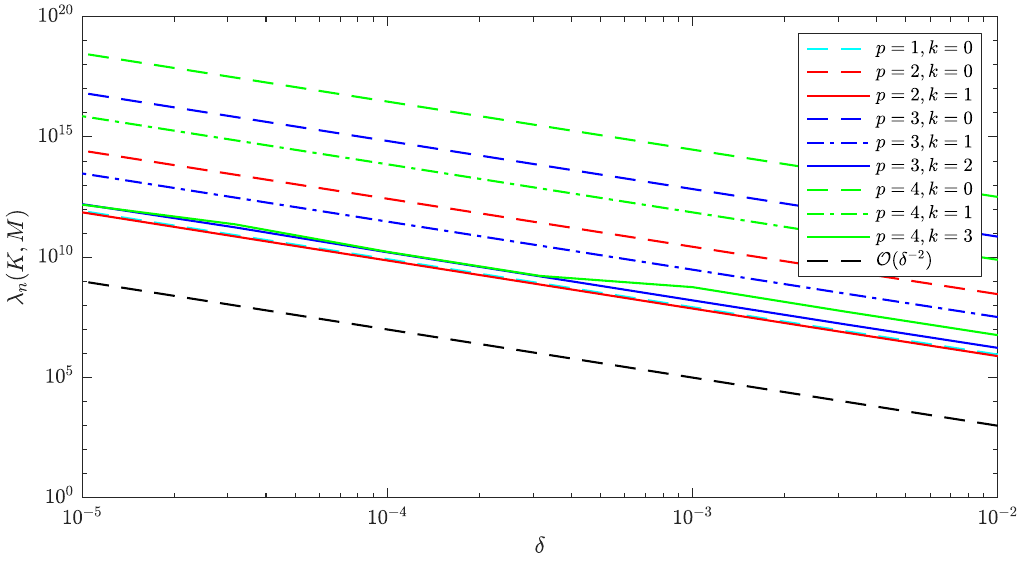}
            \caption{Largest eigenvalue with consistent mass.}
        \end{subfigure}
        \hfill
        \begin{subfigure}[t]{0.49\textwidth}
            \centering
            \includegraphics[width=\textwidth]{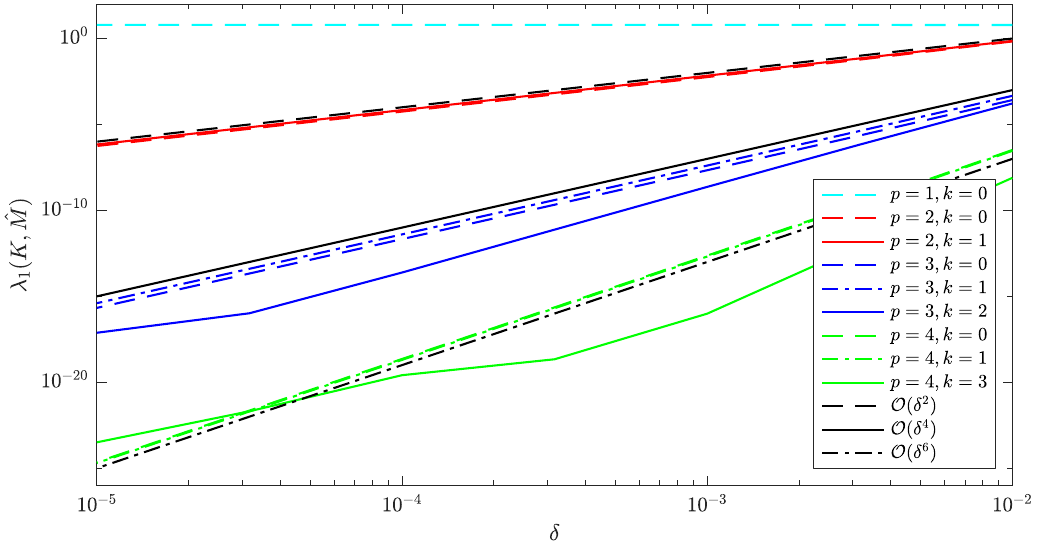}
            \caption{Smallest eigenvalue with lumped mass.}
        \end{subfigure}
        \hfill
        \begin{subfigure}[t]{0.49\textwidth}
            \centering
            \includegraphics[width=\textwidth]{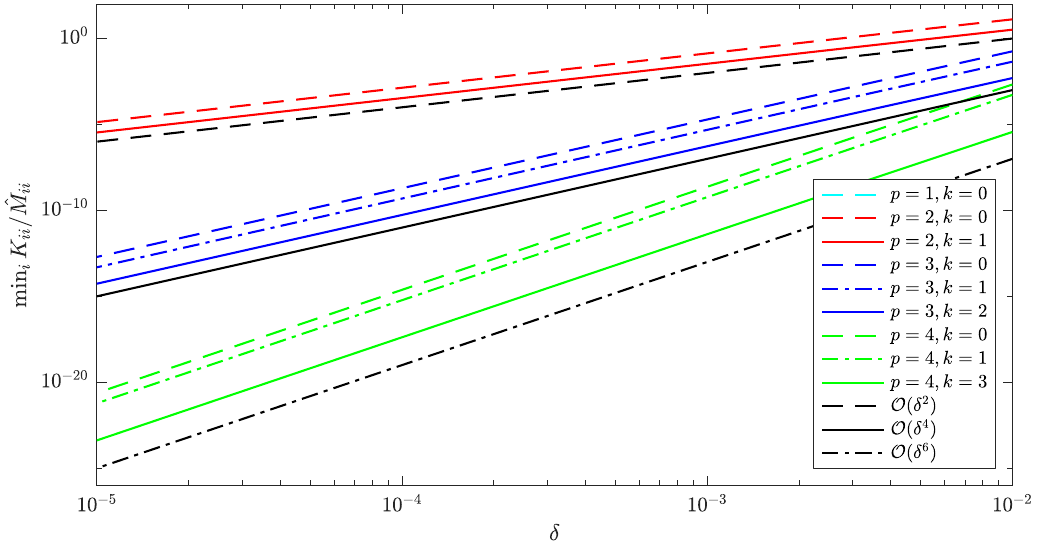}
            \caption{$\min_i K_{ii} / \LM_{ii}$.}
        \end{subfigure}
        \begin{subfigure}[t]{0.49\textwidth}
            \centering
            \includegraphics[width=0.7\textwidth]{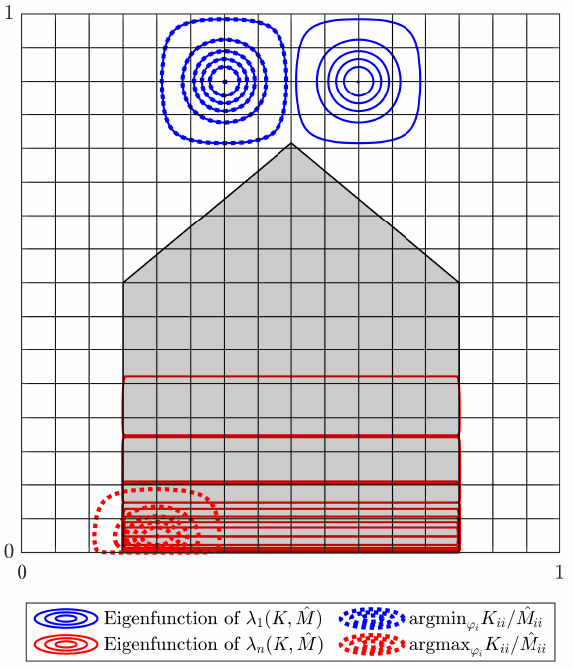}
            \caption{Degree $p = 3$ splines and $C^{p-1}$ continuity:  B-splines attaining the extreme values of $K_{ii} / \LM_{ii}$, and eigenfunctions corresponding to extreme eigenvalues, with trimming parameter $\delta=10^{-4}$.}
            \label{fig:2Dtrimming_ex4_eigenk2}
        \end{subfigure}
        \hfill
        \begin{subfigure}[t]{0.49\textwidth}
            \centering
            \includegraphics[width=0.7\textwidth]{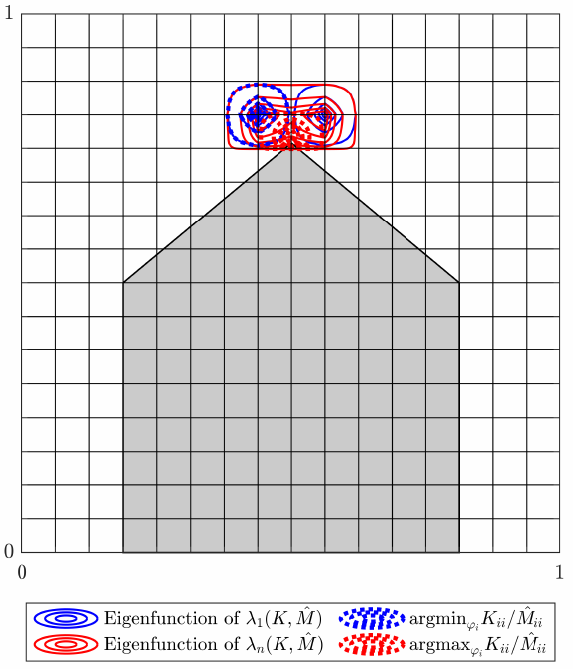}
            \caption{Degree $p = 3$ splines and $C^{0}$ continuity:  B-splines attaining the extreme values of $K_{ii} / \LM_{ii}$, and eigenfunctions corresponding to extreme eigenvalues, with trimming parameter $\delta=10^{-4}$.}
            \label{fig:2Dtrimming_ex4_eigenk0}
        \end{subfigure}
        \caption{Example~\ref{ex:house_c2_c3} of 2D trimming.}
        \label{fig:2Dtrimming_ex4_geo2}
    \end{figure}
\end{ex}

\begin{ex}[Configuration~\ref{fig:trimming2d_angle}]
    \label{ex:house_c2}
    In order to isolate configuration \ref{fig:trimming2d_angle} from \ref{fig:trimming2d_2directions}, the house's basis is enlarged such that a vertical grid line no longer cuts through the roof's ridge. Boundary conditions are imposed analogously to the previous example. While the maximum ratio remains mostly unaltered, the minimum ratio should now behave as $\calO(\delta^{p_2-2})$ for $p_2 \geq 2$ and $\calO(1)$ otherwise. Figure \ref{fig:2Dtrimming_ex5_geo2} confirms it. However, the smallest eigenvalue apparently decays much faster than its upper bound given by the minimum ratio. Figures \ref{fig:2Dtrimming_ex5_eigenk2} and \ref{fig:2Dtrimming_ex5_eigenk2_rotated} show the reason for this discrepancy: there exists a linear combination of bad basis functions that attains a significantly smaller minimum. This cannot be predicted by analyzing individual basis functions and is a clear limitation of our strategy. Although none of the basis functions vanish at the roof's ridge in the $\xi_1$ direction, it may be possible for a linear combination thereof, as visible from the perspective shown in \Cref{fig:2Dtrimming_ex5_eigenk2_rotated}. Also note that, despite of all the measures we took, the eigensolver sometimes failed to converge for the consistent mass, which is the reason for the discontinued lines in \Cref{fig:Trimming2D_ex5_lambdan_consistent}. Moreover, the computation of the smallest eigenvalue is particularly prone to numerical round-off errors. It may occasionally feature irregular jumps and its convergence is practically limited by machine precision.

\end{ex}

\begin{remark}
    The previous example shows that although the upper bound on the smallest eigenvalue behaves as predicted, the smallest eigenvalue itself may actually decay much faster. In such cases, the smallest eigenfunction is a linear combination of small basis functions. We have already provided graphical justification for this claim in \Cref{fig:2Dtrimming_ex5_eigenk2_rotated} and one may also verify it numerically. As a matter of fact, given that the upper bound
    \begin{equation}
        \label{eq: improved_bound}
        \min_{\substack{v_h \in V_h \\ v_h \neq 0}} \hat{R}(v_h) \leq \min_{\substack{v_h \in U_h \\ v_h \neq 0}} \hat{R}(v_h)
    \end{equation}
    holds for any subspace $U_h \subseteq V_h$, if the smallest eigenfunction is a linear combination of small basis functions, the upper bound obtained by choosing $U_h = V_h^S$ should accurately describe the behavior of the smallest eigenvalue. We could indeed verify this numerically on all the examples we tested and summarized the results in \Cref{app: improved_bound}. Note that computing the upper bound in \eqref{eq: improved_bound} boils down to computing the smallest eigenvalue of $(K^S,\hat{M}^S)$, where $K^S$ and $\hat{M}^S$ are the submatrices of the stiffness and lumped mass matrix corresponding to the small basis functions. Since those functions only form a small subset of all basis functions, computing the upper bound only introduces a minor computational overhead. However, the improved upper bound does not immediately yield \emph{analytical} estimates, which was the primary focus of this research.
\end{remark}

\begin{ex}[Configurations \ref{fig:trimming2d_angle} and \ref{fig:trimming2d_2directions}]
    \label{ex:rotated_square}
    We now consider as physical domain a square centered at $(0.5,0.5)$ and rotated $45$ degrees. Pure homogeneous Neumann boundary conditions are prescribed, along with the additional condition $\int_\Omega u = 0$ to ensure uniqueness of the solution. The condition $\int_\Omega u = \int_\Omega 1 \cdot u = 0$ means that we restrict the search space to the orthogonal complement of the kernel of the stiffness (spanned by the constant function $v=1$) and merely deflates the spectrum from the zero eigenvalue.

    This example and variants thereof were first introduced in \cite{de2017condition} and have become over the years a classical benchmark for stability and condition number analyses (see e.g. \cite{leidinger2020explicit,buffa2020minimal,voet2025mass}). Each corner of the square intersects a grid line forming a triangle of height $\delta$, which will serve as trimming measure (see \Cref{fig:2D_trimming_rotatedsquare}). Another grid line cuts through the triangle symmetrically, thereby roughly generating configurations \ref{fig:trimming2d_2directions} and \ref{fig:trimming2d_angle} discussed above. The former is responsible for the $\calO(\delta^{2p-2})$ decay of the smallest eigenvalue while the latter causes the largest eigenvalue to blow up if the regularity is $C^0$ in any direction, thereby generalizing Example \ref{ex:house_c2_c3}. Both trends are confirmed in \Cref{fig:2Dtrimming_rotated_square_geo1}. The smallest eigenvalue closely follows its upper bound given by the minimum ratio while the largest eigenvalue behaves (up to a constant) as the maximum ratio. Once again, the bounds for the latter are exceedingly tight even for this irregularly trimmed geometry. As in the other examples, the corresponding eigenfunctions are generally expressed as linear combinations of basis functions belonging to similar trimming configurations. However, since critical trimming configurations are essentially met all along the boundary, we do not plot the associated functions for clarity.

    \begin{figure}[p]
        \centering
        \includegraphics[width=\textwidth]{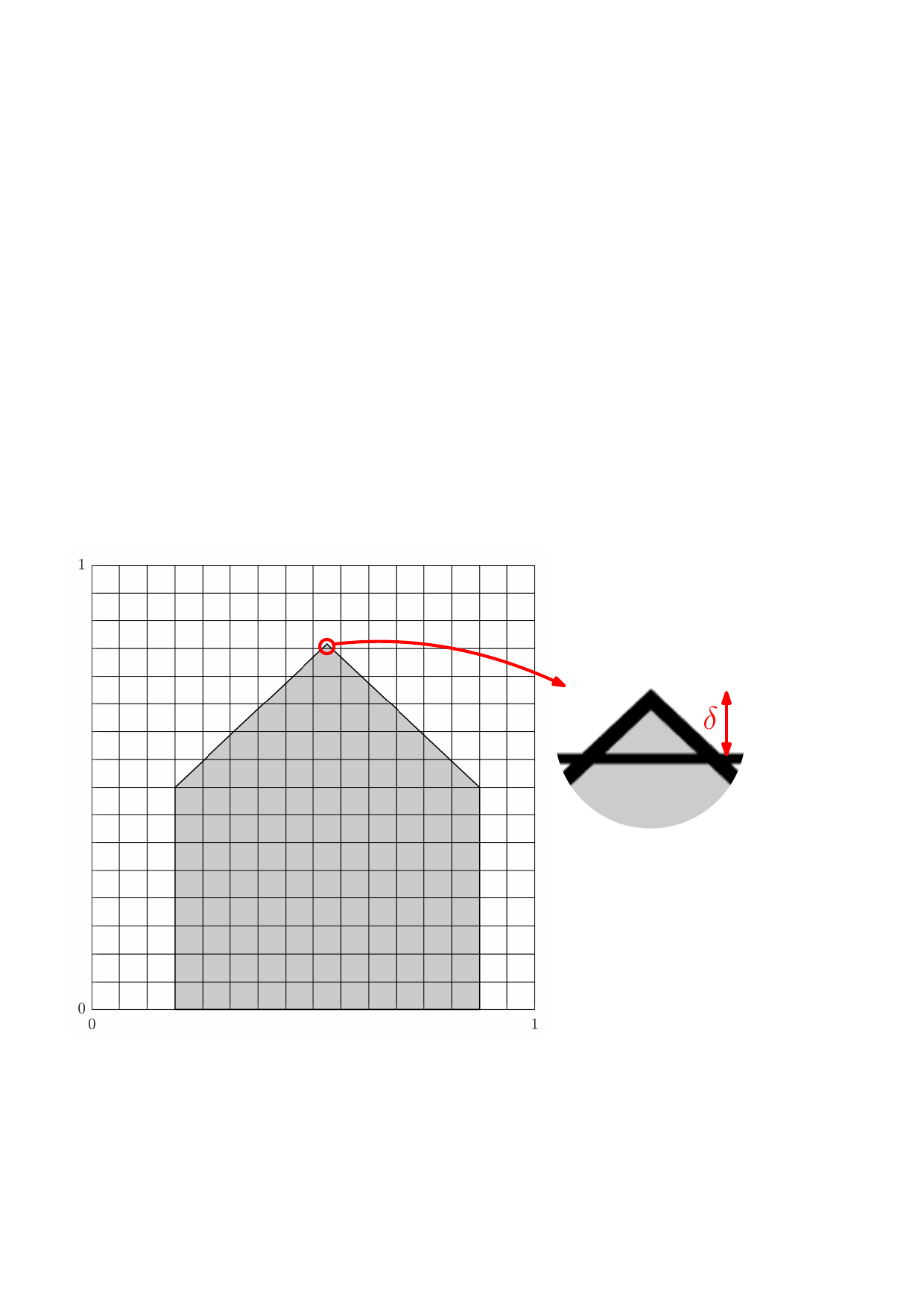}
        \caption{Fictitious (white) and trimmed (grey) domain for Example~\ref{ex:house_c2}.}
        \label{fig:2D_trimming_ex5}
    \end{figure}

    \begin{figure}[p]
        \centering
        \includegraphics[width=\textwidth]{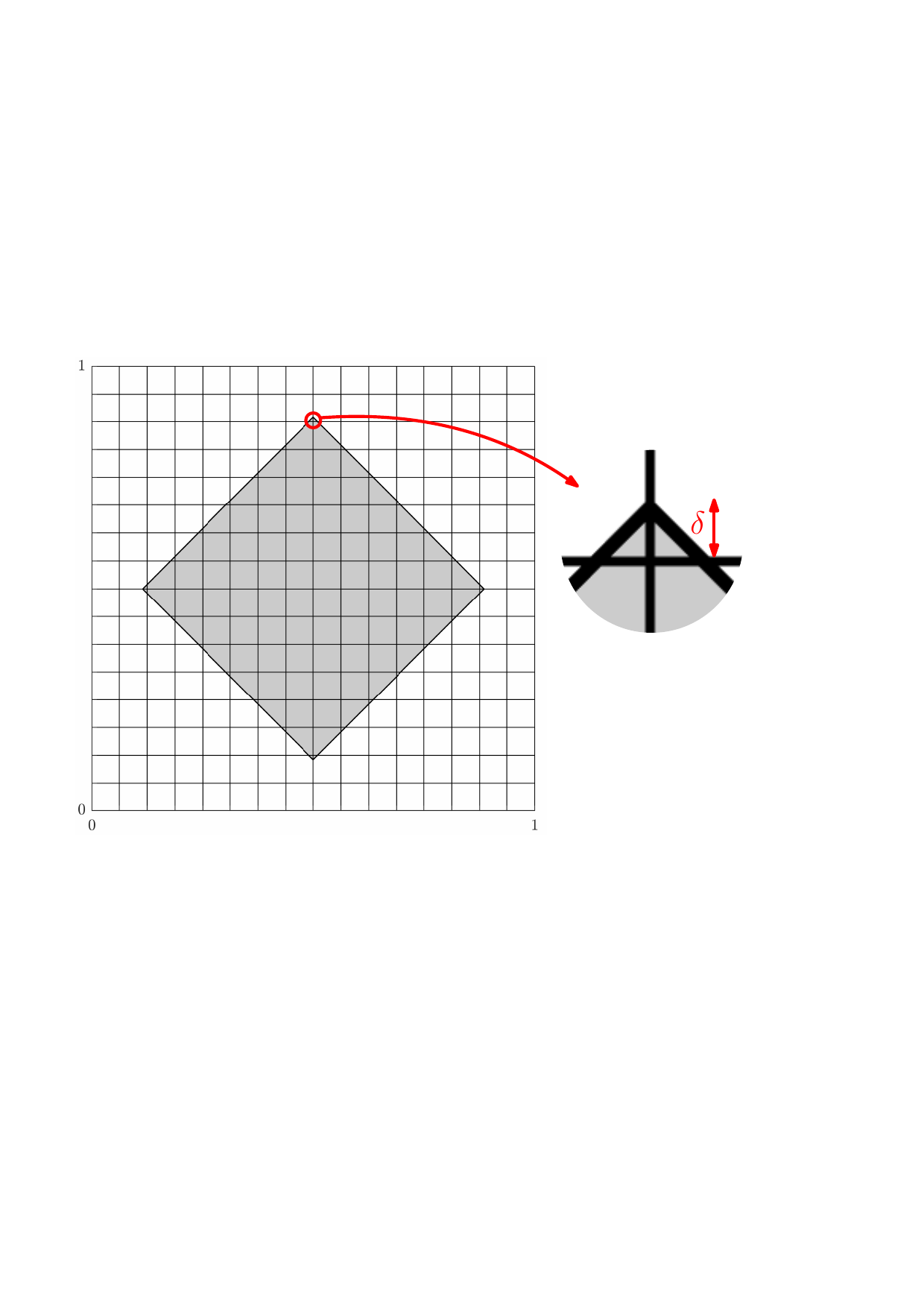}
        \caption{Fictitious (white) and trimmed (grey) domain for Example~\ref{ex:rotated_square}.}
        \label{fig:2D_trimming_rotatedsquare}
    \end{figure}

    \begin{figure}[p]
        \centering
        \begin{subfigure}[t]{0.49\textwidth}
            \centering
            \includegraphics[width=\textwidth]{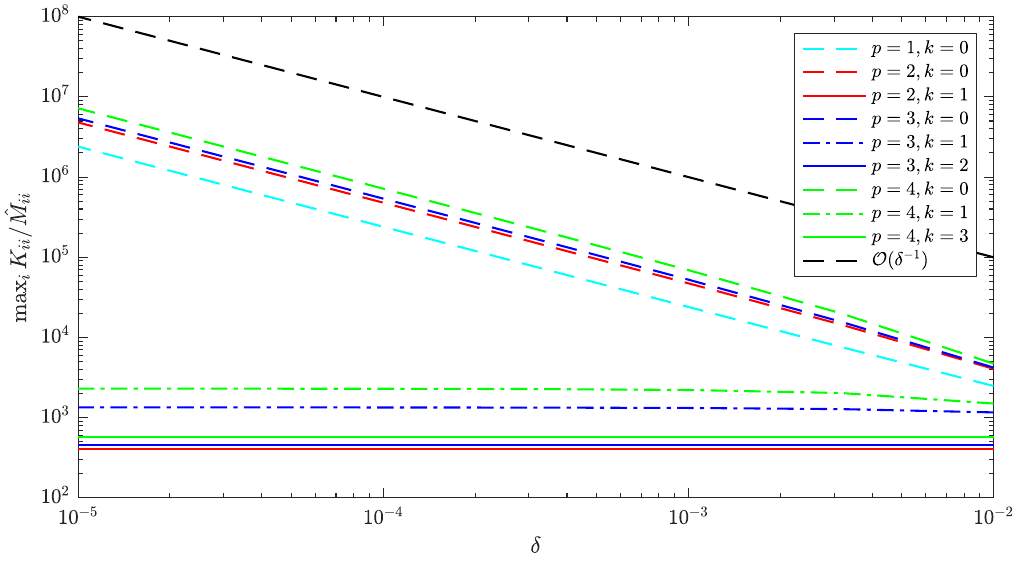}
            \caption{$\max_i K_{ii} / \LM_{ii}$.}
        \end{subfigure}
        \hfill
        \begin{subfigure}[t]{0.49\textwidth}
            \centering
            \includegraphics[width=\textwidth]{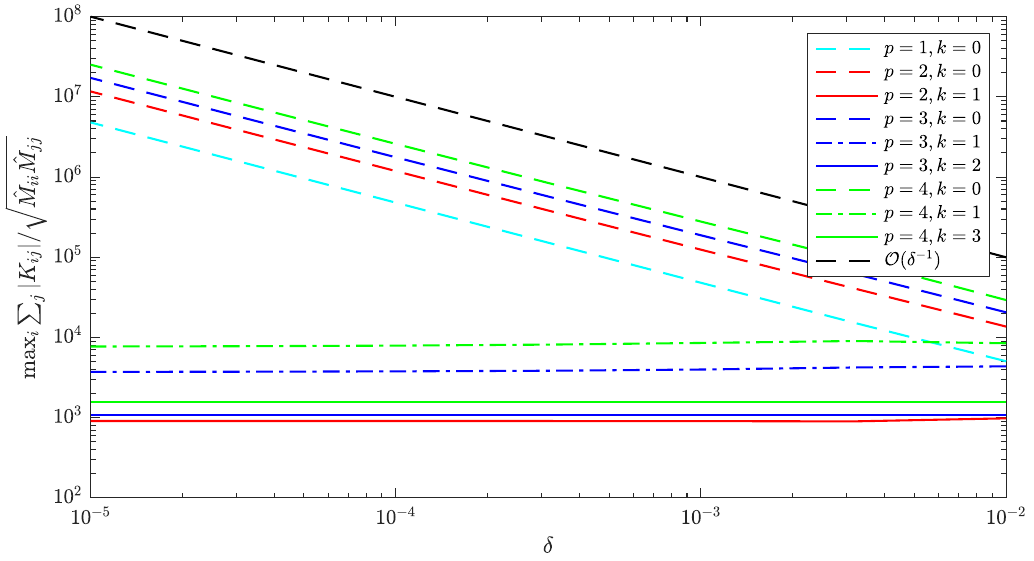}
            \caption{$\max_{i} \sum_{j} \abs{K_{ij}} / \sqrt{\LM_{ii}\LM_{jj}}$.}
        \end{subfigure}
        \hfill
        \begin{subfigure}[t]{0.49\textwidth}
            \centering
            \includegraphics[width=\textwidth]{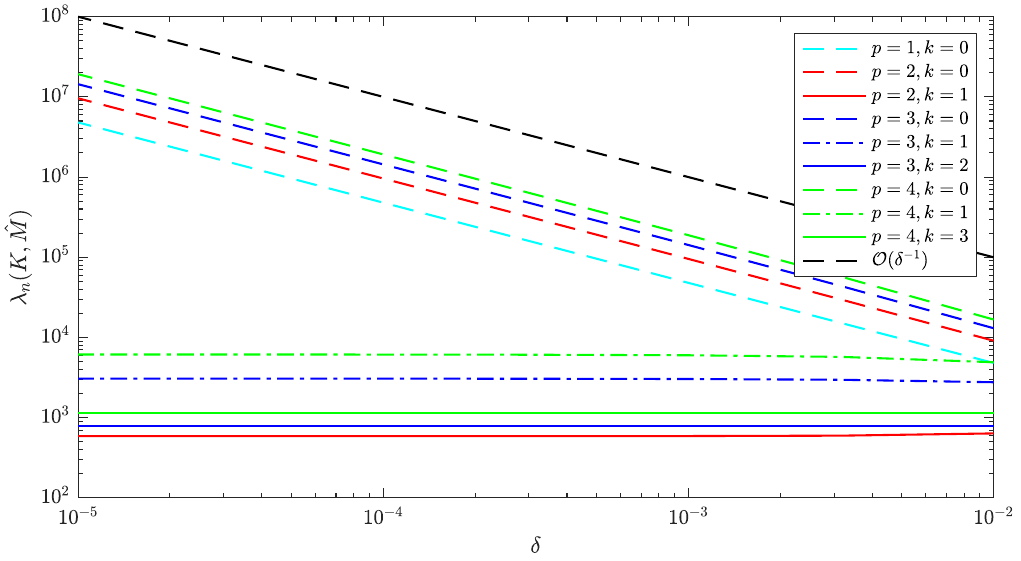}
            \caption{Largest eigenvalue with lumped mass.}
        \end{subfigure}
        \hfill
        \begin{subfigure}[t]{0.49\textwidth}
            \centering
            \includegraphics[width=\textwidth]{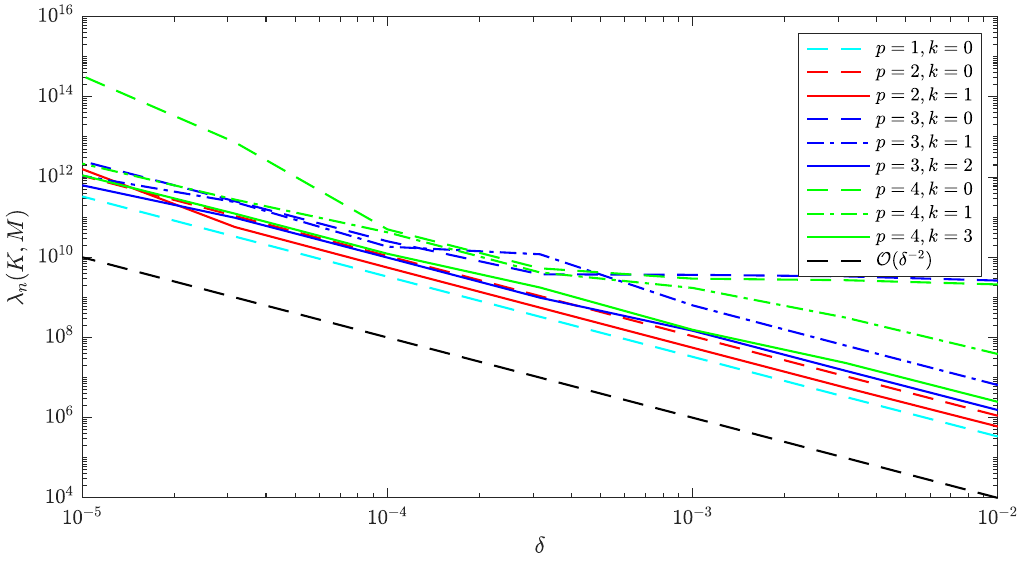}
            \caption{Largest eigenvalue with consistent mass.}
            \label{fig:Trimming2D_ex5_lambdan_consistent}
        \end{subfigure}
        \hfill
        \begin{subfigure}[t]{0.49\textwidth}
            \centering
            \includegraphics[width=\textwidth]{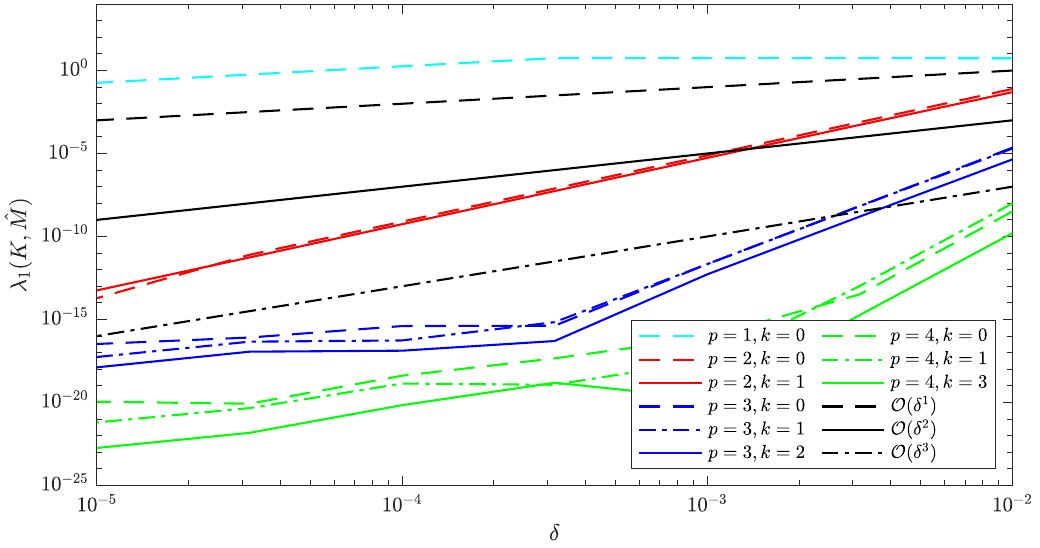}
            \caption{Smallest eigenvalue with lumped mass.}
        \end{subfigure}
        \hfill
        \begin{subfigure}[t]{0.49\textwidth}
            \centering
            \includegraphics[width=\textwidth]{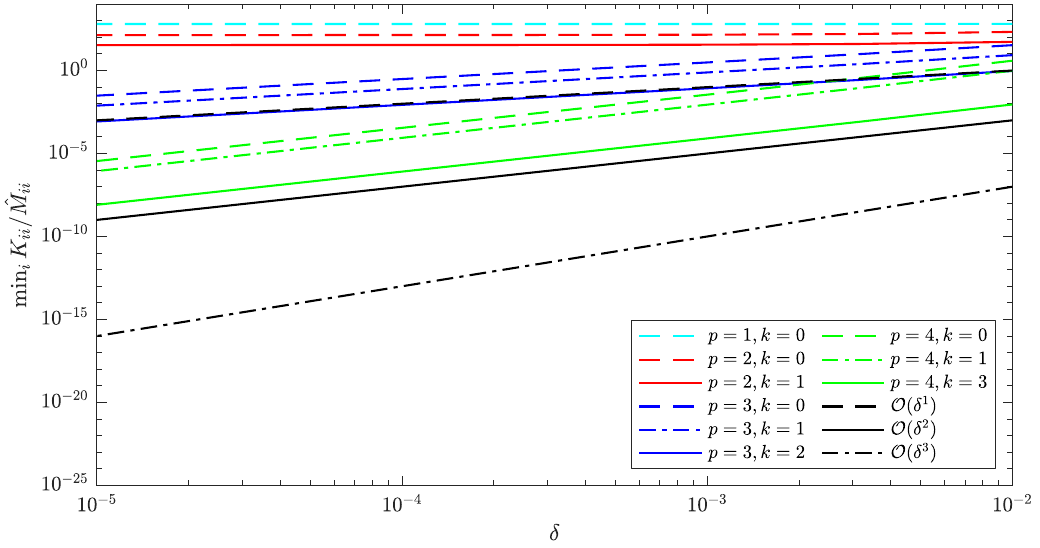}
            \caption{$\min_i K_{ii} / \LM_{ii}$.}
        \end{subfigure}
        \begin{subfigure}[t]{0.32\textwidth}
            \centering
            \includegraphics[width=0.75\textwidth]{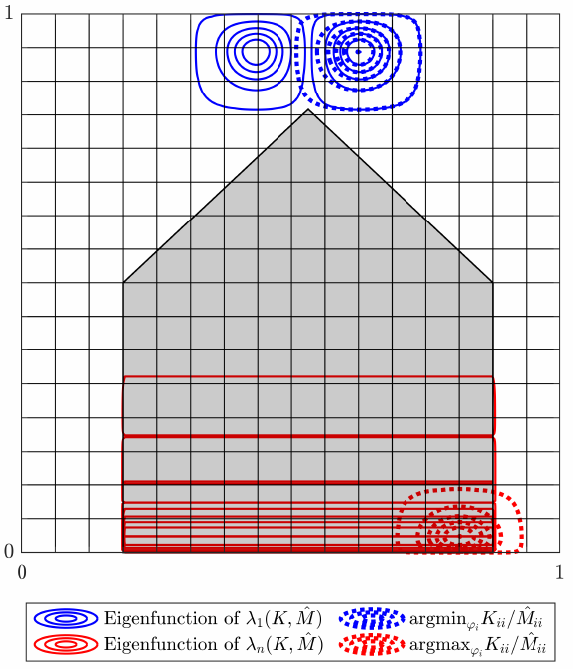}
            \caption{Degree $p = 3$ splines and $C^{p-1}$ continuity:  B-splines attaining the extreme values of $K_{ii} / \LM_{ii}$, and eigenfunctions corresponding to extreme eigenvalues, with trimming parameter $\delta=10^{-4}$.}
            \label{fig:2Dtrimming_ex5_eigenk2}
        \end{subfigure}
        \hfill
        \begin{subfigure}[t]{0.32\textwidth}
            \centering
            \includegraphics[width=0.75\textwidth]{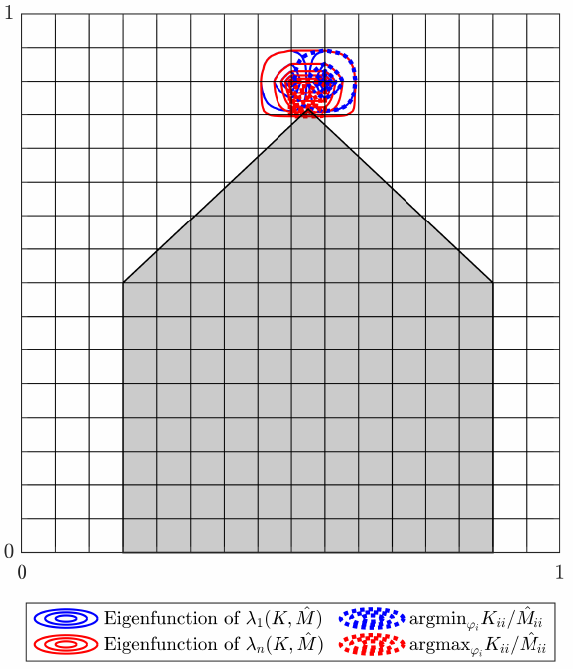}
            \caption{Degree $p = 3$ splines and $C^{0}$ continuity:  B-splines attaining the extreme values of $K_{ii} / \LM_{ii}$, and eigenfunctions corresponding to extreme eigenvalues, with trimming parameter $\delta=10^{-4}$.}
            \label{fig:2Dtrimming_ex5_eigenk0}
        \end{subfigure}
        \hfill
        \begin{subfigure}[t]{0.32\textwidth}
            \centering
            \includegraphics[width=0.9\textwidth]{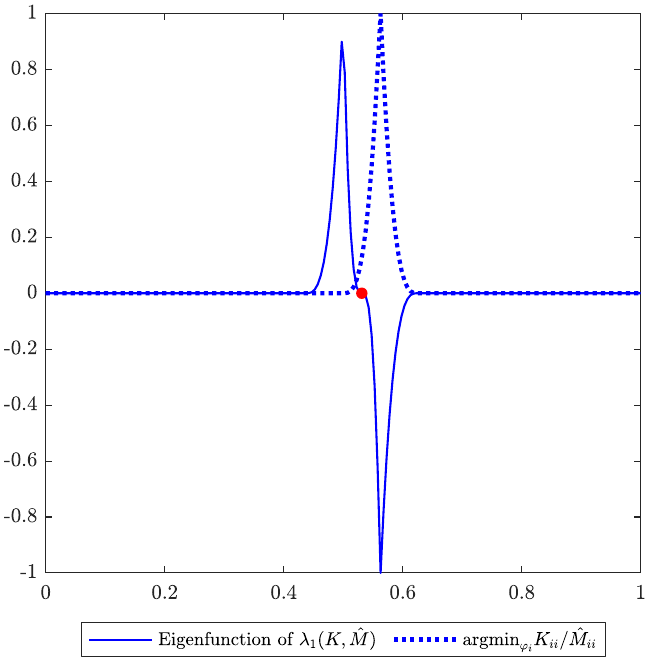}
            \caption{Degree $p = 3$ splines and $C^{0}$ continuity: rotated view of the B-spline attaining $\min_i K_{ii} / \LM_{ii}$, and of the eigenfunction corresponding to the smallest eigenvalue. The red dot locates the house tip.}
            \label{fig:2Dtrimming_ex5_eigenk2_rotated}
        \end{subfigure}
        \caption{Example~\ref{ex:house_c2} of 2D trimming.}
        \label{fig:2Dtrimming_ex5_geo2}
    \end{figure}

    \begin{figure}[h!]
        \centering
        \begin{subfigure}[t]{0.49\textwidth}
            \centering
            \includegraphics[width=\textwidth]{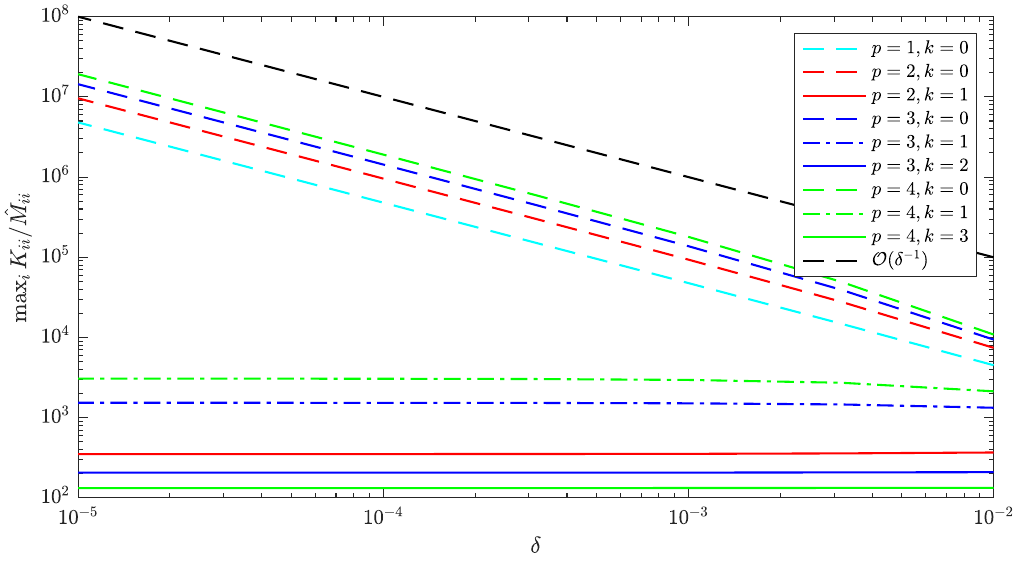}
            \caption{$\max_i K_{ii} / \LM_{ii}$.}
        \end{subfigure}
        \hfill
        \begin{subfigure}[t]{0.49\textwidth}
            \centering
            \includegraphics[width=\textwidth]{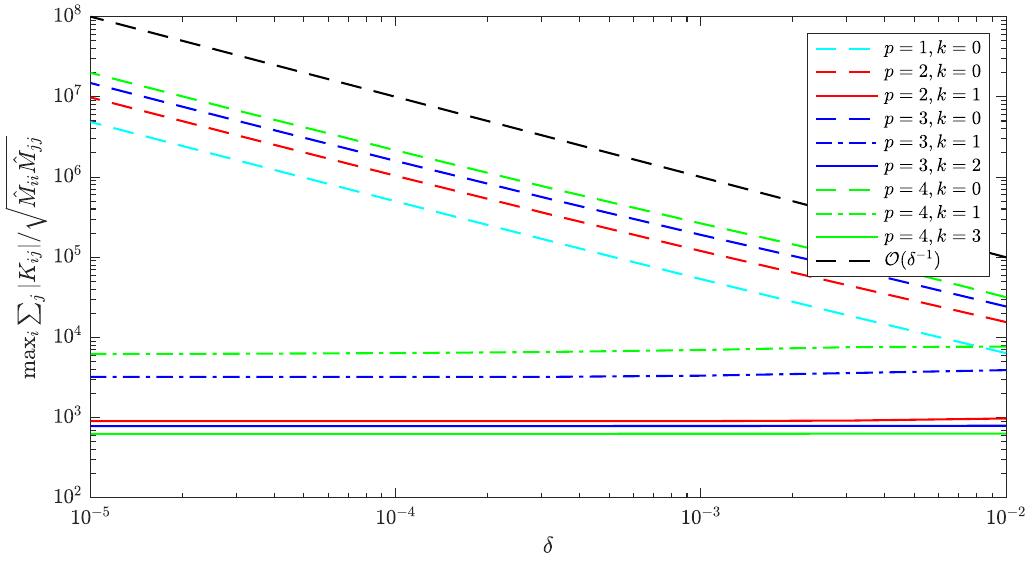}
            \caption{$\max_{i} \sum_{j} \abs{K_{ij}} / \sqrt{\LM_{ii}\LM_{jj}}$.}
        \end{subfigure}
        \hfill
        \begin{subfigure}[t]{0.49\textwidth}
            \centering
            \includegraphics[width=\textwidth]{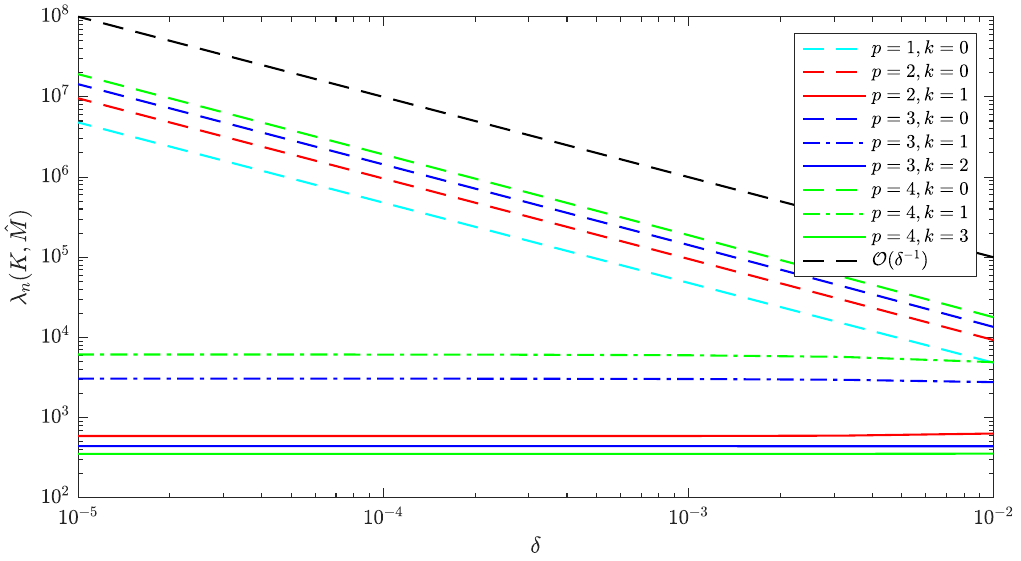}
            \caption{Largest eigenvalue with lumped mass.}
        \end{subfigure}
        \hfill
        \begin{subfigure}[t]{0.49\textwidth}
            \centering
            \includegraphics[width=\textwidth]{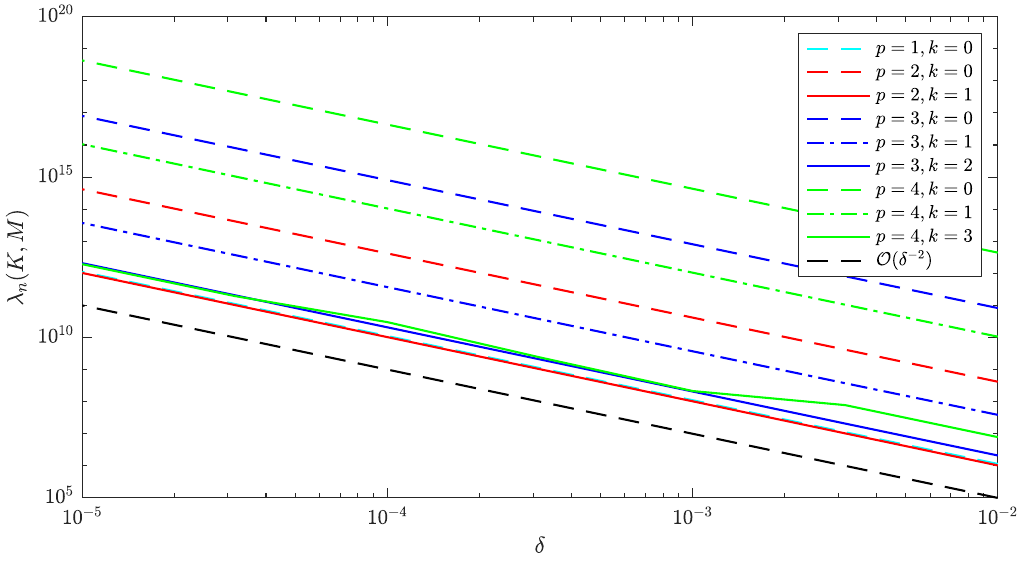}
            \caption{Largest eigenvalue with consistent mass.}
        \end{subfigure}
        \hfill
        \begin{subfigure}[t]{0.49\textwidth}
            \centering
            \includegraphics[width=\textwidth]{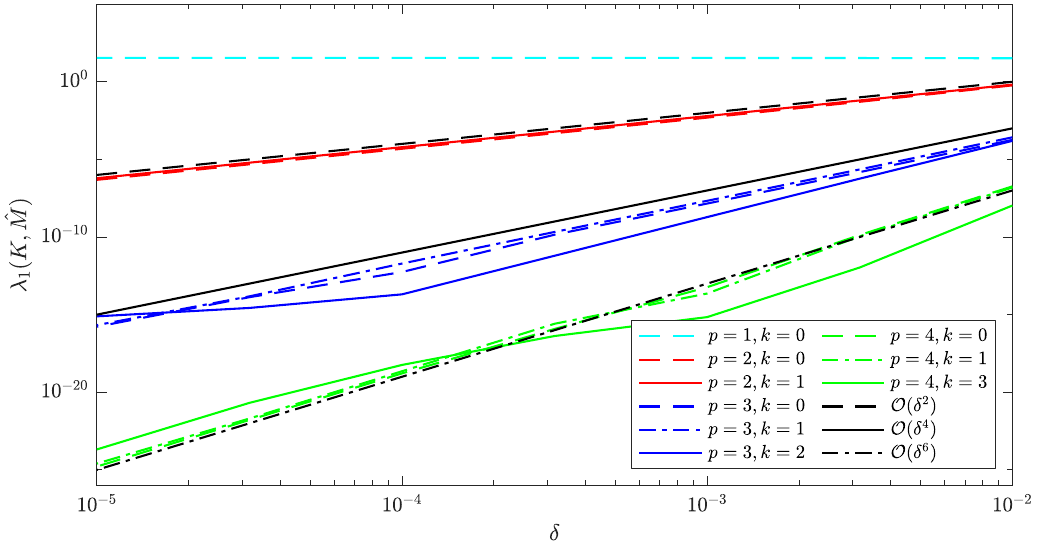}
            \caption{Smallest eigenvalue with lumped mass.}
        \end{subfigure}
        \hfill
        \begin{subfigure}[t]{0.49\textwidth}
            \centering
            \includegraphics[width=\textwidth]{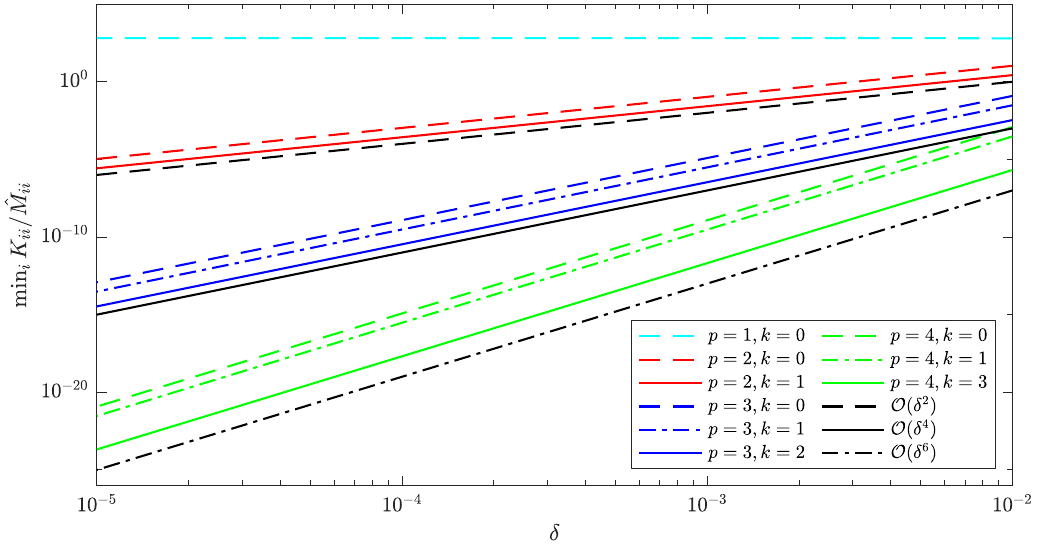}
            \caption{$\min_i K_{ii} / \LM_{ii}$.}
        \end{subfigure}
        \caption{Example~\ref{ex:rotated_square} of 2D trimming. }
        \label{fig:2Dtrimming_rotated_square_geo1}
    \end{figure}
\end{ex}

\begin{ex}[Configuration~\ref{fig:trimming2d_angle}]
    \label{ex:shifted_rotated_square}
    We now present an example generalizing \Cref{ex:house_c2} to multiple directions. The rotated square from \Cref{ex:rotated_square} is shifted along the horizontal to dissociate it from configuration \ref{fig:trimming2d_2directions}. Once again, pure homogeneous Neumann boundary conditions are prescribed, along with the additional condition $\int_\Omega u = 0$ to ensure uniqueness. The results from \Cref{ex:house_c2} are consistently recovered. However, although Examples \ref{ex:rotated_square} and \ref{ex:shifted_rotated_square} have exactly the same geometry, the positioning of the trimmed elements with respect to the grid lines heavily impacts the spectral properties. While the largest eigenvalue and minimum ratio perfectly follow our theoretical estimates, the smallest eigenvalue now actually decays much faster than its predicted rate given by the minimum ratio. The mismatch is again probably due to a linear combination of bad basis functions and is a limitation of our theory. However, the jumble in \Cref{fig:Trimming2D_rotatedshiftedsquare_lambdan} is again partly attributed to convergence issues with the eigensolver.

    \begin{figure}[p]
        \centering
        \includegraphics[height=0.3\textheight]{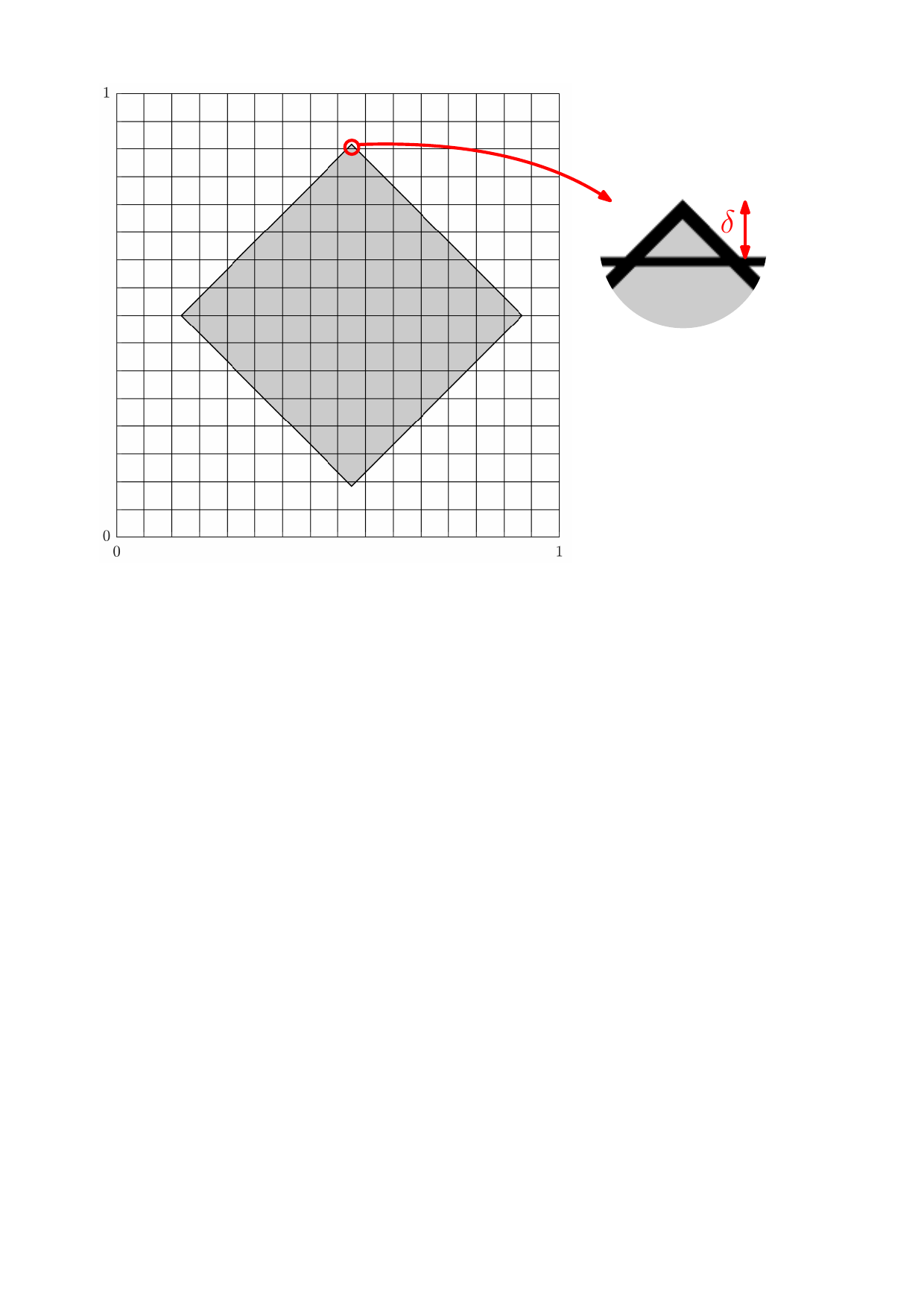}
        \caption{Fictitious (white) and trimmed (grey) domain for Example~\ref{ex:shifted_rotated_square}.}
        \label{fig:2D_trimming_rotatedshiftedsquare}
    \end{figure}

    \begin{figure}[p]
        \centering
        \begin{subfigure}[t]{0.49\textwidth}
            \centering
            \includegraphics[width=\textwidth]{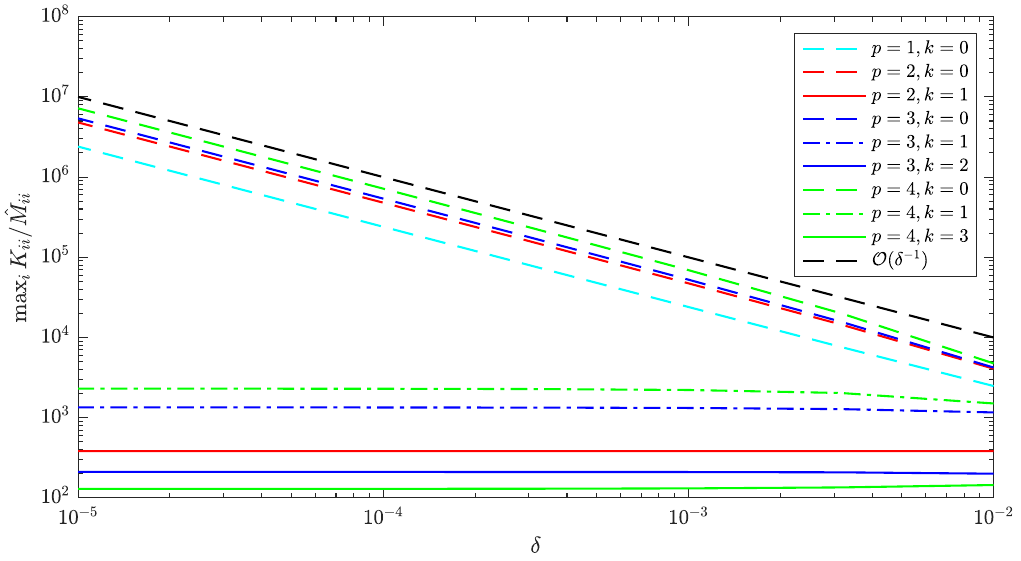}
            \caption{$\max_i K_{ii} / \LM_{ii}$.}
        \end{subfigure}
        \hfill
        \begin{subfigure}[t]{0.49\textwidth}
            \centering
            \includegraphics[width=\textwidth]{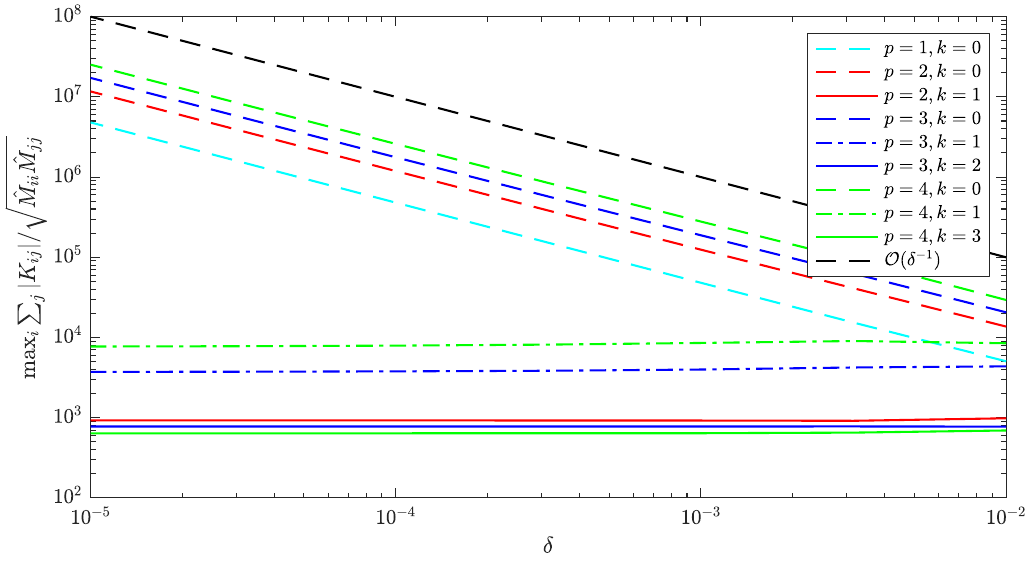}
            \caption{$\max_{i} \sum_{j} \abs{K_{ij}} / \sqrt{\LM_{ii}\LM_{jj}}$.}
        \end{subfigure}
        \hfill
        \begin{subfigure}[t]{0.49\textwidth}
            \centering
            \includegraphics[width=\textwidth]{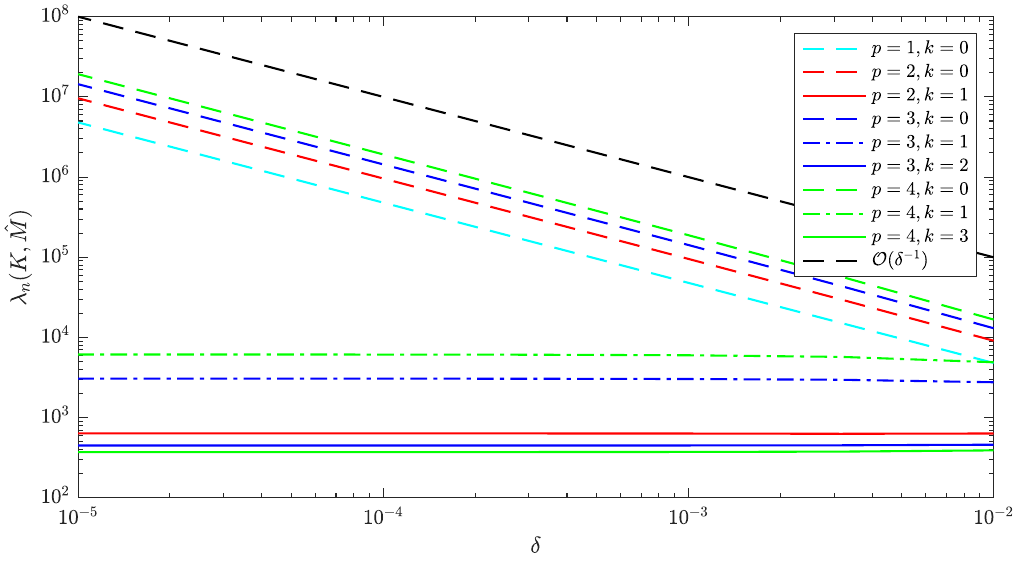}
            \caption{Largest eigenvalue with lumped mass.}
            \label{fig:Trimming2D_rotatedshiftedsquare_lambdan}
        \end{subfigure}
        \hfill
        \begin{subfigure}[t]{0.49\textwidth}
            \centering
            \includegraphics[width=\textwidth]{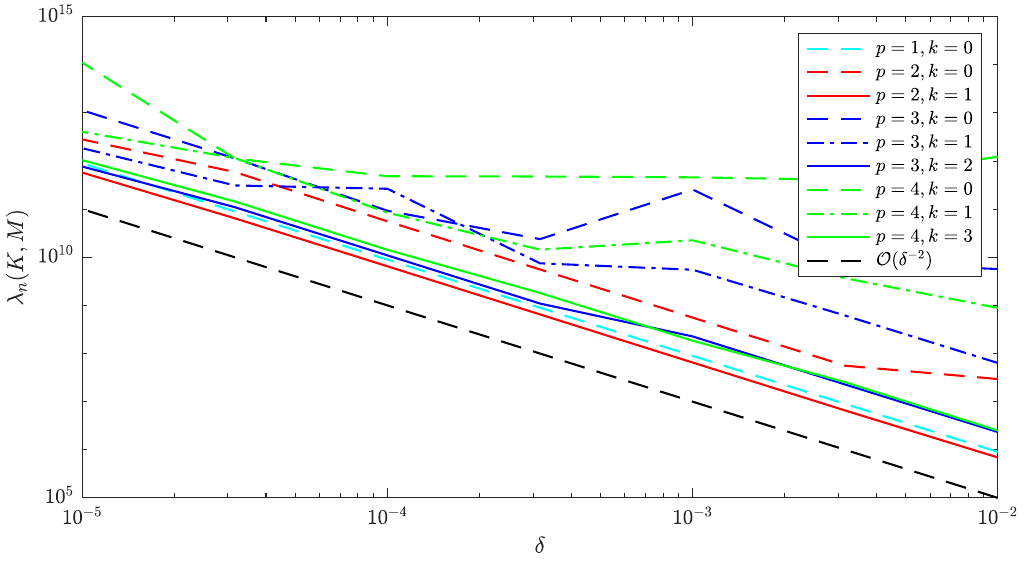}
            \caption{Largest eigenvalue with consistent mass.}
        \end{subfigure}
        \hfill
        \begin{subfigure}[t]{0.49\textwidth}
            \centering
            \includegraphics[width=\textwidth]{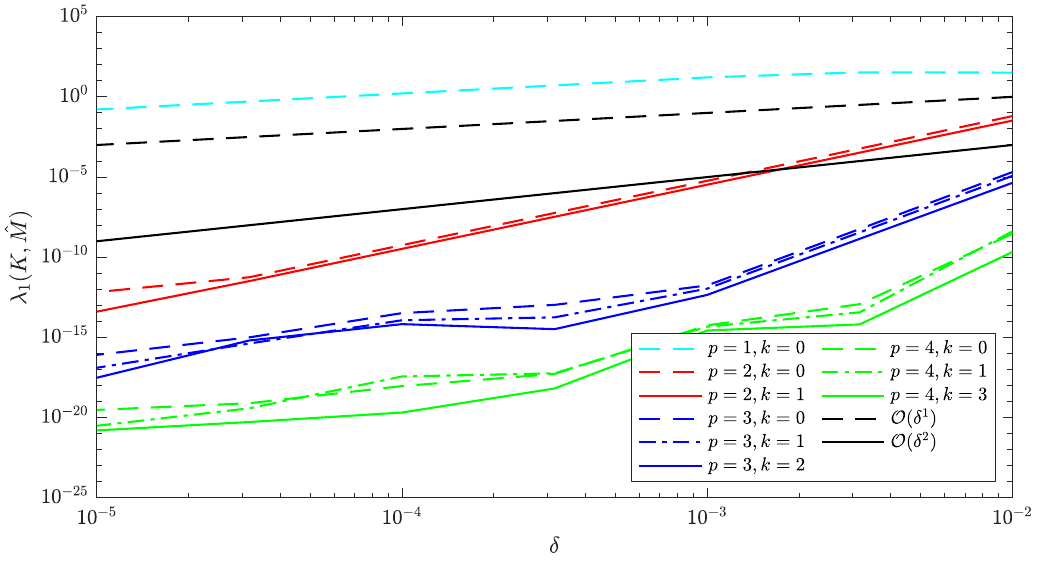}
            \caption{Smallest eigenvalue with lumped mass.}
        \end{subfigure}
        \hfill
        \begin{subfigure}[t]{0.49\textwidth}
            \centering
            \includegraphics[width=\textwidth]{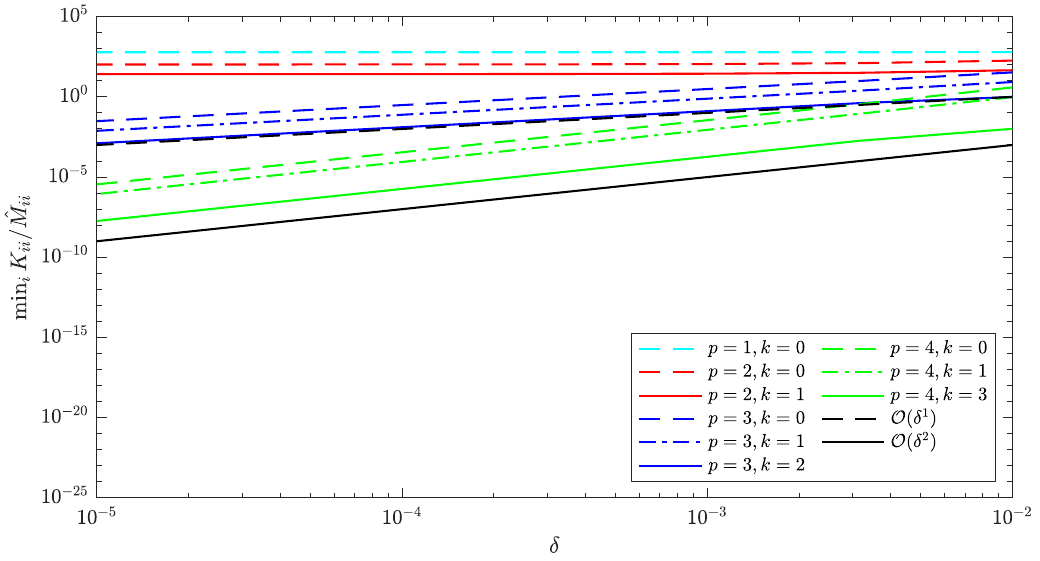}
            \caption{$\min_i K_{ii} / \LM_{ii}$.}
        \end{subfigure}
        \caption{Example~\ref{ex:shifted_rotated_square} of 2D trimming. }
        \label{fig:2Dtrimming_rotated_square_geo2}
    \end{figure}
\end{ex}

\begin{ex}[Configuration~\ref{fig:trimming2d_2directions}]
    \label{ex:curved}
    Let us now come to our last example, inspired from \cite{de2019preconditioning}, which isolates configuration~\ref{fig:trimming2d_2directions} from the two others. The physical domain is the unit square $[0,1]^2$ with a central circular hole of radius $R_{\mathrm{in}} = \sqrt{(1/5)^2 + h^2} - \delta$ and circular cut-outs of radius $R_{\mathrm{corner}} = 1/4 - \delta$ around each corner, where $h = 1/20$ is the mesh size. Several circular arcs cut through the corner of some basis functions' support thereby defining the trimming parameter $\delta$, as in \Cref{fig:2D_trimming_curved}, and resulting in a domain exclusively exhibiting trimming configuration~\ref{fig:trimming2d_2directions}. Finally, Dirichlet boundary conditions are strongly imposed on the edges aligned with those of the unit square. Since only configuration~\ref{fig:trimming2d_2directions} is active, \eqref{eq:case3_lambdan} implies that the largest eigenvalue remains bounded, regardless of the degree and continuity. Those results are confirmed in \Cref{fig:2D_trimming_curved}, where the curves for different continuities overlap. To the best of our knowledge, this is the first time a trimming configuration is exhibited for which the largest eigenvalue remains bounded, even for $C^0$ continuity. Moreover, configuration~\ref{fig:trimming2d_2directions} is responsible for a $\calO(\delta^{2p-2})$ decay of the minimum ratio, as shown in \Cref{fig:2D_trimming_curved_minratio}. The smallest eigenvalue follows the same decay rate until it approaches machine precision.

    \begin{figure}[p]
        \centering
        \includegraphics[height=0.3\textheight]{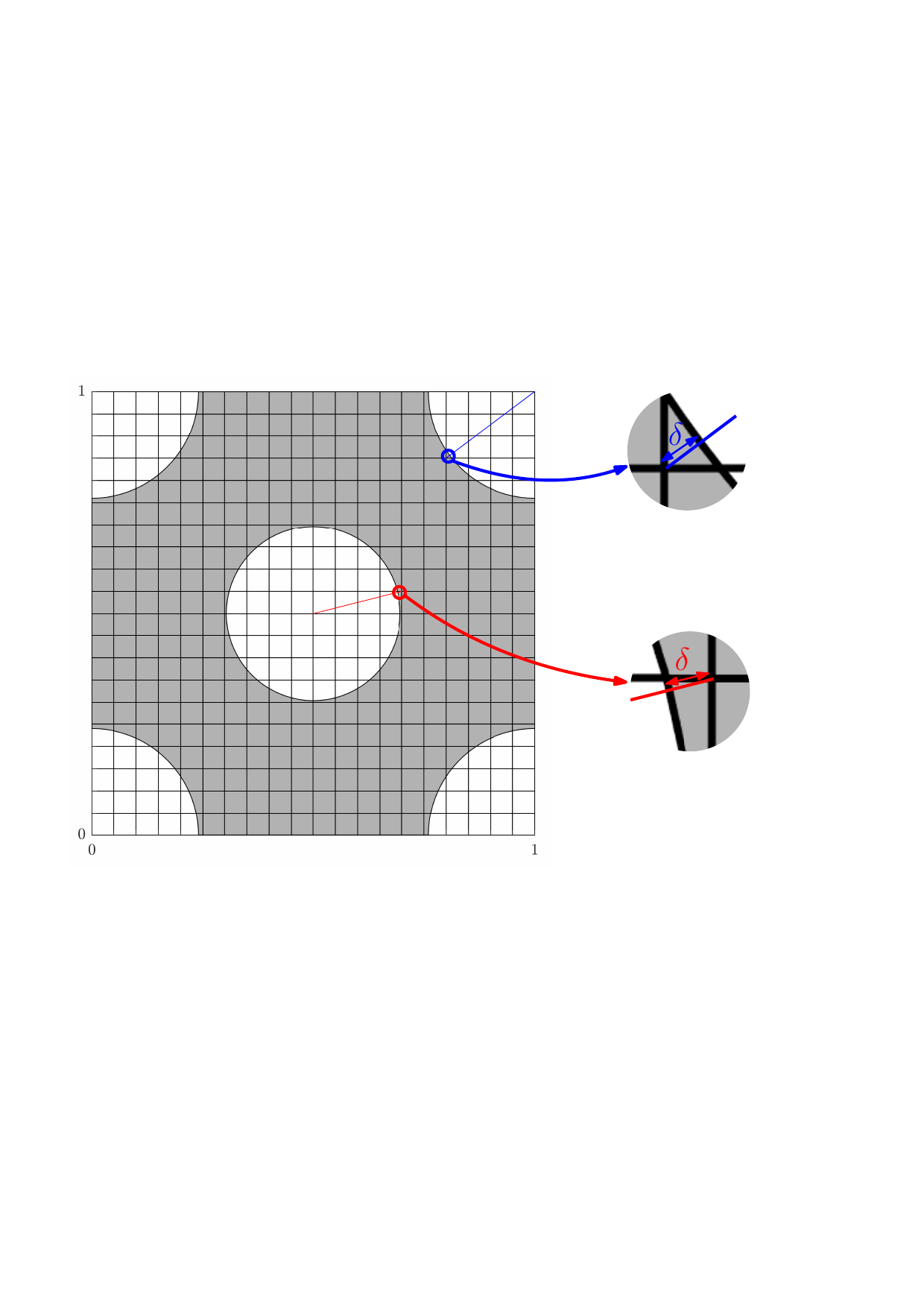}
        \caption{Fictitious (white) and trimmed (grey) domain for Example~\ref{ex:curved}.}
        \label{fig:2D_trimming_curved}
    \end{figure}

    \begin{figure}[p]
        \centering
        \begin{subfigure}[t]{0.49\textwidth}
            \centering
            \includegraphics[width=\textwidth]{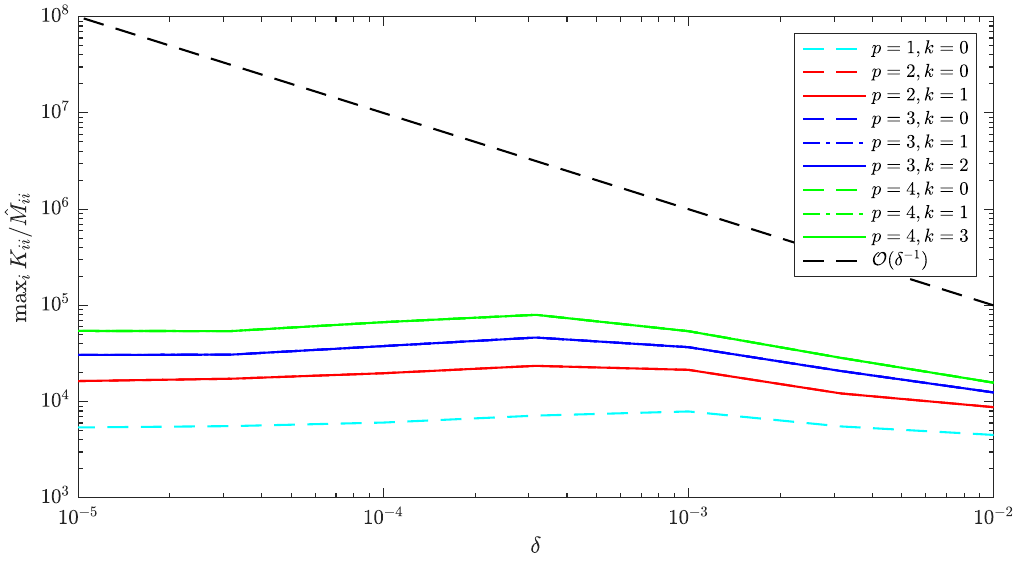}
            \caption{$\max_i K_{ii} / \LM_{ii}$.}
        \end{subfigure}
        \hfill
        \begin{subfigure}[t]{0.49\textwidth}
            \centering
            \includegraphics[width=\textwidth]{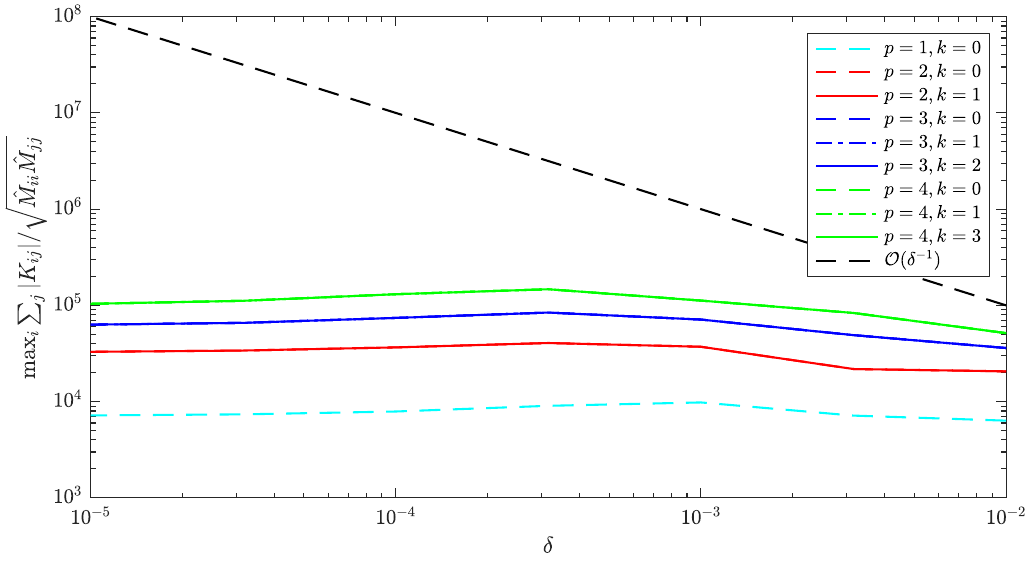}
            \caption{$\max_{i} \sum_{j} \abs{K_{ij}} / \sqrt{\LM_{ii}\LM_{jj}}$.}
        \end{subfigure}
        \hfill
        \begin{subfigure}[t]{0.49\textwidth}
            \centering
            \includegraphics[width=\textwidth]{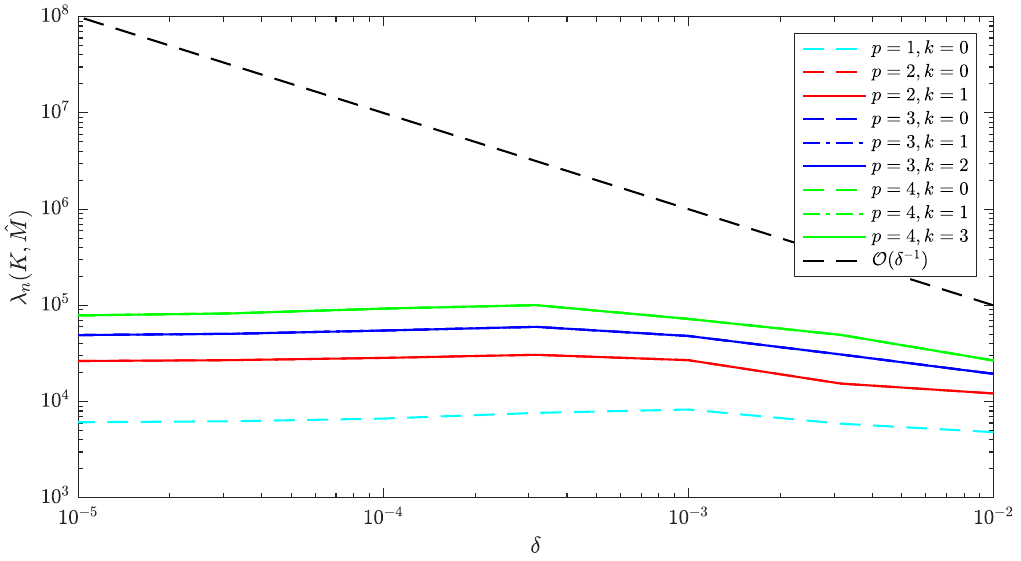}
            \caption{Largest eigenvalue with lumped mass.}
            \label{fig:Trimming2D_curved_lambdan}
        \end{subfigure}
        \hfill
        \begin{subfigure}[t]{0.49\textwidth}
            \centering
            \includegraphics[width=\textwidth]{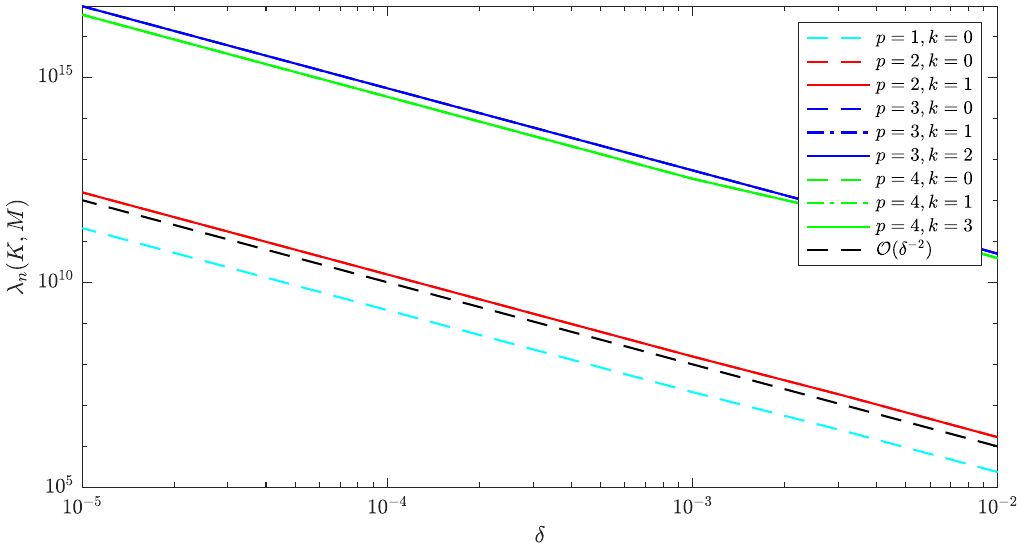}
            \caption{Largest eigenvalue with consistent mass.}
        \end{subfigure}
        \hfill
        \begin{subfigure}[t]{0.49\textwidth}
            \centering
            \includegraphics[width=\textwidth]{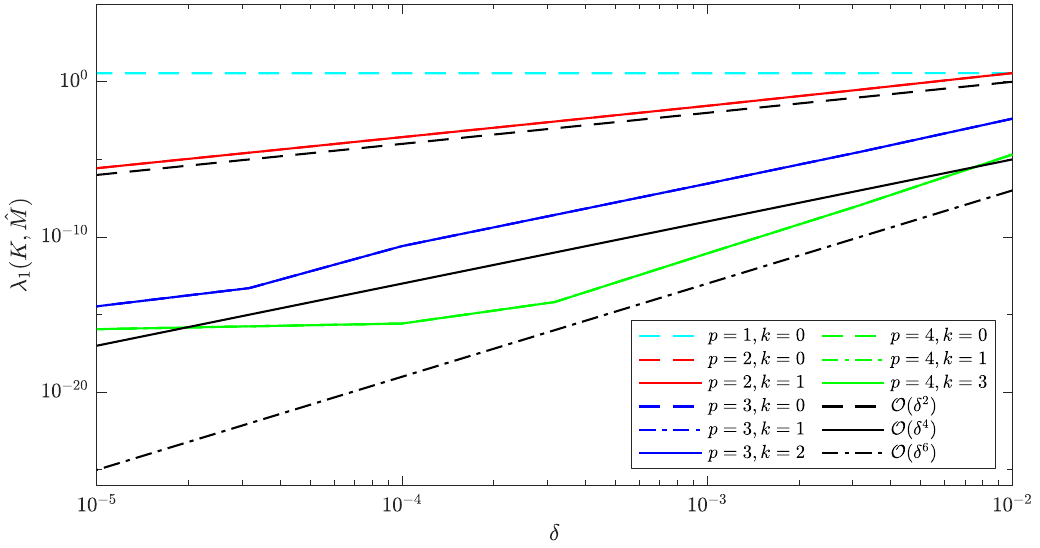}
            \caption{Smallest eigenvalue with lumped mass.}
            \label{fig:2D_trimming_curved_mineig}
        \end{subfigure}
        \hfill
        \begin{subfigure}[t]{0.49\textwidth}
            \centering
            \includegraphics[width=\textwidth]{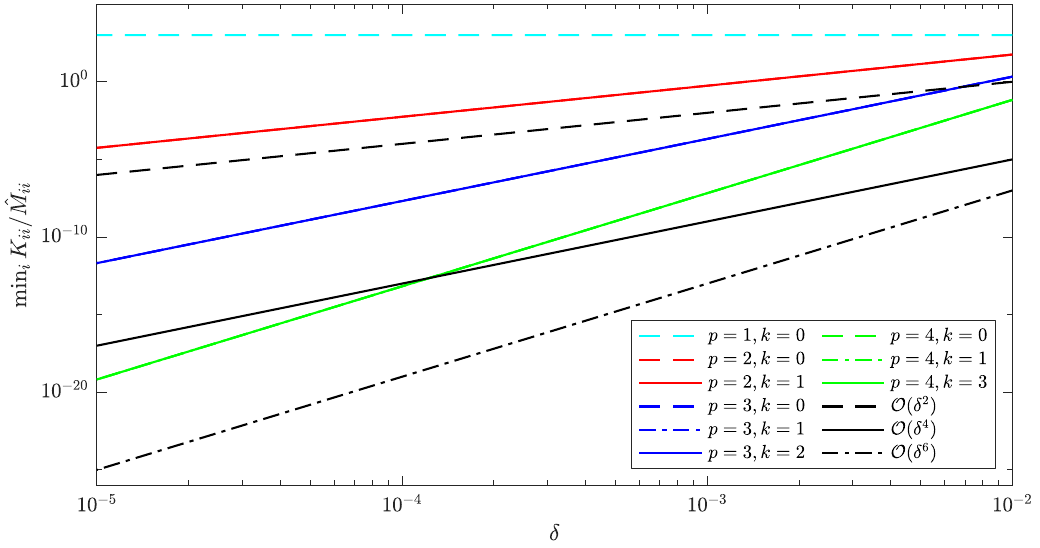}
            \caption{$\min_i K_{ii} / \LM_{ii}$.}
            \label{fig:2D_trimming_curved_minratio}
        \end{subfigure}
        \caption{Example~\ref{ex:curved} of 2D trimming.}
        \label{fig:2Dtrimming_curved_geo2}
    \end{figure}
\end{ex}

In summary, while the largest eigenvalue always behaves as the maximum ratio, the smallest eigenvalue may actually decay much faster than the minimum ratio. But in many configurations, the minimum ratio still captures the behavior of the smallest eigenvalue. Moreover, the associated eigenfunctions sometimes closely resemble basis functions. This is not entirely surprising given that the lumped mass matrix is constructed from the consistent mass in the B-spline basis. Therefore, some of the spectral properties are closely related to the basis.

\section{Conclusion}
\label{se:conclusion}
In this article, we have derived sharp lower and upper bounds on the largest generalized eigenvalue for a lumped mass approximation. Apart from their practical value in estimating the critical step size, these bounds also provide analytical estimates on the largest eigenvalue unraveling for the first time its behavior for trimmed geometries. Our estimates confirm that smoothness is the reason for the largest eigenvalue's boundedness as the trimmed elements get smaller. However, there is another side to the story. While the largest eigenvalue may remain bounded, the smallest one is driven down to zero as the trimmed elements get smaller. Although our estimates on the smallest eigenvalue were not always sharp, it converges to zero at least polynomially at a rate that depends on both the degree and trimming configuration. Thus, mass lumping on trimmed geometries might instead introduce spurious eigenvalues (and modes) in the low-frequency spectrum, similarly to those in the high frequency spectrum for a consistent mass approximation. Their impact on the solution of a transient simulation has already been investigated in \cite{voet2025stabilization}.

\begin{toappendix}
    \section{Improved upper bound on the smallest eigenvalue}
    \label{app: improved_bound}
    In this appendix, we compare the original upper bound \eqref{eq:bound_1_laplace} on the smallest eigenvalue given by the minimum ratio with the improved upper bound \eqref{eq: improved_bound} obtained by minimizing the Rayleigh quotient over all functions in $V_h^S$ (the space spanned by small basis functions). As shown in \Cref{fig:improved_bound}, contrary to the original bound, the improved one sharply captures the behavior of the smallest eigenvalue in \emph{all} cases, thereby substantiating our claim that the smallest eigenfunction is in general a linear combination of small basis functions.

    \begin{figure}[p]
        \centering
        \begin{subfigure}[t]{0.32\textwidth}
            \centering
            \includegraphics[width=\textwidth]{Trimming1D_lambda1.pdf}
        \end{subfigure}
        \hfill
        \begin{subfigure}[t]{0.32\textwidth}
            \centering
            \includegraphics[width=\textwidth]{Trimming1D_min.pdf}
            \caption{Example of 1D trimming.}
        \end{subfigure}
        \hfill
        \begin{subfigure}[t]{0.32\textwidth}
            \centering
            \includegraphics[width=\textwidth]{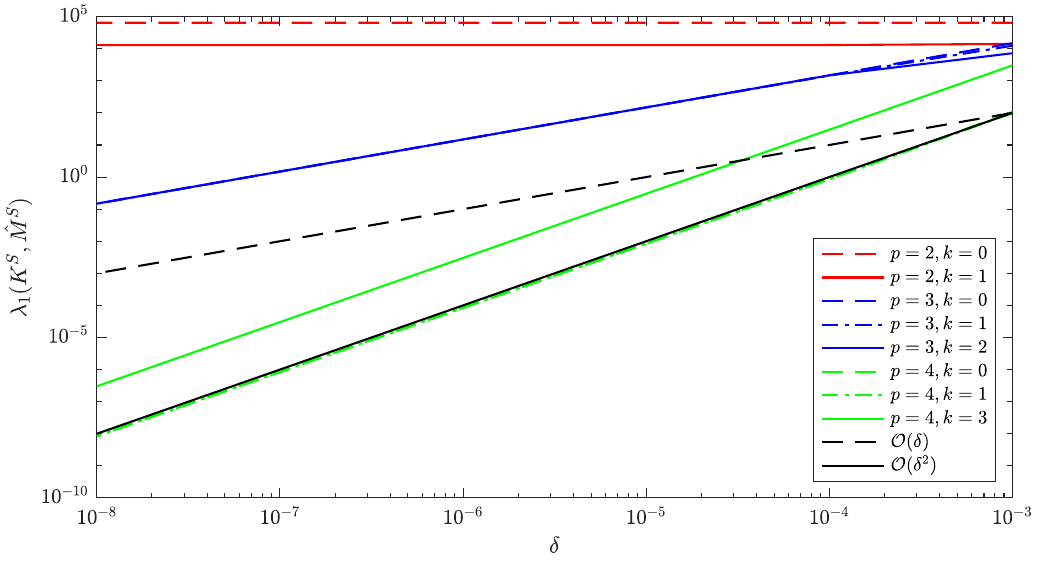}
        \end{subfigure}
        \hfill
        \begin{subfigure}[t]{0.32\textwidth}
            \centering
            \includegraphics[width=\textwidth]{Trimming2D_square_lambda1.pdf}
        \end{subfigure}
        \hfill
        \begin{subfigure}[t]{0.32\textwidth}
            \centering
            \includegraphics[width=\textwidth]{Trimming2D_square_min.pdf}
            \caption{Example~\ref{ex:trimmed_square_2_sides} of 2D trimming.}
        \end{subfigure}
        \hfill
        \begin{subfigure}[t]{0.32\textwidth}
            \centering
            \includegraphics[width=\textwidth]{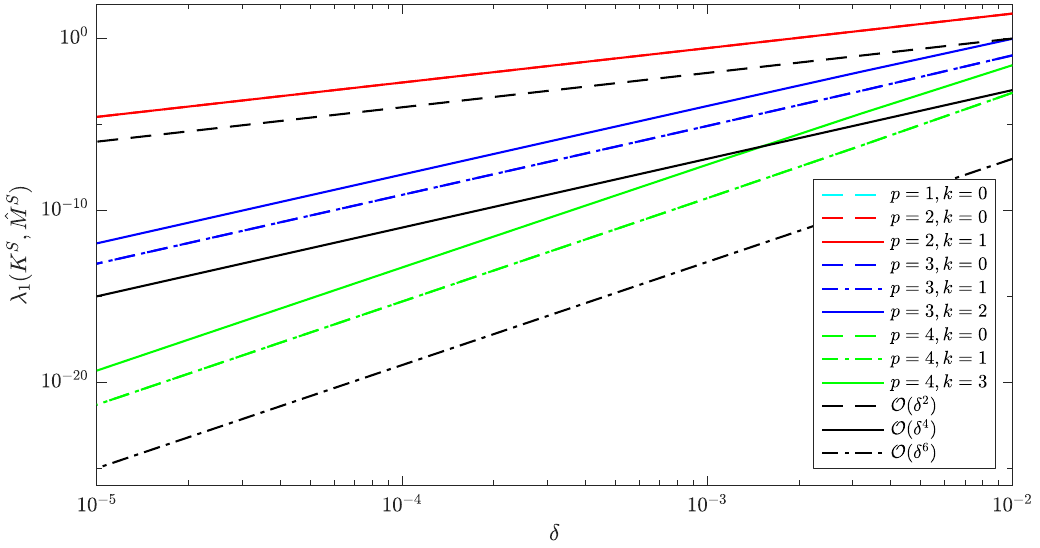}
        \end{subfigure}
        \hfill
        \begin{subfigure}[t]{0.32\textwidth}
            \centering
            \includegraphics[width=\textwidth]{Trimming2D_ex4_lambda1.pdf}
        \end{subfigure}
        \hfill
        \begin{subfigure}[t]{0.32\textwidth}
            \centering
            \includegraphics[width=\textwidth]{Trimming2D_ex4_min.pdf}
            \caption{Example~\ref{ex:house_c2_c3} of 2D trimming.}
        \end{subfigure}
        \hfill
        \begin{subfigure}[t]{0.32\textwidth}
            \centering
            \includegraphics[width=\textwidth]{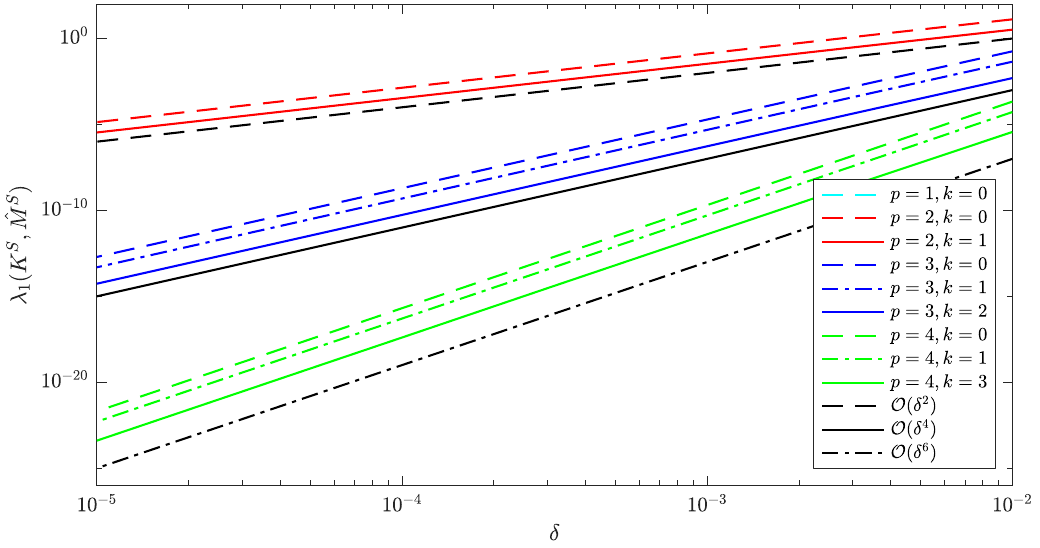}
        \end{subfigure}
        \hfill
        \begin{subfigure}[t]{0.32\textwidth}
            \centering
            \includegraphics[width=\textwidth]{Trimming2D_ex5_lambda1.pdf}
        \end{subfigure}
        \hfill
        \begin{subfigure}[t]{0.32\textwidth}
            \centering
            \includegraphics[width=\textwidth]{Trimming2D_ex5_min.pdf}
            \caption{Example~\ref{ex:house_c2} of 2D trimming.}
        \end{subfigure}
        \hfill
        \begin{subfigure}[t]{0.32\textwidth}
            \centering
            \includegraphics[width=\textwidth]{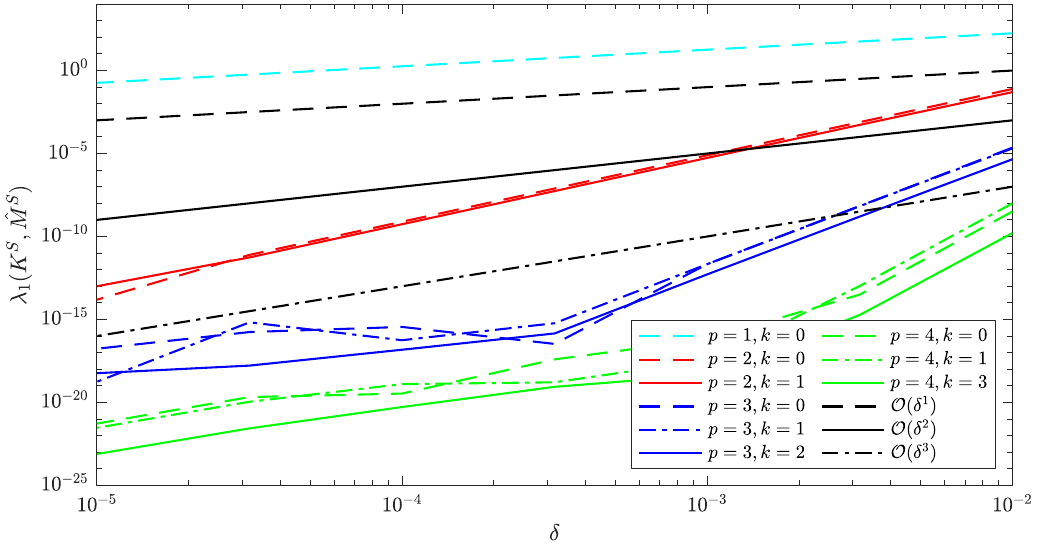}
        \end{subfigure}
        \hfill
        \begin{subfigure}[t]{0.32\textwidth}
            \centering
            \includegraphics[width=\textwidth]{Trimming2D_rotatedsquare_lambda1.pdf}
        \end{subfigure}
        \hfill
        \begin{subfigure}[t]{0.32\textwidth}
            \centering
            \includegraphics[width=\textwidth]{Trimming2D_rotatedsquare_min.pdf}
            \caption{Example~\ref{ex:rotated_square} of 2D trimming.}
        \end{subfigure}
        \hfill
        \begin{subfigure}[t]{0.32\textwidth}
            \centering
            \includegraphics[width=\textwidth]{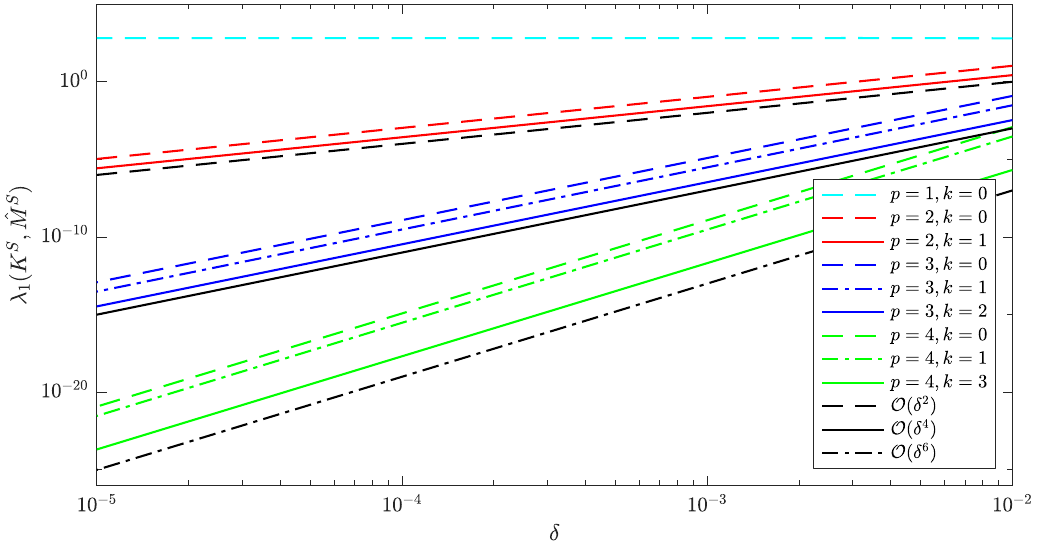}
        \end{subfigure}
        \hfill
        \begin{subfigure}[t]{0.32\textwidth}
            \centering
            \includegraphics[width=\textwidth]{Trimming2D_rotatedshiftedsquare_lambda1.pdf}
        \end{subfigure}
        \hfill
        \begin{subfigure}[t]{0.32\textwidth}
            \centering
            \includegraphics[width=\textwidth]{Trimming2D_rotatedshiftedsquare_min.pdf}
            \caption{Example~\ref{ex:shifted_rotated_square} of 2D trimming.}
        \end{subfigure}
        \hfill
        \begin{subfigure}[t]{0.32\textwidth}
            \centering
            \includegraphics[width=\textwidth]{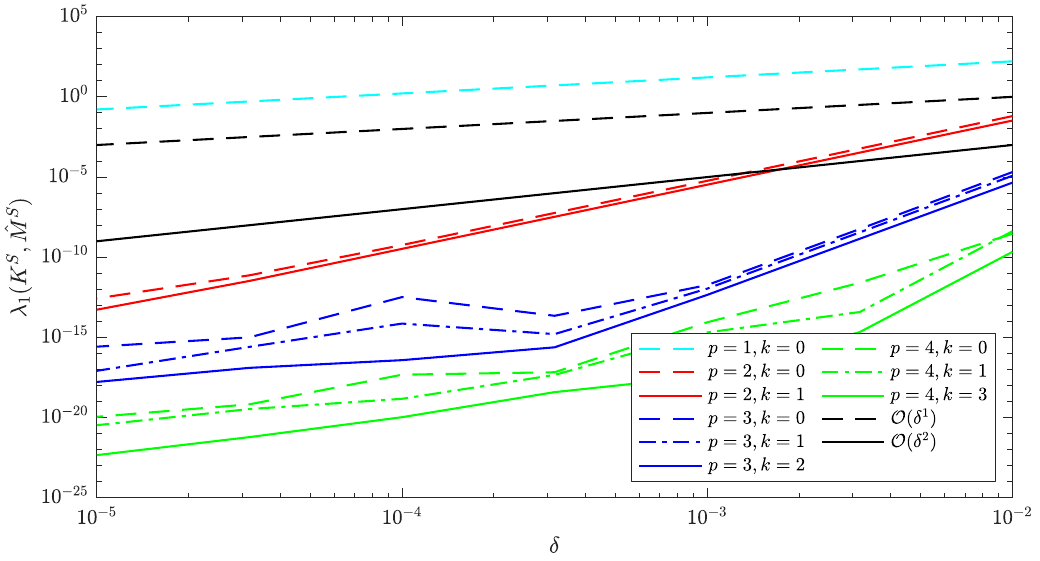}
        \end{subfigure}
        \hfill
        \begin{subfigure}[t]{0.32\textwidth}
            \centering
            \includegraphics[width=\textwidth]{Trimming2D_curved_lambda1.pdf}
        \end{subfigure}
        \hfill
        \begin{subfigure}[t]{0.32\textwidth}
            \centering
            \includegraphics[width=\textwidth]{Trimming2D_curved_min.pdf}
            \caption{Example~\ref{ex:curved} of 2D trimming.}
        \end{subfigure}
        \hfill
        \begin{subfigure}[t]{0.32\textwidth}
            \centering
            \includegraphics[width=\textwidth]{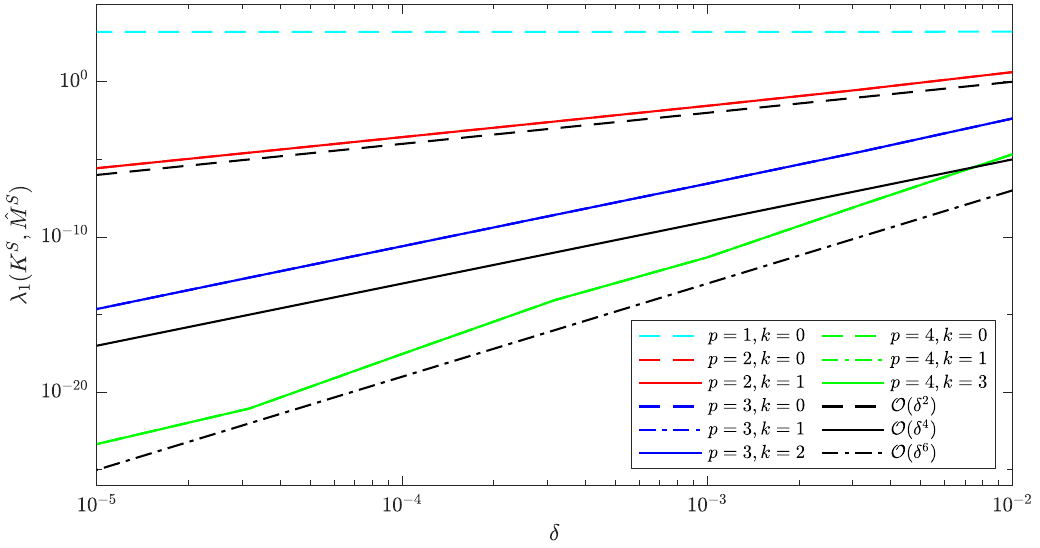}
        \end{subfigure}
        \hfill
        \caption{Comparison of the smallest eigenvalue (left column) with its original upper bound \eqref{eq:bound_1_laplace} (middle column) and its improved upper bound \eqref{eq: improved_bound} (right column) for all 1D and 2D examples.}
        \label{fig:improved_bound}
    \end{figure}
\end{toappendix}

\section*{CRediT authorship contribution statement}
\textbf{Ivan Bioli:} Conceptualization, Formal analysis, Investigation, Methodology, Software, Validation, Visualization, Writing – original draft, Writing – review \& editing.

\textbf{Yannis Voet:} Conceptualization, Formal analysis, Investigation, Methodology, Software, Validation, Visualization, Writing – original draft, Writing – review \& editing.

\section*{Declaration of competing interest}
The authors declare that they have no known competing financial interests or personal relationships that could have appeared to influence the work reported in this paper.

\section*{Data availability}
No data was used for the research described in the article.

\section*{Acknowledgments}
Ivan Bioli was partially supported by the EPFL - MNS - ``Numerical Modelling and Simulation'' group as a visiting ``Master's Valorisation''. This support is gratefully acknowledged.

Ivan Bioli was partially supported by the European Union’s Horizon Europe research and innovation programme under the Marie Skłodowska-Curie grant agreement No 101119556.

Both authors would like to express their gratitude to Giancarlo Sangalli and Espen Sande for carefully reading through this manuscript and to Stein Stoter for helpful discussions on the topic. Ivan Bioli also extends his thanks to Annalisa Buffa for her valuable support during his time at EPFL - MNS.

\bibliography{bibliography}

\end{document}